\newcommand{\R}{\mathbb R}
\newcommand{\C}{\mathbb C}
\newcommand{\N}{\mathbb N}
\newcommand{\Z}{\mathbb Z}
\newcommand{\abs}[1]{\left\vert #1 \right\vert}
\newcommand{\Di}{\mathcal{D}^{1,2}}
\renewcommand{\Re}{\mathop{\mathfrak{Re}}}
\newtheorem{Theorem}{Theorem}[section]
\newtheorem{Corollary}[Theorem]{Corollary}
\newtheorem{Lemma}[Theorem]{Lemma}
\newtheorem{Proposition}[Theorem]{Proposition}
 \theoremstyle{definition}
 \newtheorem{remark}[Theorem]{Remark}
\begin{document}

\title[Eigenvalue variation for Aharonov-Bohm moving pole]{On the
  leading term of the eigenvalue variation for Aharonov-Bohm operators
  with a moving pole}

\author{Laura Abatangelo, Veronica Felli}

\address{
\hbox{\parbox{5.7in}{\medskip\noindent
  L. Abatangelo, V. Felli\\
Dipartimento di Matematica e Applicazioni,\\
 Universit\`a di Milano Bicocca, \\
Via Cozzi 55, 20125 Milano (Italy). \\[2pt]
         {\em{E-mail addresses: }}{\tt laura.abatangelo@unimib.it, veronica.felli@unimib.it.}}}
}

\date{May 20, 2015}

\thanks{ 
The authors have been partially supported by 2014 INdAM-GNAMPA research project ``Stabilit\`{a}
spettrale e analisi asintotica per problemi singolarmente
perturbati'', 
2015 INdAM-GNAMPA research project ``Operatori di Schr\"odinger con potenziali elettromagnetici singolari: stabilit\`{a} spettrale e stime di decadimento'',
and by ERC Advanced Grant 2013 
``Complex Patterns for Strongly Interacting Dynamical Systems - COMPAT''.\\
\indent 2010 {\it Mathematics Subject Classification.} 
35J10, 35P20, 35Q40, 35Q60, 35J75.\\
  \indent {\it Keywords.} Magnetic Schr\"{o}dinger operators,
  Aharonov-Bohm potential, asymptotics of eigenvalues, blow-up analysis.
}

\begin{abstract}
 We study the behavior of eigenvalues for magnetic
  Aharonov-Bohm operators with half-integer circulation and Dirichlet
  boundary conditions in a planar domain. 
We analyse the leading term in the Taylor
expansion of the eigenvalue function as the pole moves in the interior of
  the domain, proving that it is a harmonic homogeneous polynomial and detecting its exact coefficients.
\end{abstract}

\maketitle

\section{Introduction}

Completing the analysis performed in \cite{AF}, we deepen the
investigation of the behavior of eigenvalues for the magnetic
Aharonov-Bohm operator with half-integer circulation
 and Dirichlet
boundary conditions in a planar domain.  We refer to \cite{AF},
\cite{BNNNT},  \cite{NNT}, and \cite{NT} 
for motivations and references to previous related
literature.

For every $a=(a_1,a_2)\in\R^2$, the Aharonov-Bohm vector potential
with pole $a$ and circulation $1/2$ is defined as 
\[
A_a(x_1,x_2)=\frac12\bigg(\frac{-(x_2-a_2)}{(x_1-a_1)^2+(x_2-a_2)^2},
\frac{x_1-a_1}{(x_1-a_1)^2+(x_2-a_2)^2}\bigg),\quad 
(x_1,x_2)\in\R^2\setminus\{a\}.
\]
Let $\Omega\subset\R^2$ be a bounded, open and simply connected
domain containing the origin. For every $a\in\Omega$, we introduce the functional space
$H^{1 ,a}(\Omega,\C)$ as the completion of
\[
\{u\in
H^1(\Omega,\C)\cap C^\infty(\Omega,\C):u\text{ vanishes in a
  neighborhood of }a\}
\]
 with respect to the norm 
 $$
 \|u\|_{H^{1,a}(\Omega,\C)}=\left(\left\|\nabla u\right\|^2
   _{L^2(\Omega,\C^2)} +\|u\|^2_{L^2(\Omega,\C)}+\big\|\tfrac{u}{|x-a|}
   \big\|^2_{L^2(\Omega,\C)}\right)^{\!\!1/2},
$$
which, in view of the
Hardy type inequality proved in \cite{LW99} (see also \cite[Lemma 3.1
and Remark 3.2]{FFT}), is 
equivalent to the norm 
\begin{equation*}
  \left(\left\|(i\nabla+A_{a}) u\right\|^2
    _{L^2(\Omega,\C^2)} +\|u\|^2_{L^2(\Omega,\C)}\right)^{\!\!1/2}.
\end{equation*}
We denote as $H^{1 ,a}_{0}(\Omega,\C)$ the space obtained as the completion
of $C^\infty_{\rm c}(\Omega\setminus\{a\},\C)$ with respect to the
norm $\|\cdot\|_{H^{1}_{a}(\Omega,\C)}$.

For every $a\in\Omega$,  we say that $\lambda\in\R$ is an eigenvalue
of the 
problem 
\begin{equation}\label{eq:eige_equation_a}\tag{$E_a$}
  \begin{cases}
   (i\nabla + A_{a})^2 u = \lambda u,  &\text{in }\Omega,\\
   u = 0, &\text{on }\partial \Omega,
 \end{cases}
\end{equation}
in a weak sense if there exists $u\in
H^{1,a}_{0}(\Omega,\C)\setminus\{0\}$ (called eigenfunction) such that
\[
\int_\Omega (i\nabla u+A_{a} u)\cdot \overline{(i\nabla v+A_{a}
  v)}\,dx=\lambda\int_\Omega u\overline{ v}\,dx \quad\text{for all }v\in H^{1,a}_{0}(\Omega,\C).
\]
From classical spectral theory, the eigenvalue problem $(E_a)$ admits
a sequence of real diverging eigenvalues $\{\lambda_k^a\}_{k\geq 1}$
with finite multiplicity; in the enumeration
$\lambda_1^a \leq \lambda_2^a\leq\dots\leq \lambda_j^a\leq\dots$
each eigenvalue is repeated as many times as its multiplicity.  We are
interested in the  behavior of the function $a\mapsto \lambda_j^a$ in a
neighborhood of a fixed point  $b\in \Omega$; without loss of
generality, we can consider $b=0\in\Omega$. 

Let us assume that there exists $n_0\geq 1$ such that 
\begin{equation}\label{eq:1}
  \lambda_{n_0}^0\quad\text{is simple},
\end{equation}
and denote
\[
\lambda_0= \lambda_{n_0}^0
\]
and, for any $a\in\Omega$,
\[
 \lambda_a= \lambda_{n_0}^a.
 \]
 In \cite[Theorem 1.3]{lena} it is proved
that, for all $j\geq1$ such that assumption \eqref{eq:1} holds true, 
the function $a\mapsto \lambda_j^a$ is analytic in a neighborhood of
$0$. In particular $a\mapsto \lambda_j^a$ is continuous and, if $a\to 0$, then
\begin{equation}\label{eq:conv_auto}
  \lambda_a\to \lambda_0.
\end{equation}
Let $\varphi_0\in H^{1,0}_{0}(\Omega,\C)\setminus\{0\}$ be a
$L^2(\Omega,\C)$-normalized
eigenfunction of problem $(E_0)$ associated to the eigenvalue
$\lambda_0= \lambda_{n_0}^0$, i.e. satisfying
\begin{equation}\label{eq:equation_lambda0}
 \begin{cases}
   (i\nabla + A_0)^2 \varphi_0 = \lambda_0 \varphi_0,  &\text{in }\Omega,\\
   \varphi_0 = 0, &\text{on }\partial \Omega,\\
\int_\Omega |\varphi_0(x)|^2\,dx=1.
 \end{cases}
\end{equation}
From \cite[Theorem 1.3]{FFT} (see also  \cite[Theorem 1.5]{NT} and
\cite[Proposition 2.1]{AF}) it is known that
\begin{equation}\label{eq:37}
  \varphi_0 \text{ has at $0$ a zero
    or order $\frac k2$ for some odd $k\in \N$},
\end{equation}
and there exist $\beta_1,\beta_2\in\C$ such that
$(\beta_1,\beta_2)\neq(0,0)$ and
\begin{equation}\label{eq:131}
  r^{-k/2} \varphi_0(r(\cos t,\sin t)) \to 
  \beta_1
  \frac{e^{i\frac t2}}{\sqrt{\pi}}\cos\Big(\frac k2
  t\Big)+\beta_2 
  \frac{e^{i\frac t2}}{\sqrt{\pi}}\sin\Big(\frac k2
  t\Big) \quad \text{in }C^{1,\tau}([0,2\pi],\C)
\end{equation}
as $r\to0^+$ for any $\tau\in (0,1)$.
We recall that, by \cite{HHOO99} (see also \cite[Lemma 2.3]{BNNNT}),
the function $e^{-i\frac t2}\varphi_0(r(\cos t,\sin t))$
is a multiple of a real-valued function and therefore 
either
$\beta_1=0$ or
$\tfrac{\beta_2}{\beta_1}$
is real.
Then $\varphi_0$ has exactly $k$ nodal lines meeting
at $0$ and dividing the whole angle into $k$ equal parts; such nodal
lines are tangent to the $k$ half-lines $\big\{\big(t,\tan
(\alpha_0+j\frac{2\pi}k) t \big):\,t>0 \big\}$, $j=0,1,\dots,k-1$, where
\begin{equation}\label{eq:alpha0}
\alpha_0=
\begin{cases}
  \frac2k\mathop{\rm arccot}\big(-\frac{\beta_2}{\beta_1}\big),&\text{if
  }\beta_1\neq0,\\
0,&\text{if
  }\beta_1=0.
\end{cases}
\end{equation}
At a deeper study, the rate of convergence of $\lambda_a$ to $\lambda_0$
is strictly related to the number of nodal lines of $\varphi_0$ ending at $0$.
First results in this direction are proved in \cite{BNNNT}, in which the authors provide 
some estimates for the rate of convergence \eqref{eq:conv_auto}. 
A significant improvement of these studies is obtained in \cite{AF}, where 
sharp
asymptotic behavior of eigenvalues is provided as the pole is approaching an internal zero of an eigenfunction $\varphi_0$ of the limiting
problem \eqref{eq:equation_lambda0} along the half-line tangent to any
nodal line of $\varphi_0$; more precisely, in \cite[Theorem 1.2]{AF}
it is proved that, under assumptions \eqref{eq:1} and \eqref{eq:37}, the limit 
\begin{equation}\label{eq:limlavve}
\lim_{|a|\to 0^+}\frac{\lambda_0-\lambda_a}{|a|^{k}} \text{ is finite
  and strictly positive as $a\to0$ tangentially to a nodal line}.
\end{equation}
More precisely, the above positive limit can be expressed in terms of the value
${\mathfrak m}_k$ defined as follows. 
Let $s_0$ be the positive half-axis
$s_0\!=\![0,+\infty)\!\times\!\{0\}$.  For every odd
natural number $k$, 
the function
\begin{equation}\label{eq:psi_k}
  \psi_k(r\cos t,r\sin t)= r^{k/2} \sin
  \bigg(\frac{k}{2}\,t\bigg),\quad 
  r\geq0,\quad t\in[0,2\pi],
\end{equation}
is the unique (up to a multiplicative
constant) function which is harmonic on $\R^2\setminus s_0$,
homogeneous of degree $k/2$ and vanishing on $s_0$. 
Let $s:=\{(x_1,x_2)\in\R^2: x_2=0\text{ and }x_1\geq 1\}$,
$\R^2_+=\{(x_1,x_2)\in\R^2:x_2>0)\}$, and denote as $\Di_s(\R^2_+)$
the completion of $C^\infty_{\rm c}(\overline{\R^2_+} \setminus s)$
under the norm $( \int_{\R^2_+} |\nabla u|^2\,dx )^{1/2}$. By
standard minimization methods, the functional
\begin{equation*}
 J_k: \Di_{s}(\R^2_+)\to\R,\quad 
J_k(u) = \frac12 \int_{\R^2_+} |\nabla u(x)|^2 \,dx-
 \int_{ \partial
 \R^2_+\setminus s} u(x_1,0)\frac{\partial \psi_k}{\partial x_2}(x_1,0)\,dx_1,
\end{equation*}
achieves its minimum over the
whole space $\Di_{s}(\R^2_+)$ at some function $w_k\in
\Di_{s}(\R^2_+)$, i.e.  there exists $w_k\in \Di_{s}(\R^2_+)$ such
that
\begin{equation}\label{eq:Ik}
{\mathfrak m}_k=\min_{u\in \Di_{s}(\R^2_+)}J_k(u)=J_k(w_k).
\end{equation}
We notice that 
\begin{equation}\label{eq:segno_mk}
{\mathfrak m}_k=J_k(w_k)=-\frac12  \int_{\R^2_+} |\nabla
w_k(x)|^2 \,dx=-\frac12 \int_0^1 \dfrac{\partial_+ \psi_k}{\partial x_2}(x_1,0)\,w_k (x_1,0)\,dx_1  
<0,
\end{equation}
where, for all $x_1>0$, $\frac{\partial_+ \psi_k}{\partial
  x_2}(x_1,0)=\lim_{t\to0^+}\frac{\psi_k(x_1,t)-\psi_k(x_1,0)}{t}=\frac
k2 x_1^{\frac k2-1}$.
In \cite{AF} it is proved that the limit in \eqref{eq:limlavve} is
equal to $-\frac4\pi(|\beta_1|^2+|\beta_2|^2)\,{\mathfrak m}_k$ 
with $(\beta_1,\beta_2)\neq(0,0)$ being as in \eqref{eq:131}.

From \cite[Theorem 1.2]{AF} we can easily deduce that, under assumptions \eqref{eq:1} and \eqref{eq:37}, the Taylor polynomials of the function
$a\mapsto \lambda_0-\lambda_a$ with center $0$ and degree strictly
smaller than $k$ vanish.
\begin{Lemma}\label{l:taylor}
  Let
$\Omega\subset\R^2$ be a bounded, open and simply connected
domain such that $0\in\Omega$ and let $n_0\geq 1$ be such that 
the $n_0$-th eigenvalue $\lambda_0=\lambda_{n_0}^0$ of $(i\nabla +
A_0)^2$ on $\Omega$ is simple  with associated eigenfunctions having  in
  $0$ a zero of order $k/2$ with $k\in\N$ odd. For $a\in\Omega$ let $\lambda_a=\lambda_{n_0}^a$ be
  the $n_0$-th eigenvalue of $(i\nabla +
A_a)^2$ on $\Omega$. 
Then
\begin{equation}\label{eq:tay}
 \lambda_0-\lambda_a=P(a)+o(|a|^k),\quad\text{as }|a|\to0^+,
\end{equation}
for some homogeneous polynomial $P\not\equiv 0$ of degree $k$
\begin{equation}\label{eq:polinomio}
P(a)=P(a_1,a_2)=\sum_{j=0}^k c_j a_1^{k-j}
a_2^j.
\end{equation}
\end{Lemma}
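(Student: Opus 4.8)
The plan is to combine the real-analyticity of $a\mapsto\lambda_a$ near the origin, recalled above from \cite[Theorem 1.3]{lena}, with the sharp directional asymptotics \eqref{eq:limlavve} established in \cite[Theorem 1.2]{AF}. Since $a\mapsto\lambda_a$ is real-analytic in a neighborhood of $0$, one may write
\[
\lambda_0-\lambda_a=\sum_{m=1}^{k-1}P_m(a)+P_k(a)+o(|a|^k)\qquad\text{as }|a|\to0^+,
\]
where $P_m$ is the homogeneous polynomial of degree $m$ collecting the order-$m$ terms of the Taylor expansion at $0$ of the real-valued function $a\mapsto\lambda_0-\lambda_a$ (its constant term being $\lambda_0-\lambda_0=0$); in particular each $P_m$ has real coefficients. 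Thus \eqref{eq:tay}--\eqref{eq:polinomio} will follow once one shows that $P_m\equiv0$ for every $1\le m\le k-1$ and that $P_k\not\equiv0$, taking $P:=P_k$, which is of the form \eqref{eq:polinomio} as the generic homogeneous polynomial of degree $k$ in $(a_1,a_2)$.

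First I would read \eqref{eq:limlavve} direction by direction. Let $\omega_j=(\cos\theta_j,\sin\theta_j)$, $\theta_j=\alpha_0+j\,\tfrac{2\pi}{k}$, $j=0,1,\dots,k-1$, denote the directions of the $k$ half-lines tangent at $0$ to the nodal lines of $\varphi_0$ (cf. \eqref{eq:alpha0}). Restricting the expansion above to $a=t\,\omega_j$, $t\to0^+$, and dividing by $t^k$ gives
\[
\frac{\lambda_0-\lambda_{t\omega_j}}{t^k}=\sum_{m=1}^{k-1}t^{\,m-k}P_m(\omega_j)+P_k(\omega_j)+o(1);
\]
by \eqref{eq:limlavve} the left-hand side tends to the finite number $L:=-\tfrac4\pi\big(|\beta_1|^2+|\beta_2|^2\big){\mathfrak m}_k$, and since the exponents $m-k$ with $1\le m\le k-1$ are negative and pairwise distinct this forces $P_m(\omega_j)=0$ for all $1\le m\le k-1$, together with $P_k(\omega_j)=L$. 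As $j$ is arbitrary, each $P_m$ with $m<k$ vanishes at the $k$ points $\omega_0,\dots,\omega_{k-1}$, while $P_k$ equals there the constant $L$, which is strictly positive because ${\mathfrak m}_k<0$ by \eqref{eq:segno_mk} and $(\beta_1,\beta_2)\neq(0,0)$; in particular $P_k\not\equiv0$.

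It remains to upgrade ``$P_m(\omega_j)=0$ for $j=0,\dots,k-1$'' into ``$P_m\equiv0$'' for $1\le m\le k-1$, which is the heart of the matter. By homogeneity $P_m$ vanishes on the whole line through the origin spanned by each $\omega_j$; and since $k$ is odd, $\theta_j\equiv\theta_{j'}\pmod\pi$ forces $j=j'$, so these are $k$ pairwise distinct lines. On the other hand, a homogeneous polynomial vanishing on a line through the origin is divisible by a real linear form defining that line, so $P_m$ --- a real homogeneous polynomial of degree $m\le k-1$ vanishing on $k$ distinct such lines --- would be divisible by a product of $k$ pairwise non-proportional linear forms, i.e. by a polynomial of degree $k>m$, which is absurd unless $P_m\equiv0$. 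Hence $P_m\equiv0$ for every $1\le m\le k-1$, and combined with $P_k\not\equiv0$ this proves \eqref{eq:tay}--\eqref{eq:polinomio}.

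The delicate point is the last step: the crude bound $\lambda_0-\lambda_a=o(|a|^{k-1})$ along each nodal ray, which by itself already follows from \eqref{eq:limlavve}, only kills the coefficients $P_m(\omega_j)$, $m<k$, one direction at a time, and to deduce the identical vanishing of the low-order homogeneous parts one genuinely uses that the $k$ nodal rays span $k$ \emph{distinct} lines --- precisely where the oddness of $k$ (the same oddness producing the odd number of equally spaced nodal lines of $\varphi_0$) is needed. The sign of ${\mathfrak m}_k$ in \eqref{eq:segno_mk} then yields $P_k\not\equiv0$ immediately, and everything else is routine manipulation of Taylor expansions.
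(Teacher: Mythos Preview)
Your argument is correct and follows essentially the same approach as the paper: Taylor-expand using the smoothness (the paper cites $C^\infty$ from \cite{BNNNT}, you cite analyticity from \cite{lena}---either suffices), then use \eqref{eq:limlavve} to kill $P_m(\omega_j)$ for $m<k$ and to show $P_k(\omega_j)>0$. The only cosmetic difference is in how you pass from ``$P_m$ vanishes at $k$ unit vectors'' to ``$P_m\equiv0$'': the paper dehomogenizes to the one-variable polynomial $\tilde Q(t)=\sum_j c_j t^{m-j}$ and counts $k$ distinct roots $\tan\theta_j$ (their Lemma~\ref{l:polinf}), while you factor out $k$ non-proportional linear forms directly---these are the same argument, and both hinge on the oddness of $k$ to get $k$ distinct lines. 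Your proof is also slightly more explicit than the paper's in spelling out why $P_k\not\equiv0$.
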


 The aim of the present paper is to detect the exact value
of all coefficients of the polynomial $P$ (and hence the sharp
asymptotic behavior of $\lambda_a-\lambda_0$  
as $a\to0$ along any direction, see Figure \ref{fig:1}).

\begin{figure}\centering
\begin{tikzpicture}[scale=1.2]
    \draw[line width=1.5pt] (0,0) to [out=0, in=-120] (1.732,1);
    \draw[line width=1.5pt] (0,0) to [out=120, in=0] (-1.732,1);
    \draw[line width=1.5pt] (0,0) to [out=240, in=120] (0,-2);
\fill (0,0) circle (2pt) node[above] {$0$};
\draw[dotted,line width=1pt]  (0,0) -- (2,1);
\draw[dotted,line width=1pt]  (0,0) -- (2,0);
\draw[line width=2pt,->] (1.2,0.6) -- (0.6,0.3);
\draw[dotted, line width=1pt,-] (1.341,0) to [out=90, in=-50] (1.2,0.6);
\node at (1.55,0.2) {${\alpha}$};
\fill (1.2,0.6) circle (2pt) node[above] {$a$};
\node at (2.1,1) {${\mathfrak r}$};
                              \end{tikzpicture}
\caption{$a=|a|(\cos\alpha,\sin\alpha)$ approaches $0$ along the
  direction determined by the angle $\alpha$.}
\label{fig:1}
\end{figure}
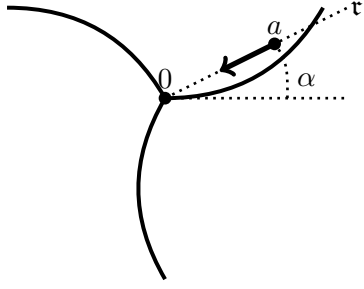

\begin{Theorem}\label{t:main}
Let
$\Omega\subset\R^2$ be a bounded, open and simply connected
domain such that $0\in\Omega$ and let $n_0\geq 1$ be such that 
the $n_0$-th eigenvalue $\lambda_0=\lambda_{n_0}^0$ of $(i\nabla +
A_0)^2$ on $\Omega$ is simple  with associated eigenfunctions having  in
  $0$ a zero of order $k/2$ with $k\in\N$ odd. For $a\in\Omega$ let $\lambda_a=\lambda_{n_0}^a$ be
  the $n_0$-th eigenvalue of $(i\nabla +
A_a)^2$ on $\Omega$. 
Let $\alpha\in[0,2\pi)$.
Then  
\[
\frac{\lambda_0-\lambda_a}{|a|^k}\to C_0
\cos\big(k(\alpha-\alpha_0)\big) 
 \qquad \text{as $a\to0$ with $a=|a|(\cos\alpha,\sin\alpha)$,}
\]
where $\alpha_0$ is 
 as in \eqref{eq:alpha0} and 
\[
C_0=-4\frac{|\beta_1|^2+|\beta_2|^2}{\pi}{\mathfrak m}_k,
\]
with
$(\beta_1,\beta_2)\neq(0,0)$ as in \eqref{eq:131} and ${\mathfrak
  m}_k$ as in \eqref{eq:Ik}-\eqref{eq:segno_mk}.
\end{Theorem}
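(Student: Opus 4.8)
The plan is to reduce the theorem, via Lemma~\ref{l:taylor}, to the determination of the polynomial $P$. By that lemma $\lambda_0-\lambda_a=P(a)+o(|a|^k)$ with $P\not\equiv0$ homogeneous of degree $k$, so for $a=|a|(\cos\alpha,\sin\alpha)$ the limit $\ell(\alpha):=\lim_{|a|\to0^+}|a|^{-k}(\lambda_0-\lambda_a)$ exists and equals $P(\cos\alpha,\sin\alpha)$; the claim is $\ell(\alpha)=C_0\cos(k(\alpha-\alpha_0))$. A first, purely algebraic observation pins $P$ down up to one parameter: by \cite[Theorem~1.2]{AF} together with \eqref{eq:alpha0}, $\ell(\alpha)=C_0$ at each of the $k$ nodal directions $\alpha_0+\tfrac{2\pi j}{k}$, so $R:=P-C_0\,\Re\!\big[e^{-ik\alpha_0}(a_1+ia_2)^k\big]$ is a degree-$k$ homogeneous polynomial vanishing on those $k$ rays; since $k$ is odd, $R$ is odd and hence vanishes on the $k$ opposite rays too, i.e.\ on $k$ distinct lines through the origin. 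Such a polynomial is a constant multiple of the product of the corresponding linear forms, which the product-to-sine identity (valid precisely since $k$ is odd) identifies, up to a constant, with $\Im\!\big[e^{-ik\alpha_0}(a_1+ia_2)^k\big]$. Thus $P(a)=C_0\,\Re[e^{-ik\alpha_0}(a_1+ia_2)^k]+c\,\Im[e^{-ik\alpha_0}(a_1+ia_2)^k]$ for some $c\in\R$; in particular $P$ is already a harmonic homogeneous polynomial, and it remains to prove $c=0$ — equivalently, to compute $\ell$ along one \emph{generic} direction, since $\sin(k(\alpha-\alpha_0))$ vanishes at every nodal and ``anti-nodal'' direction and so those do not detect $c$.

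This requires a blow-up analysis at scale $|a|$, extending \cite{AF} to a pole moving along an arbitrary direction. Following the scheme of \cite{BNNNT,AF}, I would fix an $L^2$-normalized eigenfunction $u_a$ of $(E_a)$ at $\lambda_a$ with $u_a\to\varphi_0$, pass to the de-gauged real picture $u_a=e^{\frac i2\arg(\cdot-a)}v_a$, $\varphi_0=e^{\frac i2\arg(\cdot)}v_0$ — so that $v_0,v_a$ solve real eigenvalue problems with a sign-change condition across a slit issuing from $0$, resp.\ from $a$ — test the equations for $v_a$ and $v_0$ against each other so that the bulk Dirichlet integrals cancel and $\lambda_0-\lambda_a$ is expressed through terms supported, at scale $|a|$, near the origin, and then rescale $y=x/|a|$. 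By the Almgren-type monotonicity formula and elliptic estimates one obtains a limit profile $V$, harmonic off the rescaled slit (a half-line from $p_\alpha:=(\cos\alpha,\sin\alpha)$ to infinity), antisymmetric across it, with the $|y-p_\alpha|^{1/2}$ behaviour at $p_\alpha$, and asymptotic at infinity, with amplitude $\pi^{-1/2}(|\beta_1|^2+|\beta_2|^2)^{1/2}$, to the rotation $\Psi_{\alpha_0}(r\cos t,r\sin t)=r^{k/2}\sin(\tfrac k2(t-\alpha_0))$ of $\psi_k$ dictated by \eqref{eq:131}; moreover $\ell(\alpha)$ equals an explicit quadratic energy/flux functional of the correction $V-\Psi_{\alpha_0}$.

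To evaluate it, in the frame aligned with $p_\alpha$ I would write $\Psi_{\alpha_0}=\cos(\tfrac k2(\alpha-\alpha_0))\,\psi_k+\sin(\tfrac k2(\alpha-\alpha_0))\,\widetilde\psi_k$, with $\widetilde\psi_k(r\cos t,r\sin t)=r^{k/2}\cos(\tfrac k2 t)$, and split the correction by linearity. The $\psi_k$-component reproduces exactly the half-plane minimization \eqref{eq:Ik}--\eqref{eq:segno_mk} (here $p_\alpha$ lies on the nodal line $s_0$ of $\psi_k$, which is the \cite{AF} configuration) and contributes $-\tfrac4\pi(|\beta_1|^2+|\beta_2|^2){\mathfrak m}_k\cos^2(\tfrac k2(\alpha-\alpha_0))$; the $\widetilde\psi_k$-component solves a companion slit-plane problem whose energy turns out to be the same quantity with the opposite sign and factor $\sin^2(\tfrac k2(\alpha-\alpha_0))$; and the cross term vanishes by the opposite parities of $\psi_k$ and $\widetilde\psi_k$ with respect to the line through $0$ and $p_\alpha$. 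Since $\cos^2-\sin^2=\cos$ of the doubled angle, these combine to $\ell(\alpha)=-\tfrac4\pi(|\beta_1|^2+|\beta_2|^2){\mathfrak m}_k\cos(k(\alpha-\alpha_0))=C_0\cos(k(\alpha-\alpha_0))$, forcing $c=0$ and proving the theorem. (Conceptually $c=0$ is a symmetry: once $\ell$ is known to depend only on the local data $\alpha_0$ and $|\beta_1|^2+|\beta_2|^2$, the behaviour of the Aharonov--Bohm operator under reflection across $\{\arg x=\alpha_0\}$ — which fixes $0$ and $\lambda_0$, mirrors $a$, and sends $\varphi_0$ to an eigenfunction with the same local data — shows that $\ell$ is even about $\alpha_0$, whereas $\sin(k(\alpha-\alpha_0))$ is odd.)

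The step I expect to be the main obstacle is exactly this last identification of the coefficient. Away from the nodal directions of \cite{AF} the pole is no longer on a symmetry axis of the blow-up profile, so the reflection that reduces the limit model to the half-plane $\R^2_+$ is unavailable; one must work with the genuinely two-dimensional slit-plane problems, establish their well-posedness, uniqueness and decay, prove the above splitting, and — the delicate point — pin down the sign and magnitude of the companion energy together with the vanishing of the cross term. Controlling the blow-up convergence uniformly in $\alpha$, in particular near the moving slit endpoint $p_\alpha$ and near the origin, where $\Psi_{\alpha_0}$ is only Hölder continuous, is the other technical core.
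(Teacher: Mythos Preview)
Your algebraic reduction is cleaner than the paper's: from the nodal-direction values supplied by \cite{AF} you get $P=C_0\,\Re[e^{-ik\alpha_0}(a_1+ia_2)^k]+c\,\Im[e^{-ik\alpha_0}(a_1+ia_2)^k]$ directly, whereas the paper first establishes separately the $\tfrac{2\pi}{k}$-periodicity of $g(\alpha)=P(\cos\alpha,\sin\alpha)$ via a rotation invariance of the blow-up profile, then factorises $g$ through its zeros, and finally invokes the product-to-cosine identity. Your observation that a homogeneous degree-$k$ polynomial vanishing on $k$ distinct lines is a scalar multiple of the product of the linear forms, which in turn equals (up to a constant) the imaginary part, packages all of this into one step.

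The genuine gap is in your primary route to $c=0$. The claim that the $\tilde\psi_k$-companion slit-plane problem produces exactly $-C_0$ is not justified: unlike the $\psi_k$-problem, here the asymptotic datum does \emph{not} vanish on the slit, so no even/odd reflection reduces it to the half-plane minimisation \eqref{eq:Ik}--\eqref{eq:segno_mk}, and there is no a priori reason the two energies should have the same magnitude. You flag this yourself as ``the delicate point'', but it is in fact the whole content of the theorem once your algebraic reduction is in place. The cross-term vanishing by parity is fine, but the diagonal $\tilde\psi_k$-term is left as an assertion.

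Your parenthetical route is the correct one, and it is exactly what the paper does. The paper shows (Proposition~\ref{p:primopasso}) that $\ell(\alpha)$ depends only on the blow-up profile $\Psi_{\mathbf p}$ of Proposition~\ref{prop_Psi}, hence only on the local data $(\alpha-\alpha_0,|\beta_1|^2+|\beta_2|^2)$; then it proves (Lemma~\ref{l:relazionePsi}, Lemma~\ref{l:invarianzaxi}) that the relevant Fourier coefficient $\xi_{\mathbf p}(1)$ is invariant under the reflection $\mathcal R_2$ across the nodal tangent, giving $g(\alpha)=g(-\alpha)$ (after normalising $\alpha_0=0$). With your reduction this single evenness kills the $\sin(k\alpha)$ term and forces $c=0$. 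So: promote your reflection argument from a conceptual aside to the actual proof, drop the unproven companion-energy computation, and you have a complete argument that is in fact somewhat shorter than the paper's, since your algebraic step makes the rotation invariance $\mathcal R_1$ redundant.
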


\begin{remark}
  By Theorem \ref{t:main} it follows that the polynomial 
  \eqref{eq:polinomio} of Lemma \ref{l:taylor} is  given by
 \[
  P(|a|(\cos\alpha,\sin\alpha))=C_0|a|^k\cos(k(\alpha-\alpha_0)).
 \]
Hence 
\[
P(a_1,a_2)=C_0\Re\big(e^{-ik\alpha_0}(a_1+i\,a_2)^k\big),
\]
thus yielding $\Delta P=0$, i.e. the polynomial $P$ in
\eqref{eq:tay}-\eqref{eq:polinomio} is \emph{harmonic}.
\end{remark}

The proof of Theorem \ref{t:main} is based on a combination
of estimates from
above and below of the Rayleigh quotient associated to the eigenvalue
problem with a fine blow-up analysis for scaled
eigenfunctions 
\[
\dfrac{\varphi_a(|a|x)}{|a|^{k/2}}
\]
which gives a sharp characterization of  upper
and lower bounds for eigenvalues, as already performed in \cite{AF}. 
Nevertheless, differently from \cite{AF}, in the general case of poles moving along
any direction, we cannot explicitly construct the
limit profile of the above blow-up sequence. Such a difficulty
is overcome studying the dependence of the limit profile on the
position of the pole and the symmetry/periodicity  properties of its Fourier
coefficient with respect to a basis of eigenvectors of an associated
angular problem: such symmetry and periodicity turn into some  
 symmetry and periodicity invariances of the polynomial $P$. A
 complete classification of homogeneous $k$-degree polynomials with
 such periodicity/symmetry invariances allows us to determine
 explicitly the polynomial $P$ thus concluding.

The paper is organized as follows.  Section \ref{sec:preliminaries} is
devoted to recall some known facts and introduce some notation.  In
section \ref{sec:sharp-asympt-lambd} we prove sharp asymptotics for
$\lambda_0-\lambda_a$ in dependence of the angle $\alpha$.  In section
\ref{sec:properties-falpha} we describe some symmetry properties of
the sharp asymptotics, which allow us to prove Theorem \ref{t:main} in
section \ref{sec:proof-main-result}.

\section{Preliminaries}\label{sec:preliminaries}

In this  section we present some preliminaries as
needed in the forthcoming argument.

\subsection{Change of coordinates}
 As already highlighted in \cite[Remark 2.2]{AF}, up to a change
 of coordinates (a rotation), it is not restrictive to assume in 
 \eqref{eq:131} that 
 \begin{equation}\label{eq:54}
  \beta_1=0.
\end{equation}
Under condition \eqref{eq:54}, we have that $\alpha_0=0$ and one 
nodal line of $\varphi_0$ is tangent the $x_1$-axis.

\subsection{Polar eigenfunctions} 
The limit function in \eqref{eq:131} is an eigenfunction of the operator
\begin{equation*}
\mathfrak L\psi=
-\psi''+i\psi'+\frac14\psi
\end{equation*}
 acting on
$2\pi$-periodic functions.
The eigenvalues of $\mathfrak L$ are
$\big\{\frac {j^2}4:j\in \N,\ j\text{ is odd}\big\}$; moreover each
eigenvalue $\frac{j^2}4$ has multiplicity $2$ and the functions
\begin{equation}\label{eq:9}
  \psi_1^j(t)=\frac{e^{i\frac t2}}{\sqrt{\pi}}\cos\Big(\frac j2
  t\Big),\quad 
  \psi_2^j(t)=\frac{e^{i\frac t2}}{\sqrt{\pi}}\sin\Big(\frac j2
  t\Big)
\end{equation} 
form an
$L^2((0,2\pi),\C)$-orthonormal basis of the eigenspace associated to
the eigenvalue $\frac{j^2}4$.

\subsection{Angles and approximating eigenfunctions}
As in \cite{BNNNT}, for every $\alpha\in[0,2\pi)$  and 
$b=(b_1,b_2)=|b|(\cos\alpha,\sin\alpha)\in \R^2\setminus\{0\}$, we define
\begin{equation}\label{eq:4th}
\theta_b:\R^2\setminus\{b\}\to [\alpha,\alpha+2\pi)
\quad\text{and}\quad
\theta_0^b:\R^2\setminus\{0\}\to [\alpha,\alpha+2\pi)
\end{equation}
such that 
\[
\theta_b(b+r(\cos t,\sin t))=t
\quad \text{and}\quad \theta_0^b(r(\cos t,\sin t))=t,
\quad\text{for all }r>0\text{ and }t\in  [\alpha,\alpha+2\pi). 
\]
E.g. 
if $b_1>0$ and $b_2>0$ the functions $\theta_b$ and $\theta_0^b$ are given by
\begin{align}\label{eq:theta}
&\theta_b(x_1,x_2)=
\begin{cases}
  \arctan\frac{x_2-b_2}{x_1-b_1},&\text{if }x_1>b_1,\ x_2\geq \frac{b_2}{b_1}x_1,\\ 
  \frac\pi2,&\text{if }x_1=b_1,\ x_2> b_2,\\ 
  \pi+\arctan\frac{x_2-b_2}{x_1-b_1},&\text{if }x_1<b_1,\\ 
  \frac32\pi,&\text{if }x_1=b_1,\ x_2< b_2,\\ 
  2\pi+\arctan\frac{x_2-b_2}{x_1-b_1},&\text{if }x_1>b_1,\ x_2< \frac{b_2}{b_1}x_1,
\end{cases}\\
\notag
&\theta_0^b(x_1,x_2)=
\begin{cases}
  \arctan\frac{x_2}{x_1},&\text{if }x_1>0,\ x_2\geq \frac{b_2}{b_1}x_1,\\ 
  \frac\pi2,&\text{if }x_1=0,\ x_2> 0,\\ 
  \pi+\arctan\frac{x_2}{x_1},&\text{if }x_1<0,\\ 
  \frac32\pi,&\text{if }x_1=0,\ x_2< 0,\\ 
  2\pi+\arctan\frac{x_2}{x_1},&\text{if }x_1>0,\ x_2< \frac{b_2}{b_1}x_1.
\end{cases}
\end{align}
We notice that $\theta_b$ and $\theta_0^b$ are regular except on the half-lines 
\begin{equation*}
 s_b:= \big\{tb:\,t\geq 1\big\}, \quad 
 s_0^b:=  \big\{t\,b:\,t\geq 0\big\},
\end{equation*}
respectively,
whereas the difference $\theta_0^b - \theta_b$ is regular except for the segment 
$\{ tb:\ t\in [0,1] \}$ from $0$ to $b$.

We also define
\[
\theta_0:\R^2\setminus\{0\}\to [0,2\pi)\]
 as 
\begin{equation}\label{eq:tildetheta}
\theta_0(x_1,x_2)=
\begin{cases}
  \arctan\frac{x_2}{x_1},&\text{if }x_1>0,\ x_2\geq 0,\\ 
  \frac\pi2,&\text{if }x_1=0,\ x_2> 0,\\ 
  \pi+\arctan\frac{x_2}{x_1},&\text{if }x_1<0,\\ 
  \frac32\pi,&\text{if }x_1=0,\ x_2< 0,\\ 
  2\pi+\arctan\frac{x_2}{x_1},&\text{if }x_1>0,\ x_2<0,\\ 
\end{cases}
\end{equation}
so that $\theta_0(\cos t,\sin t)=\theta_0^0(\cos t,\sin t) =t$ for all
$t\in[0,2\pi)$ and $\theta_0$ is regular except for the half-axis
$\{(x_1,0): x_1\geq 0\}$,
whereas the difference 
\begin{equation}\label{eq:diff}
(\theta_0^b - \theta_0) (r\cos t,r\sin t) = 
 \begin{cases}
  0, &\text{if }t\in [\alpha, 2\pi),\\
  2\pi, &\text{if } t\in[0,\alpha).
 \end{cases}
\end{equation}
Let us now consider a suitable family of eigenfunctions relative to
the approximating eigenvalue $\lambda_a$.
For all $a\in\Omega$, let $\varphi_a\in
H^{1,a}_{0}(\Omega,\C)\setminus\{0\}$ be an eigenfunction of problem
\eqref{eq:eige_equation_a} associated to the eigenvalue $\lambda_a$,
i.e. solving
\begin{equation}\label{eq:equation_a}
 \begin{cases}
   (i\nabla + A_a)^2 \varphi_a = \lambda_a \varphi_a,  &\text{in }\Omega,\\
   \varphi_a = 0, &\text{on }\partial \Omega,
 \end{cases}
\end{equation}
such that 
\begin{equation}\label{eq:6}
  \int_\Omega |\varphi_a(x)|^2\,dx=1 \quad\text{and}\quad 
  \int_\Omega e^{\frac i2(\theta_0^a-\theta_a)(x)}\varphi_a(x)\overline{\varphi_0(x)}\,dx\text{ is a
    positive real number},
\end{equation}
where $\varphi_0$ is as in \eqref{eq:equation_lambda0}.
From \eqref{eq:1}, \eqref{eq:equation_lambda0},
\eqref{eq:equation_a}, \eqref{eq:6}, and standard elliptic estimates,
it follows that $\varphi_a\to \varphi_0$ in $H^1(\Omega,\C)$ and in
$C^2_{\rm loc}(\Omega\setminus\{0\},\C)$
and 
\begin{equation}\label{eq:congrad2}
(i\nabla+A_a)\varphi_a\to (i\nabla+A_0)\varphi_0 \quad\text{in }L^2(\Omega,\C).
\end{equation}

\subsection{Limit profile in dependence on $\alpha$} 
A key role in the proof of our main result is played by
a suitable magnetic-harmonic function in $\R^2$, which will turn out
to be the limit of
blowed-up sequences of eigenfunctions with poles approaching $0$ along the half-line
starting from $0$ with slope $\tan \alpha$.

For every ${\mathbf p}\in\R^2$, we denote as ${\mathcal
  D}^{1,2}_{\mathbf p}(\R^2,\C)$ the completion of $C^\infty_{\rm
  c}(\R^N\setminus\{0\},\C)$ with respect to the magnetic Dirichlet
norm
\begin{displaymath}
  \|u\|_{{\mathcal D}^{1,2}_{\mathbf
      p}(\R^2,\C)}:=\bigg(\int_{\R^2}\big|(i\nabla +A_{\mathbf p})u(x)\big|^2\,dx\bigg)^{\!\!1/2}.
\end{displaymath}
We recall from \cite{LW99} that functions in ${\mathcal D}^{1,2}_{\mathbf p}(\R^2,\C)$
satisfy the Hardy type inequality 
\begin{equation*}
  \int_{\R^2} |(i\nabla+A_{\mathbf p})u|^2\,dx \geq \frac14
  \int_{\R^2}\frac{|u(x)|^2}{|x-{\mathbf p}|^2}\,dx;
\end{equation*}
furthermore  (see also \cite[Lemma 3.1 and Remark 3.2]{FFT}) the inequality
\begin{equation*} 
 \int_{D_r({\mathbf p})} |(i\nabla+A_{\mathbf p})u|^2\,dx \geq \frac14 \int_{D_r({\mathbf p})}\frac{|u(x)|^2}{|x-{\mathbf p}|^2}\,dx,
\end{equation*}
 holds for all $r>0$  and $u\in H^{1,{\mathbf p}}(D_r({\mathbf p}),\C)$, where 
$D_r({\mathbf p})$
  denotes the disk of center ${\mathbf p}$ and radius $r$.

\begin{Proposition}\label{prop_Psi}
Let $\alpha\in[0,2\pi)$ and ${\mathbf
  p}=(\cos\alpha,\sin\alpha)$. There exists a unique function
$\Psi_{\mathbf p}\in H^{1 ,{\mathbf p}}_{\rm loc}(\R^2,\C)$ such that 
\begin{equation}\label{eq:16}
(i\nabla +A_{\mathbf p})^2\Psi_{\mathbf p}=0\quad\text{ in $\R^2$ in a
  weak $H^{1 ,{\mathbf p}}$-sense},
\end{equation}
and 
\begin{equation}\label{eq:17}
\int_{\R^2\setminus D_r} \big|(i\nabla + A_{\mathbf p})(\Psi_{\mathbf p} -
e^{\frac i2(\theta_{\mathbf p}-\theta_0^{\mathbf p})} e^{\frac i2
  \theta_0}\psi_k )\big|^2\,dx < +\infty,
\quad\text{for any }r>1,
\end{equation}
where $D_r=D_r(0)$.
\end{Proposition}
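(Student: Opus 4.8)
The plan is to prove existence and uniqueness separately, in both cases reducing the problem to a standard variational minimization in a space with finite Dirichlet energy. Before doing so I would gauge-transform the problem to remove the singularity: since $\theta_{\mathbf p}$ and $\theta_0^{\mathbf p}$ differ only by a jump of $2\pi$ across the segment from $0$ to $\mathbf p$, the function $e^{\frac i2(\theta_{\mathbf p}-\theta_0^{\mathbf p})}$ is a locally smooth unimodular factor away from that segment, and $e^{\frac i2\theta_0^{\mathbf p}}$ is the canonical gauge trivializing $A_{\mathbf p}$ on $\R^2\setminus s_0^{\mathbf p}$ (recall $A_{\mathbf p}=\frac12\nabla\theta_{\mathbf p}$ away from $s_{\mathbf p}$). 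The upshot is that if one writes $\Psi_{\mathbf p}=e^{\frac i2(\theta_{\mathbf p}-\theta_0^{\mathbf p})}e^{\frac i2\theta_0}v$, then equation \eqref{eq:16} becomes the statement that $v$ is a (real, up to a constant) harmonic function on $\R^2\setminus s_0$ that is antisymmetric under the relevant deck transformation — exactly the setting of $\psi_k$ — while \eqref{eq:17} asks that $v-\psi_k$ have finite Dirichlet energy away from a ball. So the content is: construct the unique such $v$.

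First I would treat existence. Set $\tilde v = v-\psi_k$; I would look for $\tilde v$ in the space $\Di_{s_0}(\R^2)$ (the completion of $C^\infty_{\rm c}(\R^2\setminus s_0)$ under the Dirichlet norm, the natural two-sided analogue of the $\Di_s(\R^2_+)$ used in the Introduction for $J_k$), imposing the correct boundary behavior on $s_0$. Since $\psi_k$ is harmonic on $\R^2\setminus s_0$, homogeneous of degree $k/2$, and vanishes on $s_0$, the corrector $\tilde v$ should satisfy a linear problem: harmonic away from $s_0$, vanishing on $s_0$ in the trace sense coming from the two sides, with a prescribed normal-derivative datum on $s_0$ produced by $\psi_k$. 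This is solved by minimizing a quadratic functional of the form $\frac12\int_{\R^2}|\nabla u|^2 - \langle(\text{normal derivative of }\psi_k),u\rangle_{s_0}$ over $\Di_{s_0}(\R^2)$, coercive by the Hardy/Poincaré-type inequality on the slit plane that underlies the definition of $\Di_{s_0}$; this is precisely the two-dimensional companion of \eqref{eq:Ik}. The minimizer $w$ exists by the direct method (convexity + coercivity + weak lower semicontinuity), and $\tilde v := w$, hence $v:=\psi_k+w$, and finally $\Psi_{\mathbf p}:=e^{\frac i2(\theta_{\mathbf p}-\theta_0^{\mathbf p})}e^{\frac i2\theta_0}(\psi_k+w)$, which one checks solves \eqref{eq:16} weakly and satisfies \eqref{eq:17} by construction (since $w$ has globally finite Dirichlet energy and the unimodular gauge factors only contribute bounded multiplicative terms away from their jump segment, which lies in a ball). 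One must also verify that the gauge factor $e^{\frac i2\theta_0}$, which has a jump on $\{x_1\ge 0,x_2=0\}$, combines correctly with the jump of $e^{\frac i2(\theta_{\mathbf p}-\theta_0^{\mathbf p})}$ so that $\Psi_{\mathbf p}$ is genuinely single-valued in $H^{1,{\mathbf p}}_{\rm loc}$; this is a matter of matching the monodromies and is where \eqref{eq:diff} is used.

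For uniqueness, suppose $\Psi_1,\Psi_2$ both satisfy \eqref{eq:16}–\eqref{eq:17}; then $\Phi:=\Psi_1-\Psi_2$ is magnetic-harmonic on $\R^2$ and, subtracting the two instances of \eqref{eq:17}, $\int_{\R^2\setminus D_r}|(i\nabla+A_{\mathbf p})\Phi|^2<\infty$ for every $r>1$. Testing the equation against $\Phi$ times a suitable cut-off and combining with the Hardy inequality for ${\mathcal D}^{1,2}_{\mathbf p}$ recalled just before the Proposition, the near-pole behavior is controlled, so in fact $\Phi\in{\mathcal D}^{1,2}_{\mathbf p}(\R^2,\C)$; being magnetic-harmonic of finite magnetic Dirichlet energy on all of $\R^2$ forces $(i\nabla+A_{\mathbf p})\Phi\equiv 0$, hence (again by Hardy, which bounds $\int|\Phi|^2/|x-{\mathbf p}|^2$ by $0$) $\Phi\equiv 0$. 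Equivalently, after the gauge transformation this is the uniqueness of the minimizer of a strictly convex coercive functional, which is immediate. The main obstacle I anticipate is the bookkeeping around the multivalued gauge functions: making precise in what exact function space (and with what branch conventions and trace/antisymmetry conditions across $s_0$) the transformed problem lives, so that the transform is an isometric isomorphism matching \eqref{eq:16}–\eqref{eq:17} to the clean variational problem for $w$ — in particular checking that "vanishing on $s_0$'' for $\psi_k$ is reproduced by the correct one-sided traces of $\Phi$ and that the corrector's energy near $\partial D_1$ is harmless — rather than any analytic difficulty, which the Hardy inequality and the direct method dispatch routinely.
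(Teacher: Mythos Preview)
Your uniqueness argument is the same as the paper's: the difference of two solutions lies in $\mathcal{D}^{1,2}_{\mathbf p}(\R^2,\C)$ (via \cite[Proposition 4.3]{AF}) and is weakly $A_{\mathbf p}$-harmonic, hence zero.

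For existence, the paper takes a much more direct route that sidesteps the gauge bookkeeping entirely. It never transforms: it picks a smooth cutoff $\eta$ vanishing on $D_1$ and equal to $1$ outside some $D_R$, writes $\Psi_{\mathbf p}=g+\eta\,e^{\frac i2(\theta_{\mathbf p}-\theta_0^{\mathbf p})}e^{\frac i2\theta_0}\psi_k$, and obtains $g\in\mathcal{D}^{1,2}_{\mathbf p}(\R^2,\C)$ from a single application of Lax--Milgram, since the right-hand side $F=-(i\nabla+A_{\mathbf p})^2\big(\eta\,e^{\frac i2(\theta_{\mathbf p}-\theta_0^{\mathbf p})}e^{\frac i2\theta_0}\psi_k\big)$ is supported in the annulus where $\nabla\eta\neq 0$ and is manifestly in $(\mathcal{D}^{1,2}_{\mathbf p})^\star$. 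Because $\eta\equiv 0$ on $D_1$, the multivalued phases near the segment $[0,\mathbf p]$ and the pole $\mathbf p$ never enter the computation.

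Your gauge-transformed picture is correct \emph{outside} $D_1$, but the assertion that \eqref{eq:16} becomes globally ``$v$ harmonic on $\R^2\setminus s_0$, antisymmetric across $s_0$'' is not right for general $\alpha\neq 0$: the factor $e^{\frac i2(\theta_{\mathbf p}-\theta_0^{\mathbf p})}$ has a sign jump along the segment $[0,\mathbf p]$, which does \emph{not} lie on $s_0$, and the pole $\mathbf p$ sits off $s_0$ as well. Hence the transformed $v$ carries an extra jump across $[0,\mathbf p]$ and a square-root--type behavior at $\mathbf p$, so the corrector $\tilde v=v-\psi_k$ does not belong to $\Di_{s_0}(\R^2)$ and your minimization is posed in the wrong space. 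This is more than bookkeeping---the functional setting itself must change (to the double cover branched at $\mathbf p$, or to a space encoding both slits). It can be fixed, but the natural repair---gauge-transform only outside a ball and handle the interior separately---collapses to the paper's cutoff-plus-Lax--Milgram argument anyway. What the paper's route buys is that all the delicate phase matching is confined to a region where the cutoff vanishes; what yours would buy, once corrected, is a direct link to the functional $J_k$, but that is not needed for this Proposition.
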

\begin{proof}
Let $\eta $ be a smooth cut-off function such that $\eta \equiv 0$ in
$D_1$ and $\eta\equiv 1$ in $\R^2 \setminus D_R$ for some $R>1$. 
We observe that 
\begin{align*}
  F&=(\Delta \eta) e^{\frac i2(\theta_{\mathbf p}-\theta_0^{\mathbf p})}
  e^{\frac i2 \theta_0}\psi_k-2i\nabla\eta\cdot(i\nabla+A_{\mathbf
    p})\Big(e^{\frac i2(\theta_{\mathbf p}-\theta_0^{\mathbf p})} e^{\frac
    i2
    \theta_0}\psi_k\Big)\\
  &=-(i\nabla+A_{\mathbf p})^2\Big(\eta e^{\frac i2(\theta_{\mathbf
      p}-\theta_0^{\mathbf p})} e^{\frac i2 \theta_0}\psi_k\Big)\in
  \big({\mathcal D}^{1,2}_{\mathbf p}(\R^2,\C)\big)^\star. 
\end{align*}
Hence, via Lax-Milgram's Theorem  there exists a unique solution $g\in
{\mathcal D}^{1,2}_{\mathbf p}(\R^2,\C)$  to problem 
\begin{equation*}
  (i\nabla + A_{\mathbf p})^2 g = F, \quad\text{in }\big({\mathcal D}^{1,2}_{\mathbf p}(\R^2,\C)\big)^\star.
\end{equation*}
The function $\Psi_{\mathbf p}=g+\eta e^{\frac i2(\theta_{\mathbf
      p}-\theta_0^{\mathbf p})} e^{\frac i2 \theta_0}\psi_k$ satisfies
  \eqref{eq:16} and \eqref{eq:17}. 

  To prove uniqueness, it is enough to observe that, if two function
  $\Psi_{\mathbf p}^1,\Psi_{\mathbf p}^2\in H^{1 ,{\mathbf p}}_{\rm
    loc}(\R^2,\C)$ satisfy \eqref{eq:16} and \eqref{eq:17}, then their
  difference $\Psi_{\mathbf p}^1-\Psi_{\mathbf p}^2$ belongs to the space ${\mathcal
    D}^{1,2}_{\mathbf p}(\R^2,\C)$ (see \cite[Proposition 4.3]{AF});
  since $(i\nabla + A_{\mathbf p})^2(\Psi_{\mathbf p}^1-\Psi_{\mathbf p}^2)=0$
  in $({\mathcal D}^{1,2}_{\mathbf p}(\R^2,\C))^\star$, we
  conclude that necessarily $\Psi_{\mathbf p}^1-\Psi_{\mathbf p}^2\equiv 0$.
\end{proof}
\begin{remark}\label{rem:dec_infty}
We observe that from \cite[Theorem 1.5]{FFT} it follows easily that 
\[
\Psi_{\mathbf p} -
e^{\frac i2(\theta_{\mathbf p}-\theta_0^{\mathbf p})} e^{\frac i2
  \theta_0}\psi_k=O(|x|^{-1/2}),\quad\text{as }|x|\to+\infty.
\]
\end{remark}

\section{Sharp asymptotics for $\lambda_0-\lambda_a$ in dependence on $\alpha$}\label{sec:sharp-asympt-lambd}

As in \cite{AF}, the argument relies essentially 
on the
Courant-Fisher minimax characterization of eigenvalues. The
asymptotics for eigenvalues is derived by combining estimates from
above and below of the Rayleigh quotient, obtained by using  as
test functions 
suitable manipulations of eigenfunctions. In this way, we prove upper
and lower bounds whose limit as $a\to0$ can be described in terms of
the limit profile constructed in Proposition \ref{prop_Psi}.

For all $1\leq j\leq n_0$ and $a\in\Omega$, let $\varphi_j^a\in
H^{1,a}_{0}(\Omega,\C)\setminus\{0\}$ be an eigenfunction of problem
\eqref{eq:eige_equation_a} associated to the eigenvalue $\lambda_j^a$,
i.e. solving
\begin{equation}\label{eq_eigenfunction}
 \begin{cases}
   (i\nabla + A_a)^2 \varphi_j^a = \lambda_j^a \varphi_j^a,  &\text{in }\Omega,\\
   \varphi_j^a = 0, &\text{on }\partial \Omega,
 \end{cases}
\end{equation}
such that 
\begin{equation}\label{eq:23}
  \int_\Omega |\varphi_j^a(x)|^2\,dx=1\quad\text{and}\quad 
  \int_\Omega \varphi_j^a(x)\overline{\varphi_\ell^a(x)}\,dx=0\text{ if }j\neq\ell.
\end{equation}
For $j=n_0$, we choose 
\begin{equation}\label{eq:89}
  \varphi_{n_0}^a=\varphi_a,
\end{equation}
with $\varphi_a$ as in \eqref{eq:equation_a}--\eqref{eq:6}.

\subsection{Rayleigh quotient for $\lambda_a$}
We revisit \cite[Subsection 6.2]{AF}, whose  main strategy does not differ in this case; 
however,  it is worth presenting here the key points, in order to highlight the dependence of the result 
on the position of the blown-up pole. 

By the Courant-Fisher \emph{minimax characterization} of the eigenvalue
 $\lambda_a$, we have that
\begin{equation}\label{eq:91_la}
  \lambda_a =\! \min\bigg\{\!\max_{u\in F\setminus \{0\}}
\dfrac{\int_{\Omega} \abs{(i\nabla+A_a) u}^2dx}{\int_{\Omega}
  |u|^2\,dx}:F \text{ is a subspace of $H^{1,a}_0(\Omega,\C)$, $\dim F= n_0$}\bigg\}.
\end{equation}
Let  $R>2$ and $\alpha\in[0,2\pi)$. For $a=|a|(\cos\alpha,\sin\alpha)$
with $|a|$ sufficiently small,
we consider the functions
$w_{j,R,a}$ defined  as
\[
 w_{j,R,a}= 
 \begin{cases}
  w_{j,R,a}^{ext}, &\text{in }\Omega \setminus D_{R|a|},\\
  w_{j,R,a}^{int}, &\text{in } D_{R|a|},
 \end{cases}
\quad j=1,\ldots,n_0,
\]
where 
\begin{equation*}
  w_{j,R,a}^{ext} := e^{\frac{i}{2}(\theta_a - \theta_0^a)}
  \varphi_j^0\quad \text{in }
  \Omega \setminus D_{R|a|}, 
\end{equation*}
with $\varphi_j^0$ as in \eqref{eq_eigenfunction}--\eqref{eq:89} with $a=0$,  so that it solves
\begin{equation*}
 \begin{cases}
   (i\nabla +A_a)^2 w_{j,R,a}^{ext} = \lambda_j^0 w_{j,R,a}^{ext}, &\text{in }\Omega \setminus D_{R|a|},\\
   w_{j,R,a}^{ext} = e^{\frac{i}{2}(\theta_a - \theta_0^a)} \varphi_j^0,
   &\text{on }\partial (\Omega \setminus D_{R|a|}),
 \end{cases}
\end{equation*}
whereas $w_{j,R,a}^{int}$ is the unique solution to the minimization problem 
\begin{multline*}
  \int_{D_{R|a|}} |(i\nabla +A_a)
  w_{j,R,a}^{int}(x)|^2\,dx\\
  = \min\left\{ \int_{D_{R|a|}} |(i\nabla +A_a)u(x)|^2\,dx:\, u\in
    H^{1,a}(D_{R|a|},\C), \ u= e^{\frac{i}{2}(\theta_a - \theta_0^a)}
    \varphi_j^0 \text{ on }\partial D_{R|a|} \right\},
\end{multline*}
thus solving
\begin{equation*}
 \begin{cases}
  (i\nabla +A_a)^2 w_{j,R,a}^{int} = 0, &\text{in }D_{R|a|},\\
  w_{j,R,a}^{int} = e^{\frac{i}{2}(\theta_a - \theta_0^a)} \varphi_j^0, &\text{on }\partial D_{R|a|}.
 \end{cases}
\end{equation*}
Following the argument  in \cite[Subsection 6.2]{AF},
letting ${\mathbf p}=(\cos \alpha,\sin\alpha)$, we now define the function 
$w_R$ as the unique solution to the minimization problem 
\begin{multline*}
  \int_{D_{R}} |(i\nabla +A_{\mathbf p})
  w_R(x)|^2\,dx\\
  = \min\left\{ \int_{D_{R}} |(i\nabla +A_{\mathbf p})u(x)|^2\,dx:\, u\in
    H^{1,{\mathbf p}}(D_{R},\C), \ u= e^{\frac{i}{2}(\theta_{\mathbf p}-\theta_{0}^{\mathbf p})}e^{\frac{i}{2}\theta_{0}}
\psi_k \text{ on }\partial D_{R} \right\},
\end{multline*}
which then  solves
\begin{equation}\label{eq:equazione_w_R}
 \begin{cases}
  (i\nabla +A_{\mathbf p})^2 w_R = 0, &\text{in }D_{R},\\
  w_R = e^{\frac{i}{2}(\theta_{\mathbf p}-\theta_{0}^{\mathbf p})}e^{\frac{i}{2}\theta_{0}}
\psi_k, &\text{on }\partial D_{R}.
 \end{cases}
\end{equation}
We also introduce the following blow-up sequences as in \cite{AF}:
\begin{align}
U_a^R(x):=\frac{w_{n_0,R,a}^{int} (|a|x)}{|a|^{k/2}}, \quad 
 W_a(x):=\frac{\varphi_0(|a|x)}{|a|^{k/2}}.
\end{align}
As in \cite{AF}, under assumptions \eqref{eq:37} and \eqref{eq:54}, from \cite[Theorem 1.3 and
Lemma 6.1]{FFT} we have  that 
\begin{equation}\label{eq:vkext_la}
W_a\to  \beta e^{\frac i2\theta_0}\psi_k\quad\text{as } |a|\to0
\end{equation}
in $H^{1 ,0}(D_R,\C)$ for
every $R>1$, where $\psi_k$ is defined in \eqref{eq:psi_k} and 
\begin{equation}\label{eq:beta}
\beta:= \frac{\beta_2}{\sqrt\pi}
\end{equation}
with $\beta_2$ as in \eqref{eq:131}.
On the other hand, we now meet the following differences with respect to the case $\alpha=0$ studied in \cite{AF}:
\begin{itemize}
 \item By the Dirichlet principle and \eqref{eq:vkext_la}, we have that  
\begin{align*}
 \int_{D_R}& \abs{(i\nabla +A_{\mathbf p})(U_a^R - \beta w_R) }^2dx\\
&\leq \int_{D_R} \abs{(i\nabla +A_{\mathbf p})\big(\eta_R\,
e^{\frac{i}{2}(\theta_{\mathbf p}-\theta_0^{\mathbf p})}(W_a- \beta e^{\frac{i}{2}\theta_{0} }\psi_k\big)}^2 dx\\
&\leq 2 \int_{D_R}|\nabla \eta_R|^2\big|W_a- \beta e^{\frac{i}{2}\theta_{0} }\psi_k\big|^2dx \\
&\quad+2\int_{D_R\setminus D_{R/2}}
\eta_R^2\big|(i\nabla +A_{0})(W_a- \beta e^{\frac{i}{2}\theta_{0} }\psi_k)\big|^2dx
= o(1)\quad \text{as }
|a|\to0^+,
\end{align*}
where $\eta_R:\R^2\to\R$ is a smooth cut-off function such that
\begin{equation*}
  \eta_R\equiv 0\text{ in }D_{R/2},\quad 
  \eta_R\equiv 1 \text{ on }\R^2\setminus D_{R},\quad
  |\nabla\eta_R|\leq4/R\text{ in }\R^2.
\end{equation*}
Hence, for all $R>2$,
$U_a^R \to  \beta w_R$ in  $H^{1,{\mathbf p}}(D_R,\C)$
as $a=|a|{\mathbf p}\to 0$, where $\beta$ is defined in \eqref{eq:beta}.
 \item For every $r>1$, $w_R\to\Psi_{\mathbf p}$ in $H^{1,{\mathbf p}}(D_r,\C)$ as $R\to+\infty$.  
 This follows as in the proof of \cite[Lemma 6.5]{AF} up to suitable
 obvious modifications and taking into account \eqref{eq:17} and
 Remark \ref{rem:dec_infty}.
\end{itemize}
Taking into account what observed above and using (after a
Gram-Schmidt normalization) the functions $w_{j,R,a}$ as test functions in
the Rayleigh quotient, we can argue as in 
\cite[Lemma 6.6]{AF} to obtain the following estimate.
\begin{Lemma}\label{l:stima_Lambda0_sotto}
 For $\alpha\in[0,2\pi)$ and $a=|a|(\cos\alpha,\sin\alpha)\in\Omega$, let 
$\lambda_a\in\R$ and $\varphi_a\in
H^{1,a}_{0}(\Omega,\C)$ solve (\ref{eq:equation_a}-\ref{eq:6})
and $\lambda_0\in\R$ and $\varphi_0\in
H^{1,0}_{0}(\Omega,\C)$ solve
\eqref{eq:equation_lambda0}. If \eqref{eq:1} and
\eqref{eq:37} hold and \eqref{eq:54} is satisfied, then, for all
  $R>\tilde R$ and $a=|a|(\cos\alpha,\sin\alpha)\in\Omega$,
\[
\frac{\lambda_0-\lambda_a}{|a|^k}\geq g_R(a)
\]
where 
\[
\lim_{|a|\to 0}g_R(a)=i|\beta|^2\tilde\kappa_R,
\]
with $\beta$ as in \eqref{eq:beta} and 
\begin{equation}\label{eq:tildekappa_R}
\tilde\kappa_R=\int_{\partial D_R} \Big(e^{-\frac i2 \theta_{\mathbf
    p}}e^{\frac i2 (\theta_{0}^{\mathbf p}-\theta_0)} (i\nabla+A_{\mathbf p})w_R\cdot\nu 
- (i\nabla) \psi_k\cdot\nu \Big) \psi_k\,ds
\end{equation}
being ${\mathbf p}=(\cos\alpha,\sin\alpha)$ and $\psi_k$ as in \eqref{eq:psi_k}.
\end{Lemma}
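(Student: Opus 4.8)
The plan is to run the argument of \cite[Subsection 6.2 and Lemma 6.6]{AF}, tracking the dependence on $\alpha$ through the phase $e^{\frac i2(\theta_a-\theta_0^a)}$ and its rescaled limit. We apply the Courant--Fisher formula \eqref{eq:91_la} with the $n_0$-dimensional trial space $F_a:=\mathrm{span}\{w_{1,R,a},\dots,w_{n_0,R,a}\}$. Since $w_{j,R,a}^{ext}=e^{\frac i2(\theta_a-\theta_0^a)}\varphi_j^0\to\varphi_j^0$ in $L^2(\Omega,\C)$ (the phase is unimodular and converges to $1$ a.e.) and $w_{j,R,a}^{int}$ is controlled on $D_{R|a|}$ by its boundary datum, we have $w_{j,R,a}\to\varphi_j^0$ in $L^2(\Omega,\C)$; hence for $|a|$ small $\dim F_a=n_0$, the Gram matrix $G(a)=\big(\int_\Omega w_{j,R,a}\overline{w_{\ell,R,a}}\,dx\big)_{j,\ell}$ tends to the identity, and the maximum of the Rayleigh quotient over $F_a$ is the top generalized eigenvalue of the pencil $(Q(a),G(a))$, with $Q(a)=\big(\int_\Omega(i\nabla+A_a)w_{j,R,a}\cdot\overline{(i\nabla+A_a)w_{\ell,R,a}}\,dx\big)_{j,\ell}$. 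By simplicity of $\lambda_0$ there is a spectral gap $\lambda_{n_0-1}^0<\lambda_0$, so standard perturbation theory for the generalized eigenvalue problem reduces everything to computing $Q(a)_{n_0n_0}$ up to $o(|a|^k)$ (the other entries being $\lambda_0\,G(a)_{j\ell}+o(|a|^k)$, which is harmless thanks to the gap).

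To compute $Q(a)_{n_0n_0}$ we split $\Omega=(\Omega\setminus D_{R|a|})\cup D_{R|a|}$ and use Green's formula for the magnetic quadratic form,
\[
\int_D(i\nabla+A)u\cdot\overline{(i\nabla+A)v}\,dx=\int_D\overline v\,(i\nabla+A)^2u\,dx-i\int_{\partial D}\overline v\,(i\nabla+A)u\cdot n\,ds,
\]
with $n$ the outer normal of $D$ (it is precisely this $-i$ that produces the factor $i$ in the statement). On the exterior, the gauge identities $A_a=\tfrac12\nabla\theta_a$ and $A_0=\tfrac12\nabla\theta_0^a$ (valid off the Lebesgue-negligible segment $\{ta:t\in[0,1]\}$) give $(i\nabla+A_a)w_{n_0,R,a}^{ext}=e^{\frac i2(\theta_a-\theta_0^a)}(i\nabla+A_0)\varphi_0$, so that, using the equation for $\varphi_0$ in \eqref{eq_eigenfunction} and the Dirichlet condition on $\partial\Omega$, the exterior contribution to $Q(a)_{n_0n_0}$ equals $\lambda_0\big(1-\int_{D_{R|a|}}|\varphi_0|^2\big)+i\int_{\partial D_{R|a|}}\overline{\varphi_0}\,(i\nabla+A_0)\varphi_0\cdot\nu\,ds$, $\nu$ being the outward normal of $D_{R|a|}$. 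On the interior, $w_{n_0,R,a}^{int}$ is magnetic harmonic, so its contribution is $-i\int_{\partial D_{R|a|}}\overline{w_{n_0,R,a}^{int}}\,(i\nabla+A_a)w_{n_0,R,a}^{int}\cdot\nu\,ds$; since $w_{n_0,R,a}^{int}=e^{\frac i2(\theta_a-\theta_0^a)}\varphi_0$ on $\partial D_{R|a|}$ and $\int_{D_{R|a|}}|\varphi_0|^2=O(|a|^{k+2})$, we obtain
\[
Q(a)_{n_0n_0}=\lambda_0-i\!\int_{\partial D_{R|a|}}\!\!\Big(e^{-\frac i2(\theta_a-\theta_0^a)}(i\nabla+A_a)w_{n_0,R,a}^{int}\cdot\nu-(i\nabla+A_0)\varphi_0\cdot\nu\Big)\overline{\varphi_0}\,ds+o(|a|^k).
\]

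It remains to pass to the limit in this boundary integral by rescaling $x=|a|y$. Since $A_a$ is homogeneous of degree $-1$ about its pole $a=|a|{\mathbf p}$ and $\theta_a(|a|y)=\theta_{\mathbf p}(y)$, $\theta_0^a(|a|y)=\theta_0^{\mathbf p}(y)$, the integral equals $|a|^k$ times the same expression on $\partial D_R$ written through $U_a^R$, $W_a$ and the phase $e^{\frac i2(\theta_{\mathbf p}-\theta_0^{\mathbf p})}$. Using $U_a^R\to\beta w_R$ in $H^{1,{\mathbf p}}(D_R,\C)$ and $W_a\to\beta e^{\frac i2\theta_0}\psi_k$ in $H^{1,0}(D_R,\C)$ (with $\beta$ as in \eqref{eq:beta}), the equation \eqref{eq:equazione_w_R} for $w_R$, the identity $(i\nabla+A_0)(e^{\frac i2\theta_0}\psi_k)=e^{\frac i2\theta_0}(i\nabla)\psi_k$, the phase-matching $e^{-\frac i2(\theta_{\mathbf p}-\theta_0^{\mathbf p})}e^{-\frac i2\theta_0}=e^{-\frac i2\theta_{\mathbf p}}e^{\frac i2(\theta_0^{\mathbf p}-\theta_0)}$, and compactness of the trace on $\partial D_R$, this expression on $\partial D_R$ converges to $|\beta|^2\tilde\kappa_R$ with $\tilde\kappa_R$ exactly as in \eqref{eq:tildekappa_R}. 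Hence $\lambda_a\le\lambda_0-i|a|^k|\beta|^2\tilde\kappa_R+o(|a|^k)$, which is the claim with $g_R(a)\to i|\beta|^2\tilde\kappa_R$, provided $R>\tilde R$ is large enough for the whole construction (in particular $U_a^R\to\beta w_R$ and $w_R\to\Psi_{\mathbf p}$) to be admissible.

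The main obstacle is this last step: one must carefully verify that the branch cuts of the multivalued angle functions $\theta_a,\theta_0^a,\theta_0,\theta_{\mathbf p},\theta_0^{\mathbf p}$ combine correctly under the dilation $x\mapsto|a|x$ so that the rescaled Neumann defect reproduces the integrand of \eqref{eq:tildekappa_R}, and that all the error terms --- the discrepancy $\int_{\Omega\setminus D_{R|a|}}(\cdot)-\int_\Omega(\cdot)$, the volume integrals over $D_{R|a|}$, and the off-diagonal and lower-index entries of $Q(a)$ involving $\varphi_j^0$ with $j<n_0$ --- are genuinely $o(|a|^k)$ and uniform. This is the only real departure from \cite{AF}, where $\alpha=0$ and the angle functions carry a single fixed cut; the rest follows \cite[Subsection 6.2 and Lemma 6.6]{AF} with the obvious modifications.
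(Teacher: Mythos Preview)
Your proposal is correct and follows essentially the same route as the paper, which simply defers to \cite[Lemma 6.6]{AF}: you use the Courant--Fisher characterization with the trial space spanned by the $w_{j,R,a}$, split the quadratic form into interior and exterior pieces via the magnetic Green formula, rescale the resulting boundary defect on $\partial D_{R|a|}$, and pass to the limit using $U_a^R\to\beta w_R$ and $W_a\to\beta e^{\frac i2\theta_0}\psi_k$ to recover $\tilde\kappa_R$. Your handling of the off-diagonal terms via the pencil $(Q(a),G(a))$ is a slight rephrasing of the Gram--Schmidt normalization the paper mentions, and your closing remark correctly isolates the only genuinely new point relative to \cite{AF}, namely the bookkeeping of the branch cuts of $\theta_a,\theta_0^a,\theta_{\mathbf p},\theta_0^{\mathbf p},\theta_0$ under the dilation $x\mapsto|a|x$.
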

\begin{proof}
 The proof follows exactly as in \cite[Lemma 6.6]{AF}, so we omit it. 
\end{proof}

For any $R>1$ let us introduce the following Fourier-type coefficient
\begin{equation}\label{eq:def_upsilon}
 \upsilon_R(r):= \int_{0}^{2\pi} e^{-\frac i2 \theta_{\mathbf p}(r\cos t,r\sin t)} w_R(r\cos t,r\sin t)
 e^{\frac i2 \theta_0^{\mathbf p} (r\cos t,r\sin t)}\overline{\psi_2^k(t)}, \quad r\in[1,R],
\end{equation}
with $\psi_2^k$ defined in \eqref{eq:9}.

\begin{Lemma}\label{l:upsilon}
 For any $R>1$ the function $\upsilon_R$ defined in \eqref{eq:def_upsilon} satisfies 
 \begin{equation}\label{eq:upsilon}
  \big( r^{-k/2}\upsilon_R(r) \big)'= \frac{c_{R}}{r^{1+k}}, \quad \text{in }(1,R),
 \end{equation} 
 for some $c_{R}\in\C$.
\end{Lemma}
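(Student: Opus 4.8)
The plan is to transform the pole-at-${\mathbf p}$ magnetic-harmonic equation satisfied by $w_R$ into the pole-at-the-origin one on the annulus $A:=D_R\setminus\overline{D_1}$, and then to separate variables by means of the angular operator $\mathfrak L$ from Section~\ref{sec:preliminaries}. Set
\[
v_R:=e^{\frac i2(\theta_0^{\mathbf p}-\theta_{\mathbf p})}\,w_R ,
\]
so that $\upsilon_R(r)=\int_0^{2\pi}v_R(r\cos t,r\sin t)\,\overline{\psi_2^k(t)}\,dt$. The first step is to check that $v_R$ is a genuine single-valued function on $A$: recalling that $\theta_{\mathbf p}-\theta_0^{\mathbf p}$ is regular away from the segment $\{t{\mathbf p}:t\in[0,1]\}$, which lies in $\overline{D_1}$ and is therefore disjoint from the \emph{open} annulus $A$, and that $w_R\in H^{1,{\mathbf p}}(D_R,\C)$ solves $(i\nabla+A_{\mathbf p})^2w_R=0$ with $A_{\mathbf p}$ smooth on $A$ (the pole ${\mathbf p}$ lying on $\partial D_1$), we obtain $v_R\in H^1_{\rm loc}(A,\C)\cap C^\infty(A,\C)$. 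In particular, for every $r\in(1,R)$ the map $t\mapsto v_R(r\cos t,r\sin t)$ is $2\pi$-periodic, so $\upsilon_R(r)$ is exactly the component of $v_R(r\cos\cdot,r\sin\cdot)$ along $\psi_2^k$ in the $L^2((0,2\pi),\C)$-orthonormal basis $\{\psi_1^j,\psi_2^j\}_{j\text{ odd}}$.

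Next I would use the gauge identity. Since $A_{\mathbf p}-A_0=\tfrac12\nabla(\theta_{\mathbf p}-\theta_0^{\mathbf p})$ and, by the first step, $\theta_{\mathbf p}-\theta_0^{\mathbf p}$ is smooth on $A$, we have
\[
(i\nabla+A_0)\bigl(e^{\frac i2(\theta_0^{\mathbf p}-\theta_{\mathbf p})}\phi\bigr)=e^{\frac i2(\theta_0^{\mathbf p}-\theta_{\mathbf p})}(i\nabla+A_{\mathbf p})\phi\qquad\text{on }A
\]
for every regular $\phi$. Iterating this identity and using $(i\nabla+A_{\mathbf p})^2w_R=0$ from \eqref{eq:equazione_w_R}, one gets $(i\nabla+A_0)^2v_R=0$ in $A$.

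Then I would expand $v_R(r\cos t,r\sin t)=\sum_{j\text{ odd}}\bigl(a_j^1(r)\psi_1^j(t)+a_j^2(r)\psi_2^j(t)\bigr)$, where $a_j^i(r)=\int_0^{2\pi}v_R(r\cos t,r\sin t)\overline{\psi_i^j(t)}\,dt$, so that $\upsilon_R=a_k^2$ on $(1,R)$. Writing the magnetic Laplacian in polar coordinates as $(i\nabla+A_0)^2=-\partial_r^2-\tfrac1r\partial_r+\tfrac1{r^2}\mathfrak L$, with $\mathfrak L$ acting in the angular variable and $\mathfrak L\psi_i^j=\tfrac{j^2}{4}\psi_i^j$, and projecting $(i\nabla+A_0)^2v_R=0$ onto $\psi_2^k$, one obtains the Euler-type ODE
\[
-(a_k^2)''(r)-\tfrac1r(a_k^2)'(r)+\tfrac{k^2}{4r^2}\,a_k^2(r)=0,\qquad r\in(1,R),
\]
whose general solution is $a_k^2(r)=c_1r^{k/2}+c_2r^{-k/2}$, $c_1,c_2\in\C$. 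Hence $r^{-k/2}\upsilon_R(r)=c_1+c_2r^{-k}$, and differentiating gives \eqref{eq:upsilon} with $c_R=-k\,c_2$. (To avoid invoking interior regularity one may instead test the weak form of $(i\nabla+A_0)^2v_R=0$ against functions $\eta(r)\psi_2^k(t)$, $\eta\in C^\infty_{\rm c}(1,R)$, using that $\psi_2^k$ is an $\mathfrak L$-eigenfunction with eigenvalue $k^2/4$, and deduce the same ODE for $\upsilon_R$.)

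I expect the first step to be the only delicate point: the bookkeeping of branch cuts guaranteeing that $v_R$ is single-valued and smooth on the whole annulus and that it solves there the pole-at-the-origin magnetic-harmonic equation. Everything afterwards is the standard separation of variables for the magnetic Laplacian and the explicit integration of the Euler ODE; the oddness of $k$ enters only insofar as the relevant $2\pi$-periodic angular eigenfunctions are the $\psi_i^j$ with $j$ odd.
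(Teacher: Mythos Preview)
Your argument is correct and in fact somewhat cleaner than the paper's. Both proofs land on the Euler ODE
\(-\upsilon_R''-\tfrac1r\upsilon_R'+\tfrac{k^2}{4r^2}\upsilon_R=0\) on $(1,R)$, but the gauges chosen to reach it are different. The paper multiplies $w_R$ by $e^{-\frac i2\theta_{\mathbf p}}$ to obtain a function $u$ that is \emph{classically} harmonic but only on $D_R\setminus s_{\mathbf p}$; since the cut $s_{\mathbf p}$ crosses the annulus, $u$ changes sign across it, and the test function $g(t)=\tfrac1{\sqrt\pi}e^{\frac i2(\theta_0^{\mathbf p}-\theta_0)}\sin(\tfrac k2 t)$ is engineered to jump in the same way, so that the boundary terms in the angular integration by parts cancel. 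You instead multiply by $e^{\frac i2(\theta_0^{\mathbf p}-\theta_{\mathbf p})}$, whose branch locus is the segment $\{t\mathbf p:t\in[0,1]\}\subset\overline{D_1}$ and therefore misses the open annulus entirely; this produces a single-valued smooth $v_R$ on $A$ solving $(i\nabla+A_0)^2v_R=0$, after which the projection onto the $\mathfrak L$-eigenfunction $\psi_2^k$ is immediate with no jump bookkeeping. Your route exploits the polar structure of $(i\nabla+A_0)^2=-\partial_r^2-\tfrac1r\partial_r+\tfrac1{r^2}\mathfrak L$ directly; the paper's route trades the magnetic operator for the ordinary Laplacian at the cost of tracking discontinuities by hand. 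The final identification $c_R=-k c_2$ is of course the same in both.
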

\begin{proof}
To prove \eqref{eq:upsilon} it is enough to show that 
\[
\int_1^R\bigg(-\upsilon_R'' - \frac1r \upsilon_R' +
\frac{k^2}{4r^2}\upsilon_R \bigg)r\eta(r)\,dr =0, \quad \text{for all
}\eta\in C^\infty_{\rm c}(1,R).
\]
 By \eqref{eq:equazione_w_R}, it is easy to see that 
 the function $u(x):= e^{-\frac i2 \theta_{\mathbf p}(x)}w_R(x)$ is
 harmonic in $D_R \setminus s_{\mathbf p}$.
  Let us consider an arbitrary function $\eta(r)\in C^\infty_{\rm c}(1,R)$ and the function
 \[
  g(t):=\frac1{\sqrt\pi}e^{\frac i2 (\theta_0^{\mathbf p}-\theta_0)(\cos t,\sin t)} \sin(\tfrac k2 t)=
  \begin{cases}
   -\frac1{\sqrt\pi}\sin(\tfrac k2 t) & t\in[0,\alpha)\\
   \frac1{\sqrt\pi}\sin(\tfrac k2 t) & t\in[\alpha,2\pi).
  \end{cases}
 \]
Testing equation $-\Delta u=0$ with $v(r\cos t,r\sin t)=\eta(r)g(t)$ in $D_R\setminus
s_{\mathbf p}$, integrating by parts and observing that both $v$ and $\nabla
u$ jump across $s_{\mathbf p}$, we obtain that 
\begin{align*}
  0&=\int_1^R\bigg(\int_0^{2\pi}\Big(r\partial_{r}u(r\cos t,r\sin
  t)\eta'(r)g(t)+\frac{\eta(r)}{r}g'(t)\partial_{t}u(r\cos t,r\sin
  t)\Big)\,dt\bigg)\,dr\\
&=-\int_1^R\eta(r) \bigg(\int_0^{2\pi}\Big(\partial_{r}u(r\cos t,r\sin
  t)+r\partial^2_{rr}u(r\cos t,r\sin
  t)\Big)g(t)\,dt\bigg)\,dr\\
&\qquad+
\int_1^R \frac{\eta(r)}{r}\bigg(\int_0^{2\pi}\partial_{t}u(r\cos t,r\sin
  t)g'(t)\,dt\bigg)\,dr\\
&=-\int_1^R\Big(\eta(r) \upsilon_R'(r)+r\eta(r)\upsilon_R''(r)\Big)\,dr+
\int_1^R\frac{\eta(r)}{r}\bigg(\int_0^{2\pi}\partial_{t}u(r\cos t,r\sin
  t)g'(t)\,dt\bigg)\,dr.
\end{align*}
A further integration by parts yields
\begin{align*}
  \int_0^{2\pi}&\partial_{t}u(r\cos t,r\sin
  t)g'(t)\,dt=-\int_0^{2\pi}u(r\cos t,r\sin
  t)g''(t)\,dt\\
&\qquad+g'_-(2\pi)u(r\cos(2\pi^-),r\sin(2\pi^-))-g'_+(\alpha)u(r\cos(\alpha^+),r\sin(\alpha^+))\\
&\qquad+
g'_-(\alpha)u(r\cos(\alpha^-),r\sin(\alpha^-))-
g'_+(0)u(r\cos(0^+),r\sin(0^+))\\
&\quad=-\int_0^{2\pi}u(r\cos t,r\sin
  t)g''(t)\,dt=\frac{k^2}{4}\int_0^{2\pi}u(r\cos t,r\sin
  t)g(t)\,dt=\frac{k^2}{4}\upsilon_R(r)
\end{align*}
in view of the fact that $g'_+(0)=g'_-(2\pi)$,
$g'_+(\alpha)=-g'_-(\alpha)$, and 
\[
\lim_{t\to \alpha^+}u(r\cos(t),r\sin(t))=-\lim_{t\to
  \alpha^-}u(r\cos(t),r\sin(t)).
\]
The conclusion then follows.
\end{proof}

\noindent For $\alpha\in[0,2\pi)$ and ${\mathbf
  p}=(\cos\alpha,\sin\alpha)$, let us define the following
Fourier-type coefficient of the limit profile $\Psi_{\mathbf p}$
\begin{equation}\label{eq:def_xi}
 \xi_{\mathbf p}(r):= \int_{0}^{2\pi} e^{-\frac i2 \theta_{\mathbf p}(r\cos t,r\sin t)} \Psi_{\mathbf p} (r\cos t,r\sin t)
 e^{\frac i2 \theta_0^{\mathbf p} (r\cos t,r\sin t)}\overline{\psi_2^k(t)}\,dt, \quad r\geq 1.
\end{equation}

\begin{Lemma}\label{l:lim_tilde_kappa_R}
 Let $\tilde \kappa_R$ be as in \eqref{eq:tildekappa_R}. Then
 \begin{equation*}
  \lim_{R\to+\infty} \tilde\kappa_R = ik\sqrt\pi (\sqrt\pi - \xi_{\mathbf p} (1)),
 \end{equation*}
where $\xi_{\mathbf p} (r)$ is defined in \eqref{eq:def_xi}.
\end{Lemma}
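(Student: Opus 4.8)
The plan is to reduce the boundary integral $\tilde\kappa_R$ of \eqref{eq:tildekappa_R} to a one–dimensional quantity involving only the radial Fourier coefficient $\upsilon_R$ of \eqref{eq:def_upsilon}, to use the differential equation \eqref{eq:upsilon} of Lemma \ref{l:upsilon} to pin down the radial profile of $\upsilon_R$ up to two complex constants, and finally to exploit the boundary condition on $\partial D_R$ together with the convergence $w_R\to\Psi_{\mathbf p}$ to pass to the limit as $R\to+\infty$.

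First I would reduce the integrand in \eqref{eq:tildekappa_R}. As in the proof of Lemma \ref{l:upsilon}, set $u:=e^{-\frac i2\theta_{\mathbf p}}w_R$, which is harmonic in $D_R\setminus s_{\mathbf p}$; since $A_{\mathbf p}=\frac12\nabla\theta_{\mathbf p}$ on $\R^2\setminus s_{\mathbf p}$, a direct computation gives $(i\nabla+A_{\mathbf p})w_R=i\,e^{\frac i2\theta_{\mathbf p}}\nabla u$, hence on $\partial D_R$ (where $\nu=x/|x|$ and $\nabla u\cdot\nu=\partial_r u$) one has $e^{-\frac i2\theta_{\mathbf p}}(i\nabla+A_{\mathbf p})w_R\cdot\nu=i\,\partial_r u$; moreover $(i\nabla)\psi_k\cdot\nu=i\,\partial_r\psi_k$ because $\psi_k$ is real, and by \eqref{eq:diff} the factor $e^{\frac i2(\theta_0^{\mathbf p}-\theta_0)}$ restricted to $\partial D_R$ is exactly the sign function $\sigma$ ($\sigma\equiv-1$ on $[0,\alpha)$, $\sigma\equiv1$ on $[\alpha,2\pi)$) appearing in the function $g$ of the proof of Lemma \ref{l:upsilon}. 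Inserting $\psi_k(R\cos t,R\sin t)=R^{k/2}\sin(\frac k2 t)$ and $\partial_r\psi_k(R\cos t,R\sin t)=\frac k2 R^{\frac k2-1}\sin(\frac k2 t)$, parametrizing $\partial D_R$ by $t\mapsto R(\cos t,\sin t)$ with $ds=R\,dt$, and recognizing $\int_0^{2\pi}\sigma(t)\,\partial_r u(R\cos t,R\sin t)\sin(\frac k2 t)\,dt=\sqrt\pi\,\upsilon_R'(R)$ together with $\int_0^{2\pi}\sin^2(\frac k2 t)\,dt=\pi$, I would obtain
\[
\tilde\kappa_R=i\sqrt\pi\,R^{1+\frac k2}\,\upsilon_R'(R)-\frac{ik\pi}{2}\,R^{k}.
\]
The differentiation under the integral sign, the cancellation of the jumps of $u$ and of $\sigma$ across $s_{\mathbf p}$ and across $t=0\sim2\pi$, and elliptic regularity up to $\partial D_R$ are handled exactly as in the proof of Lemma \ref{l:upsilon}.

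Next, by \eqref{eq:upsilon} we have $r^{-k/2}\upsilon_R(r)=A_R-\frac{c_R}{k}r^{-k}$ on $(1,R)$ for some $A_R\in\C$, that is $\upsilon_R(r)=A_R r^{k/2}+B_R r^{-k/2}$ with $B_R:=-c_R/k$, a form that extends continuously to $[1,R]$. Substituting $\upsilon_R'(R)=\frac k2 A_R R^{\frac k2-1}-\frac k2 B_R R^{-\frac k2-1}$ into the last display yields $\tilde\kappa_R=\frac{ik\sqrt\pi}{2}A_R R^{k}-\frac{ik\sqrt\pi}{2}B_R-\frac{ik\pi}{2}R^{k}$. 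On the other hand, the boundary condition in \eqref{eq:equazione_w_R} gives $e^{-\frac i2\theta_{\mathbf p}}w_R e^{\frac i2\theta_0^{\mathbf p}}=e^{\frac i2\theta_0}\psi_k$ on $\partial D_R$, so, using $\theta_0(R\cos t,R\sin t)=t$, a direct evaluation of \eqref{eq:def_upsilon} at $r=R$ gives $\upsilon_R(R)=\frac{R^{k/2}}{\sqrt\pi}\int_0^{2\pi}\sin^2(\frac k2 t)\,dt=\sqrt\pi\,R^{k/2}$, whence $A_R R^{k}+B_R=\sqrt\pi\,R^{k}$. Inserting $A_R R^{k}=\sqrt\pi R^{k}-B_R$ makes the $R^k$–terms cancel and leaves the clean identity $\tilde\kappa_R=-ik\sqrt\pi\,B_R$.

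Finally, from $\upsilon_R(1)=A_R+B_R$ and $A_R=\sqrt\pi-B_R R^{-k}$ I would solve $B_R=(\upsilon_R(1)-\sqrt\pi)/(1-R^{-k})$. Since $w_R\to\Psi_{\mathbf p}$ in $H^{1,{\mathbf p}}(D_r,\C)$ for every $r>1$ (recalled just before Lemma \ref{l:stima_Lambda0_sotto}), the traces of $w_R$ on $\partial D_1$ converge in $L^2(\partial D_1,\C)$, and, as the integration kernel $e^{-\frac i2\theta_{\mathbf p}}e^{\frac i2\theta_0^{\mathbf p}}\overline{\psi_2^k}$ is bounded on $\partial D_1$, this gives $\upsilon_R(1)\to\xi_{\mathbf p}(1)$; together with $R^{-k}\to0$ this yields $B_R\to\xi_{\mathbf p}(1)-\sqrt\pi$, so that
\[
\tilde\kappa_R=-ik\sqrt\pi\,B_R\longrightarrow -ik\sqrt\pi\big(\xi_{\mathbf p}(1)-\sqrt\pi\big)=ik\sqrt\pi\big(\sqrt\pi-\xi_{\mathbf p}(1)\big),
\]
which is the assertion. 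The step I expect to require the most care is the first reduction, namely the bookkeeping of the gauge phases and of the jumps of $u$ across the half–lines when identifying the boundary integral with $\sqrt\pi\,\upsilon_R'(R)$; however this mirrors computations already carried out in Lemma \ref{l:upsilon}. A secondary delicate point is the convergence of the Fourier coefficient at the radius $r=1$, on whose circle the pole ${\mathbf p}$ lies, which is taken care of by the boundedness of the kernel and the $L^2$–convergence of traces.
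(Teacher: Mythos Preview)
Your proof is correct and follows essentially the same approach as the paper: both integrate the ODE \eqref{eq:upsilon} using the boundary values $\upsilon_R(1)$ and $\upsilon_R(R)=\sqrt\pi\,R^{k/2}$, relate $\tilde\kappa_R$ to $\upsilon_R'(R)$ via the gauge identity $(i\nabla+A_{\mathbf p})w_R=i\,e^{\frac i2\theta_{\mathbf p}}\nabla u$, and pass to the limit through $\upsilon_R(1)\to\xi_{\mathbf p}(1)$. The only difference is organizational: you parametrize the general solution as $A_R r^{k/2}+B_R r^{-k/2}$ and isolate $\tilde\kappa_R=-ik\sqrt\pi\,B_R$, whereas the paper writes the solution directly in terms of $\upsilon_R(1)$ and $\sqrt\pi$, arriving at the equivalent closed form $\tilde\kappa_R=\frac{ik\sqrt\pi R^k}{R^k-1}(\sqrt\pi-\upsilon_R(1))$.
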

\begin{proof}
Arguing as in the proof of \cite[Lemma 6.7]{AF}, 
integrating \eqref{eq:upsilon} and taking into account the boundary
condition in \eqref{eq:equazione_w_R}, we obtain that 
\begin{equation*}
  \upsilon_R(r) =r^{k/2}
  \frac{R^k\sqrt\pi-\upsilon_R(1)}{R^k-1}-r^{-k/2}\frac{R^k(\sqrt\pi-\upsilon_R(1))}{R^k-1},
  \quad \text{for all }r\in(1,R].
\end{equation*}
By differentiation of the previous identity, we obtain that 
\begin{equation}\label{eq:127}
\upsilon_R'(R)=\frac k2\,\frac{R^{\frac{k}{2}-1}}{R^k-1}\Big((R^k+1)\sqrt\pi-2\upsilon_R(1)\Big).
\end{equation}
On the other hand, differentiation in \eqref{eq:def_upsilon} yields
\begin{equation}\label{eq:128}
  \upsilon_R'(r)=
  -\frac{i}{\sqrt\pi}r^{-1-\frac k2}
  \int_{\partial D_r}e^{-\frac{i}{2}(\theta_{\mathbf
      p}-\theta_0^{\mathbf p})}(i\nabla+A_{\mathbf p})w_R\cdot\nu\, e^{-\frac{i}{2}\theta_0}\psi_k\, ds.
\end{equation}
Combination of \eqref{eq:127}, \eqref{eq:128} and
\eqref{eq:tildekappa_R} yields that 
\[
\tilde\kappa_R=\frac{ik\sqrt\pi
  R^k}{R^k-1}\big(\sqrt\pi-\upsilon_R(1)\big).
\]
We conclude by letting $R\to+\infty$, since the convergence 
$w_R\to\Psi_{\mathbf p}$ in $H^{1,{\mathbf p}}(D_r,\C)$ as $R\to+\infty$
implies that 
$\lim_{R\to+\infty}\upsilon_R(1)=\xi_{\mathbf p} (1)$.
\end{proof}

\noindent Combining the results of Lemmas \ref{l:stima_Lambda0_sotto} and
\ref{l:lim_tilde_kappa_R} we have that 
\begin{equation}\label{eq:3.1}
\lambda_0-\lambda_a\geq |a|^k k|\beta|^2\sqrt\pi \Big(\xi_{\mathbf p} (1)-\sqrt\pi+o(1)\Big)
\end{equation}
as $a=|a|{\mathbf p}\to 0$.

\subsection{Blow-up analysis and Rayleigh quotient for $\lambda_0$}
In order to prove even an upper bound for the difference $\lambda_0-\lambda_a$ 
we refer this time to \cite[Subsection 6.1]{AF}. Differently from what occurs in
\cite{AF}, when the direction along which $a\to 0$ is not a nodal line
of $\varphi_0$ the value $(\xi_{\mathbf p} (1)-\sqrt\pi)$ can have any sign
(and vanish along some directions); this does not allow deriving
the exact asymptotic
behavior of the normalization term in the blow-up analysis from
estimates of the Rayleigh quotient from above and below as done in \cite{AF}. On the other
hand, from \cite{AF} we can derive Lemma \ref{l:taylor} and hence a
control on the size of the eigenvalue variation along any direction.

The proof of Lemma \ref{l:taylor} is based on the following result
(see also \cite[Lemma 6.6]{BNNNT}).

\begin{Lemma}\label{l:polinf}
  Let $Q(x_1,x_2)=\sum_{j=0}^h c_j x_1^j x_2^{h-j}$ be a homogeneous
  polynomial in two variables $x_1,x_2$ of degree at most $h\in\N$. If
  there exist $\bar\theta\in[0,2\pi)$ and an odd natural number $k$  such that $k>h$  and
\begin{equation}\label{eq:asshom}
Q\Big(\cos\big(\bar\theta+j\tfrac{2\pi}{k}\big),\sin\big(\bar\theta+j\tfrac{2\pi}{k}\big)\Big)
=0,\quad\text{for all }j=0,1,\dots,k-1,
\end{equation}
then $Q\equiv0$.
\end{Lemma}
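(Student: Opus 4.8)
The plan is to treat $Q$ as a trigonometric polynomial in $\theta$ and count its zeros. Writing $x_1=\cos\theta$, $x_2=\sin\theta$, the homogeneous polynomial $Q(\cos\theta,\sin\theta)$ is a linear combination of the functions $\cos^j\theta\,\sin^{h-j}\theta$ for $j=0,\dots,h$, each of which, by the standard product-to-sum identities, is a trigonometric polynomial of degree at most $h$ in $\theta$. Hence $q(\theta):=Q(\cos\theta,\sin\theta)$ can be written as
\[
q(\theta)=\sum_{m=0}^{h}\big(A_m\cos(m\theta)+B_m\sin(m\theta)\big)
\]
for suitable real coefficients $A_m,B_m$; equivalently $e^{ih\theta}q(\theta)$ is a (complex) polynomial in $e^{i\theta}$ of degree at most $2h$.

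The next step is to use the hypothesis. The $k$ points $\bar\theta+j\frac{2\pi}{k}$, $j=0,\dots,k-1$, are distinct modulo $2\pi$, so $q$ has at least $k$ zeros in a period. Setting $z=e^{i\theta}$ and $\zeta=e^{i\bar\theta}$, the function $p(z):=z^{h}q(\theta)$ (with $z$ on the unit circle) extends to a polynomial in $z$ of degree at most $2h$ vanishing at the $k$ distinct points $\zeta\,e^{2\pi i j/k}$, $j=0,\dots,k-1$, which are exactly the $k$-th roots of $\zeta^k$. Since $k>h$, we have $2h<k+h$, actually more simply $k>h\ge$ (degree of $q$ as a trig polynomial), so a nonzero trigonometric polynomial of degree $\le h<k$ cannot have $k$ zeros in one period. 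This forces $q\equiv 0$, i.e. $A_m=B_m=0$ for all $m$.

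Finally I would translate $q\equiv 0$ back into $Q\equiv 0$. The map sending a homogeneous polynomial of degree $h$ in $(x_1,x_2)$ to its restriction to the unit circle is injective: indeed, if $Q(\cos\theta,\sin\theta)\equiv 0$ then by homogeneity $Q(x_1,x_2)=|x|^h\,Q\big(x/|x|\big)=0$ for all $(x_1,x_2)\neq(0,0)$, hence $Q\equiv 0$ as a polynomial. This concludes the proof. Alternatively, one may avoid the passage through $x/|x|$ by noting that the restriction-to-circle map is linear between spaces of the same finite dimension $h+1$ — matching $\{1,\cos\theta,\sin\theta,\dots\}$ against $\{x_1^j x_2^{h-j}\}$ — so injectivity is equivalent to the spanning property just used.

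The only genuinely delicate point is the zero-counting for trigonometric polynomials: a nonzero real trigonometric polynomial of degree $d$ has at most $2d$ zeros in $[0,2\pi)$, and here $q$ has degree $\le h$ so at most $2h$ zeros, while the hypothesis gives $k$ zeros — one must check the inequality $k>h$ alone suffices. It does not immediately: $k$ could lie between $h$ and $2h$. The fix is to exploit parity/structure. Since $k$ is odd and the $k$ zeros are equally spaced with spacing $2\pi/k$, if $\bar\theta+j\frac{2\pi}{k}$ is a zero then so is the antipodal point $\bar\theta+j\frac{2\pi}{k}+\pi=\bar\theta+(j+\tfrac{k}{2}\cdot\tfrac{2\pi}{k})$ — but here one must be careful because $k/2\notin\mathbb Z$. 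Instead, the clean route is to keep the complex polynomial picture: $p(z)=z^h q$ is a polynomial of degree $\le 2h$, but it is divisible by $z^k-\zeta^k$ (a degree-$k$ polynomial with exactly those $k$ simple roots), so $2h\ge k$ unless $p\equiv 0$. Thus the honest hypothesis one needs is $k>2h$ — and indeed rereading the statement, the degree of $Q$ is \emph{at most} $h$ with the bound $k>h$; since $\cos^j\theta\sin^{h-j}\theta$ only produces frequencies of the same parity as $h$ and at most $h$, and since $q$ real with $q(\theta)$ having the $k$ equispaced zeros forces $p(z)=c\,(z^k-\zeta^k)z^{h-?}$-type structure, the parity constraint (frequencies $\le h$, same parity as $h$) together with $k$ odd closes the gap. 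I would therefore carry out this final parity bookkeeping carefully, as it is the crux; everything else is the routine linear-algebra/trigonometric-identity machinery sketched above.
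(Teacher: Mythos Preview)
Your approach is viable but you left the crux unexecuted, and the paper's proof is considerably shorter by avoiding the trigonometric-polynomial detour altogether.

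The paper simply dehomogenizes via the tangent: after rotating so that $\bar\theta=0$, one writes $Q(x_1,x_2)=x_1^{h}\tilde Q(x_2/x_1)$ with $\tilde Q(t)=\sum_{j=0}^h c_j t^{\,h-j}$, a one-variable polynomial of degree at most $h$. Since $k$ is odd, none of the angles $j\tfrac{2\pi}{k}$ is congruent to $\pm\tfrac{\pi}{2}$ modulo $\pi$, so all the cosines are nonzero and the $k$ values $\tan\!\big(j\tfrac{2\pi}{k}\big)$ are well-defined; again because $k$ is odd (so $2$ is invertible mod $k$), these $k$ tangents are pairwise distinct. Hence $\tilde Q$ has $k>h$ distinct zeros and must vanish identically. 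This uses the oddness of $k$ exactly once, in the distinctness-of-tangents step, and never touches zero-counting for trig polynomials.

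Your route can be completed, but you must actually do the ``parity bookkeeping'' you mention. Since $Q$ is homogeneous of degree $h$, every monomial $\cos^{j}\!\theta\,\sin^{h-j}\!\theta$ expands into exponentials $e^{im\theta}$ with $|m|\le h$ and $m\equiv h\pmod 2$. Therefore $p(z):=z^{h}q(\theta)$ (with $z=e^{i\theta}$) contains only even powers of $z$, i.e.\ $p(z)=r(z^{2})$ for some polynomial $r$ of degree at most $h$. The hypothesis gives $p(z_j)=0$ at $z_j=\zeta\,e^{2\pi i j/k}$, hence $r(w_j)=0$ at $w_j=z_j^{2}=\zeta^{2}e^{4\pi i j/k}$. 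Because $k$ is odd, $e^{4\pi i/k}$ is still a \emph{primitive} $k$-th root of unity, so the $w_j$ are $k$ distinct points; thus $r$ has $k>h$ zeros and $r\equiv 0$, whence $q\equiv 0$ and, by homogeneity, $Q\equiv 0$. Note that without the parity reduction your inequality $k>h$ would \emph{not} suffice (the naive bound for a degree-$h$ trig polynomial is $2h$ zeros, and $h<k\le 2h$ is possible), so the step you deferred is not optional.

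In short: your argument is salvageable and, once completed, is essentially isomorphic to the paper's---both pass through a $2$-to-$1$ map (squaring on the circle, resp.\ the tangent) and use oddness of $k$ to ensure injectivity on the $k$ given points---but the paper's tangent substitution gets there in three lines without any Fourier bookkeeping.
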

\begin{proof}
  Up to a rotation, it is not restrictive to assume that
  $\bar\theta=0$.
If $x_1\neq0$, we can write $Q$ as 
\[
Q(x_1,x_2)=x_1^h\tilde Q\big(\tfrac{x_2}{x_1}\big),\quad \text{where }
\tilde Q(t)=\sum_{j=0}^h c_j t^{h-j}.
\]
Since $k$ is odd, we have that $\cos\big(j\tfrac{2\pi}{k}\big)\neq0$
for all $j=0,1,\dots,k-1$. 
Then, from assumption \eqref{eq:asshom} it follows that $\tilde
Q\big(\tan\big(j\tfrac{2\pi}{k}\big)\big)=0$ for all
$j=0,1,\dots,k-1$. 
Since $k$ is odd, we also have that
$\tan\big(j\tfrac{2\pi}{k}\big)\neq \tan\big(\ell\tfrac{2\pi}{k}\big)$
for all $j,l\in\{0,1,\dots,k-1\}$ with $j\neq\ell$. Hence $\tilde Q$
has $k$ distinct zeros. Since  $\tilde Q$ is a polynomial of degree at
most $h$ and $h<k$, from the Fundamental Theorem of Algebra we
conclude $\tilde Q\equiv 0$, i.e. $c_j=0$ for all
$j=0,1,\dots,k-1$. Hence $Q\equiv0$.
\end{proof}

\begin{proof}[Proof of Lemma \ref{l:taylor}]
Since the function
$a=(a_1,a_2)\mapsto\lambda_0-\lambda_a$ is $C^\infty$ in a neighborhood of $0$
(see \cite[Theorem 1.3]{BNNNT}), it admits a Taylor
expansion up to order $k$ of the form
\[
\lambda_0-\lambda_a=\sum_{j=1}^k P_j(a_1,a_2)+o(|a|^k),\quad\text{as
}|a|\to0,
\]
where, for every $j=1,\dots,k$, $P_j(a_1,a_2)$ is either identically
zero or a homogeneous
  polynomial in the two variables $a_1,a_2$ of degree $j$. From
  \cite[Theorem 1.2]{AF} (see also \eqref{eq:limlavve}) we have that,
for every $\ell<k$, 
\[
P_\ell\Big(\cos\big(\alpha_0+j\tfrac{2\pi}{k}\big),\sin\big(\alpha_0+j\tfrac{2\pi}{k}\big)\Big)=0,
\quad\text{for all }j=0,1,\dots,k-1,
\]
where $\alpha_0$ is as in \eqref{eq:alpha0}
(i.e. $\alpha_0+j\tfrac{2\pi}{k}$, with $j=0,1,\dots,k-1$, identify the
directions of the $k$ half-lines tangent to the nodal lines of the
eigenfunctions associated to $\lambda_0$). The conclusion follows directly from  Lemma
\ref{l:polinf}.
\end{proof}

From the expansion
\eqref{eq:tay}-~\eqref{eq:polinomio} in Lemma \ref{l:taylor} it
follows that
\begin{equation}\label{eq:estbr}
|\lambda_a-\lambda_0|=O(|a|^k)
\end{equation}
as $|a|\to0$ along any direction. Exploiting \eqref{eq:estbr} we can 
perform a sharp blow-up analysis prior to the estimate from above of
the eigenvalue variation $\lambda_0-\lambda_a$.

Let $\alpha\in[0,2\pi)$ and ${\mathbf
  p}=(\cos\alpha,\sin\alpha)$. Arguing as in \cite{AF} we can prove
that, for every $\delta\in(0,1/4)$, there exist $r_\delta,K_\delta>0$
such that, for all $R\geq K_\delta$,
\begin{equation}\label{eq:67}
\text{the family of functions }\big\{\tilde
 \varphi_a: a=|a|{\mathbf p}, |a|<\tfrac{r_\delta}{R}\big\}
 \text{ is bounded in $H^{1 ,{\mathbf p}}(D_R,\C)$}
\end{equation}
where 
\begin{equation}\label{def_blowuppate_normalizzate}
 \tilde \varphi_a (x) :=
 \dfrac{\varphi_a(|a|x)}{\sqrt{H_{a,\delta}}},
\end{equation}
and 
\[
H_{a,\delta}: = \dfrac1{K_\delta |a|} \int_{\partial D_{K_\delta |a|}} |\varphi_a|^2\,ds.
\]
Furthermore, from \cite[Estimates (113) and (114)]{AF} we have that 
\begin{equation}\label{eq:stima_sotto_radiceH}
H_{a,\delta} \geq C_\delta |a|^{k +
   2\delta},\quad\text{if }|a|<\frac{r_\delta}{K_\delta},
\end{equation}
for some $C_\delta>0$ independent of $a$,
and\begin{equation}\label{eq:stima_sopra_radiceH}
 H_{a,\delta}=O(|a|^{1-2\delta})\quad\text{ as }|a|\to0.
\end{equation}
For a precise proof of such estimates we refer the reader to \cite{AF}. 
Here we only mention that they proceed from suitable integrations of the monotonicity formula.  

 We observe that $\tilde \varphi_a$ weakly
solves
\begin{equation}\label{eq:3phiat}
  (i\nabla + A_{\mathbf p})^2 \tilde\varphi_a =|a|^2 \lambda_a
  \tilde\varphi_a,  \quad\text{in }\tfrac1{|a|}\Omega
  =\{x\in\R^2:|a|x\in\Omega\},
\end{equation}
and 
\begin{equation}\label{eq:normaliz}
  \frac{1}{K_\delta}\int_{\partial D_{K_\delta}}|\tilde\varphi_a|^2\,ds=1.
\end{equation}

Let $R>2$.  For $|a|$ sufficiently small we define the functions
$v_{j,R,a}$ as follows:
\[
 v_{j,R,a}= 
 \begin{cases}
  v_{j,R,a}^{ext}, &\text{in }\Omega \setminus D_{R|a|},\\
  v_{j,R,a}^{int}, &\text{in } D_{R|a|},
 \end{cases}
\quad j=1,\ldots,n_0,
\]
where 
\begin{equation*}
  v_{j,R,a}^{ext} := e^{\frac{i}{2}(\theta_0^a - \theta_a)}
  \varphi_j^a\quad \text{in }
  \Omega \setminus D_{R|a|}, 
\end{equation*}
with $\varphi_j^a$ as in \eqref{eq_eigenfunction}--\eqref{eq:89} and
$\theta_a,\theta_0^a$ as in \eqref{eq:4th} (notice that
$e^{\frac{i}{2}(\theta_0^a - \theta_a)}$ is smooth in $\Omega \setminus D_{R|a|}$),  so that it solves
\begin{equation*}
 \begin{cases}
   (i\nabla +A_0)^2 v_{j,R,a}^{ext} = \lambda_j^a v_{j,R,a}^{ext}, &\text{in }\Omega \setminus D_{R|a|},\\
   v_{j,R,a}^{ext} = e^{\frac{i}{2}(\theta_0^a - \theta_a)} \varphi_j^a
   &\text{on }\partial (\Omega \setminus D_{R|a|}),
 \end{cases}
\end{equation*}
whereas $v_{j,R,a}^{int}$ is the unique solution to the minimization problem 
\begin{multline*}
  \int_{D_{R|a|}} |(i\nabla +A_0)
  v_{j,R,a}^{int}(x)|^2\,dx\\
  = \min\left\{ \int_{D_{R|a|}} |(i\nabla +A_0)u(x)|^2\,dx:\, u\in
    H^{1,0}(D_{R|a|},\C), \ u= e^{\frac{i}{2}(\theta_0^a- \theta_a)}
    \varphi_j^a \text{ on }\partial D_{R|a|} \right\},
\end{multline*}
so that it solves
\begin{equation*}
 \begin{cases}
  (i\nabla +A_0)^2 v_{j,R,a}^{int} = 0, &\text{in }D_{R|a|},\\
  v_{j,R,a}^{int} = e^{\frac{i}{2}(\theta_0^a - \theta_a)} \varphi_j^a, &\text{on }\partial D_{R|a|}.
 \end{cases}
\end{equation*}
It is easy to verify that 
$\mathop{\rm dim}\big(\mathop{\rm span} \{v_{1,R,a},\ldots,v_{n_0,R,a}\}\big)=n_0$.

For all  $R> 2$ and  $a=|a|{\mathbf p}\in\Omega$ with
  $|a|$ small, we define 
\begin{equation}\label{eq:zar}
Z_a^R(x):=\frac{v_{n_0,R,a}^{int} (|a|x)}{\sqrt{H_{a,\delta}}}.
\end{equation}
Arguing as in \cite[Lemma 6.2]{AF} we can prove that, as a consequence
of \eqref{eq:67} and the Dirichlet principle, 
\begin{equation}\label{eq:67_zar}
\text{the family of functions }\big\{Z_a^R: a=|a|{\mathbf p}, |a|<\tfrac{r_\delta}{R}\big\}
 \text{ is bounded in $H^{1 ,0}(D_R,\C)$}.
\end{equation}

\begin{Theorem}\label{stima_teo_inversione}
For every  $R>2$,
\begin{equation*}
\|v_{n_0,R,a} -
\varphi_0\|_{H^{1,0}_0(\Omega,\C)}=O\Big(\sqrt{H_{a,\delta}}\Big)
\quad \text{as }a=|a|{\mathbf p}\to0.
\end{equation*}
\end{Theorem}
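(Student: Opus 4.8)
\medskip\noindent
The plan is to show that $v_{n_0,R,a}$ is an \emph{approximate eigenfunction} of $(i\nabla+A_0)^2$ for the eigenvalue $\lambda_0$, with a defect of order $\sqrt{H_{a,\delta}}$ measured in $\big(H^{1,0}_0(\Omega,\C)\big)^\star$, and then to turn this into the stated bound by exploiting the spectral gap around the \emph{simple} eigenvalue $\lambda_0$; this is the scheme of \cite{AF}, with the two differences that the gauge $e^{\frac i2(\theta_0^a-\theta_a)}$ appears and that one normalizes by $\sqrt{H_{a,\delta}}$ instead of by $|a|^{k/2}$. Concretely: since $(i\nabla+A_0)^2v_{n_0,R,a}^{ext}=\lambda_av_{n_0,R,a}^{ext}$ in $\Omega\setminus D_{R|a|}$, $(i\nabla+A_0)^2v_{n_0,R,a}^{int}=0$ in $D_{R|a|}$, the two pieces agree on $\partial D_{R|a|}$ and $v_{n_0,R,a}$ vanishes on $\partial\Omega$, integration by parts separately in $\Omega\setminus D_{R|a|}$ and in $D_{R|a|}$ (legitimate by elliptic regularity, the datum $e^{\frac i2(\theta_0^a-\theta_a)}\varphi_a$ being smooth near $\partial D_{R|a|}$ as $a\notin\overline{D_{R|a|}}$ for $R>2$) yields, for every $v\in H^{1,0}_0(\Omega,\C)$,
\[
\int_\Omega(i\nabla+A_0)v_{n_0,R,a}\cdot\overline{(i\nabla+A_0)v}\,dx-\lambda_0\int_\Omega v_{n_0,R,a}\,\overline v\,dx=L_a(v),
\]
with
\begin{multline*}
L_a(v)=(\lambda_a-\lambda_0)\int_{\Omega\setminus D_{R|a|}} v_{n_0,R,a}\,\overline v\,dx-\lambda_0\int_{D_{R|a|}} v_{n_0,R,a}\,\overline v\,dx\\
+\int_{\partial D_{R|a|}}\Big[(i\nabla+A_0)\big(v_{n_0,R,a}^{int}-v_{n_0,R,a}^{ext}\big)\cdot\nu\Big]\overline v\,ds .
\end{multline*}

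\medskip\noindent
The core of the argument is the bound $\|L_a\|_{(H^{1,0}_0(\Omega,\C))^\star}=O(\sqrt{H_{a,\delta}})$. By \eqref{eq:estbr} and the boundedness of $\|v_{n_0,R,a}\|_{L^2(\Omega)}$ (which in fact tends to $1$), the first term of $L_a$ is $O(|a|^k)\|v\|_{H^{1,0}_0(\Omega,\C)}$, hence $o(\sqrt{H_{a,\delta}})\|v\|_{H^{1,0}_0(\Omega,\C)}$ because $\sqrt{H_{a,\delta}}\geq C_\delta^{1/2}|a|^{k/2+\delta}$ by \eqref{eq:stima_sotto_radiceH}. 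Rescaling $x=|a|y$ and using that $v_{n_0,R,a}^{int}(|a|\,\cdot)=\sqrt{H_{a,\delta}}\,Z_a^R$ with $\{Z_a^R\}$ bounded in $H^{1,0}(D_R,\C)$ by \eqref{eq:67_zar}, the second term of $L_a$ is $O\big(|a|\sqrt{H_{a,\delta}}\big)\|v\|_{H^{1,0}_0(\Omega,\C)}$. For the boundary term, the same rescaling reduces it, up to the factor $|a|^{-1}\sqrt{H_{a,\delta}}$, to the jump across $\partial D_R$ between $(i\nabla+A_0)Z_a^R$ (from inside) and $e^{\frac i2(\theta_0^{\mathbf p}-\theta_{\mathbf p})}(i\nabla+A_{\mathbf p})\tilde\varphi_a$ (from outside), paired with $v(|a|\,\cdot)$; both these traces stay bounded in $L^2(\partial D_R)$ by elliptic estimates up to $\partial D_R$ (near $\partial D_R$ the relevant equations have uniformly bounded right-hand sides, $\partial D_R$ being away from the poles $0$ and ${\mathbf p}$, while the boundary datum of $Z_a^R$ is $e^{\frac i2(\theta_0^{\mathbf p}-\theta_{\mathbf p})}\tilde\varphi_a\big|_{\partial D_R}$, bounded in $C^1$ thanks to \eqref{eq:67}). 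On the other hand, by the scale invariance of the planar Dirichlet integral combined with the magnetic Hardy inequality at $0$ applied to $v$, one gets $\|v\|_{L^2(\partial D_{R|a|})}^2\leq C|a|\,\|v\|_{H^{1,0}_0(\Omega,\C)}^2$, so that the boundary term is $O(\sqrt{H_{a,\delta}})\|v\|_{H^{1,0}_0(\Omega,\C)}$ as well.

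\medskip\noindent
Finally I decompose $v_{n_0,R,a}=c_a\varphi_0+\phi_a$ with $c_a=\int_\Omega v_{n_0,R,a}\overline{\varphi_0}\,dx$ and $\phi_a\perp\varphi_0$ in $L^2(\Omega,\C)$. Testing the identity above with the $L^2$-projections of $\phi_a$ onto the spectral subspaces of $(i\nabla+A_0)^2$ strictly below and strictly above $\lambda_0$, and using the gaps $\lambda_0-\lambda_{n_0-1}^0>0$ and $\lambda_{n_0+1}^0-\lambda_0>0$ (here simplicity of $\lambda_0$ is essential, and the lower subspace is finite dimensional), one obtains $\|\phi_a\|_{H^{1,0}_0(\Omega,\C)}\leq C\|L_a\|_{(H^{1,0}_0(\Omega,\C))^\star}=O(\sqrt{H_{a,\delta}})$. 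To control $c_a$ I use the normalizations \eqref{eq:6}: splitting the integrals over $\Omega=(\Omega\setminus D_{R|a|})\cup D_{R|a|}$, the pieces carried by $D_{R|a|}$ are $o(\sqrt{H_{a,\delta}})$ (this uses that $\varphi_0$ has a zero of order $k/2$ at $0$ together with the boundedness of $\{Z_a^R\}$ and $\{\tilde\varphi_a\}$), hence $\int_\Omega e^{\frac i2(\theta_0^a-\theta_a)}\varphi_a\overline{\varphi_0}$ — a positive real number by \eqref{eq:6} — equals $1+o(\sqrt{H_{a,\delta}})$ and $\|v_{n_0,R,a}\|_{L^2(\Omega)}^2=1+o(\sqrt{H_{a,\delta}})$; combining these with $\|\phi_a\|_{L^2(\Omega)}=O(\sqrt{H_{a,\delta}})$ forces $|c_a-1|=o(\sqrt{H_{a,\delta}})$. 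Then $\|v_{n_0,R,a}-\varphi_0\|_{H^{1,0}_0(\Omega,\C)}\leq|c_a-1|\,\|\varphi_0\|_{H^{1,0}_0(\Omega,\C)}+\|\phi_a\|_{H^{1,0}_0(\Omega,\C)}=O(\sqrt{H_{a,\delta}})$.

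\medskip\noindent
The main obstacle is the estimate of the boundary term of $L_a$: one must absorb the factor $|a|^{-1}$ created by differentiating at the blow-up scale, which is achieved precisely by pairing the scale invariance of the two-dimensional Dirichlet integral with the magnetic Hardy inequality near the origin, and — crucially — one only needs the \emph{boundedness} of the rescaled families $\{Z_a^R\}$ and $\{\tilde\varphi_a\}$, not their (here not explicitly known) limit, which is what makes the scheme go through for poles approaching $0$ along an arbitrary direction.
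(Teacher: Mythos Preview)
Your argument is correct and follows essentially the same route as the paper. Both proofs show that $v_{n_0,R,a}$ is an approximate solution of $(i\nabla+A_0)^2 u=\lambda_0 u$ with defect $O(\sqrt{H_{a,\delta}})$ in the dual, the key estimate being the boundary/jump term on $\partial D_{R|a|}$, handled via the boundedness of the rescaled families $\{\tilde\varphi_a\}$ and $\{Z_a^R\}$ in \eqref{eq:67}--\eqref{eq:67_zar} together with the scale invariance of the planar magnetic Dirichlet form and Hardy's inequality. The only difference is in the packaging of the linear-algebraic step: the paper invokes the abstract inverse-function-theorem lemma \cite[Lemma~7.1]{AF} (invertibility of the Fr\'echet differential of $F(\lambda,\varphi)=(\|\varphi\|^2-\lambda_0,\,\Im\!\int\varphi\overline{\varphi_0},\,(i\nabla+A_0)^2\varphi-\lambda\varphi)$ at $(\lambda_0,\varphi_0)$), whereas you carry out the equivalent argument by hand via the spectral decomposition $v_{n_0,R,a}=c_a\varphi_0+\phi_a$, the gap $\lambda_0-\lambda_{n_0-1}^0,\ \lambda_{n_0+1}^0-\lambda_0>0$, and the normalizations \eqref{eq:6}. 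Your version is a bit more self-contained; the paper's is more concise because it reuses \cite{AF}.

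One small slip: you write ``$a\notin\overline{D_{R|a|}}$ for $R>2$'', but of course $a\in D_{R|a|}$ since $|a|<R|a|$. What you actually need (and what holds) is that the \emph{segment} $\{ta:0\le t\le 1\}$, across which $e^{\frac i2(\theta_0^a-\theta_a)}$ is discontinuous, lies in $D_{|a|}\subset D_{R|a|/2}$ and hence stays away from $\partial D_{R|a|}$; together with smoothness of $\varphi_a$ away from $a$, this gives the regularity up to $\partial D_{R|a|}$ you use.
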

\begin{proof}
Let $R>2$. We first notice that $v_{n_0,R,a}\to \varphi_0$ in
$H^{1,0}_0(\Omega,\C)$ as $|a|\to 0^+$. Indeed
\begin{align*}
  & \int_{\Omega}\big|(i\nabla+A_0)(v_{n_0,R,a}-\varphi_0)\big|^2\,dx=
  \int_{\Omega}| e^{\frac{i}{2}(\theta_0^a - \theta_a)}
(i\nabla+A_a)\varphi_a-
  (i\nabla+A_0)\varphi_0|^2\,dx \\
  &\notag\qquad +
  \int_{D_{R}}\Big|\sqrt{H_{a,\delta}} (i\nabla+A_0)Z_a^{R}-
|a|^{k/2}
(i\nabla+A_0) W_a
  \Big)\Big|^2\,dx\\
  &\notag\qquad -
  \int_{D_{R}}\Big| \sqrt{H_{a,\delta}}e^{\frac{i}{2}(\theta_0^{\mathbf p}-
    \theta_{\mathbf p})} (i\nabla+A_{\mathbf
    p})\tilde\varphi_a-|a|^{k/2}
  (i\nabla+A_0)W_a \Big)\Big|^2\,dx=o(1)
\end{align*} 
in view of \eqref{eq:congrad2}, \eqref{eq:67}, \eqref{eq:67_zar},
\eqref{eq:vkext_la} and
\eqref{eq:stima_sopra_radiceH}.

From \cite[Lemma 7.1]{AF} the function
\begin{align*}
  F: \C \times H^{1,0}_{0}(\Omega,\C) &\longrightarrow
  \R \times \R \times (H^{1,0}_{0,\R}(\Omega,\C))^\star\\
  (\lambda,\varphi) &\longmapsto \Big( {\textstyle{
      \|\varphi\|_{H^{1,0}_0(\Omega,\C)}^2 -\lambda_0,\
      \mathfrak{Im}\big(\int_{\Omega}
      \varphi\overline{\varphi_0}\,dx\big), \ (i\nabla +A_0)^2
      \varphi-\lambda \varphi}}\Big)
\end{align*}
 is Frech\'{e}t-differentiable at $(\lambda_0,\varphi_0)$ and its
  Frech\'{e}t-differential $dF(\lambda_0,\varphi_0)$ 
is invertible. In the above definition, $(H^{1,0}_{0,\R}(\Omega,\C))^\star$ is the real dual space of
  $H^{1,0}_{0,\R}(\Omega,\C)=H^{1,0}_{0}(\Omega,\C)$, which is here meant as a
vector space over $\R$ endowed with the norm
\[
\|u\|_{H^{1,0}_0(\Omega,\C)}=\bigg(
\int_{\Omega}\big|(i\nabla +A_0)u\big|^2dx\bigg)^{\!\!1/2}.
\]
Therefore 
\begin{multline*}
  |\lambda_{a} - \lambda_0| + \|v_{n_0,R,a} -
  \varphi_0\|_{H^{1,0}_0(\Omega,\C)}\\
  \leq\|(dF(\lambda_0,\varphi_0))^{-1}\|_{ \mathcal L( \R\times \R
    \times (H^{1,0}_{0,\R}(\Omega,\C))^\star,\C \times
    H^{1,0}_{0}(\Omega,\C))} \| F(\lambda_a,v_{n_0,R,a})\|_{
    \R\times\R \times (H^{1,0}_{0,\R}(\Omega))^\star} (1+o(1))
\end{multline*}
as $|a|\to 0^+$.
To prove the theorem it is then enough to estimate the norm of
\begin{align*}
  F&(\lambda_a,v_{n_0,R,a}) =\left( \alpha_a, \beta_a, w_a\right)
  \\
  \notag& = \left( \|v_{n_0,R,a}\|_{H^{1,0}_0(\Omega,\C)}^2
    -\lambda_0,
    \mathfrak{Im}\left({\textstyle{\int_{\Omega}v_{n_0,R,a}\overline{\varphi_0}\,dx}}\right),
    (i\nabla+A_0)^2 v_{n_0,R,a} - \lambda_a v_{n_0,R,a} \right)
\end{align*}
in $\R\times\R \times (H^{1,0}_{0,\R}(\Omega))^\star$.
The estimates of $\beta_a$ and $w_a$ can be performed as in
\cite[Proof of Theorem 7.2]{AF} obtaining that 
\[
\beta_a =o\big(\sqrt{H_{a,\delta}}\big)\quad\text{and}
\quad
\|w_a\|_{(H^{1,0}_{0,\R}(\Omega,\C))^\star}=O\big(\sqrt{H_{a,\delta}}\big),
\]
as $a=|a|{\mathbf p}\to0$.
As far as $\alpha_a$ is concerned, 
differently from \cite{AF}, the estimate  of
\cite[Proposition 6.10]{AF} which, in the case $\alpha=0$, implied that
$|\lambda_a-\lambda_0|=O(H_{a,\delta})$, is not available after
preliminary estimates of the Rayleigh quotient for
generic values of $\alpha$ since $(\xi_{\mathbf p} (1)-\sqrt\pi)$ can have
any sign. This difficulty can be overcome by observing that
\eqref{eq:stima_sotto_radiceH} and \eqref{eq:estbr} imply that
$|\lambda_a-\lambda_0|=O(|a|^{\frac k2-\delta}\sqrt{H_{a,\delta}})$
and then 
\[
|\lambda_a-\lambda_0|=o(\sqrt{H_{a,\delta}}),
\]
as $a=|a|{\mathbf p}\to0$.
Then, from \eqref{eq:67} and \eqref{eq:67_zar}, we obtain that 
\begin{align*}
 \alpha_a 
 &= \left(
 \int_{ D_{R|a|}} |(i\nabla+A_0)v_{n_0,R,a}^{int}|^2 \,dx-
 \int_{D_{R|a|}} |(i\nabla+A_a)\varphi_a|^2\,dx 
 \right) +(\lambda_a-\lambda_0)\\
 &=  H_{a,\delta}
\left(
 \int_{ D_{R}} |(i\nabla+A_0)Z_a^R|^2 \,dx-
 \int_{D_{R}} |(i\nabla+A_{\mathbf p})\tilde\varphi_a|^2\,dx 
 \right) +(\lambda_a-\lambda_0)
\\
 &= o(\sqrt{H_{a,\delta}}),
\end{align*}
as $a=|a|{\mathbf p}\to0$, thus concluding the proof.
\end{proof}

From Theorem \ref{stima_teo_inversione} and scaling, it follows that, letting
$\alpha\in[0,2\pi)$, 
${\mathbf
  p}=(\cos\alpha,\sin\alpha)$, and $R>1$, 
\begin{equation}\label{eq:stscal}
\int_{\big(\frac{1}{|a|}\Omega\big)\setminus D_{R}}\bigg|(i\nabla+A_{\mathbf p})
 \Big(\tilde \varphi_a(x) - e^{\frac i2(\theta_{\mathbf p}-\theta_0^{\mathbf p})}
\tfrac{|a|^{k/2}}{\sqrt{H_{a,\delta}}}W_a\Big)\bigg|^2dx=O(1),
\quad\text{as }a=|a|{\mathbf p}\to0.  
\end{equation}

\begin{Theorem}\label{t:blowup}
  For $\alpha\in[0,2\pi)$, ${\mathbf p}=(\cos\alpha,\sin\alpha)$ and
  $a=|a|{\mathbf p}\in\Omega$, let $\varphi_a\in
  H^{1,a}_{0}(\Omega,\C)$ solve (\ref{eq:equation_a}-\ref{eq:6}) and
  $\varphi_0\in H^{1,0}_{0}(\Omega,\C)$ be a solution to
  \eqref{eq:equation_lambda0} satisfying \eqref{eq:1},
  \eqref{eq:37}, and \eqref{eq:54}.
 Let $\tilde\varphi_a$ and
  $K_\delta$ be as in \eqref{def_blowuppate_normalizzate},
  $\beta_2$ as in \eqref{eq:131}, and $\Psi_{\mathbf p}$
  be as in Proposition \ref{prop_Psi}.  Then
\begin{equation}\label{eq:58}
  \lim_{|a|\to 0^+}\frac{|a|^{k/2}}{\sqrt{H_{a,\delta}}}=
  \frac{\sqrt\pi}{|\beta_2|}
  \sqrt{\frac{K_\delta}{\int_{\partial D_{K_\delta}}|\Psi_{\mathbf p}|^2ds}}
\end{equation}
and
\begin{equation}\label{eq:59}
  \tilde \varphi_{a}\to \frac{\beta_2}{|\beta_2|}
  \sqrt{\frac{K_\delta}{\int_{\partial D_{K_\delta}}|\Psi_{\mathbf p}|^2ds}} \ \Psi_{\mathbf p} \quad\text{as
  }a=|a|{\mathbf p}\to0,
\end{equation}
in $H^{1 ,{\mathbf p}}(D_R,\C)$ for every $R>1$, almost everywhere and
in $C^{2}_{\rm loc}(\R^2\setminus\{{\mathbf p}\},\C)$.
\end{Theorem}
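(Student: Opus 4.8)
The strategy is to pass to the limit in the blow-up sequence $\tilde\varphi_a$ and identify the limit profile with (a constant multiple of) $\Psi_{\mathbf p}$, then use this identification to pin down the normalization constant via the boundary integral condition \eqref{eq:normaliz}. First, from the boundedness \eqref{eq:67} of $\{\tilde\varphi_a\}$ in $H^{1,{\mathbf p}}(D_R,\C)$ for every $R>1$, together with a diagonal argument over a sequence $R\to+\infty$, I would extract (along an arbitrary sequence $|a|\to 0^+$, which by standard arguments suffices) a weak limit $\Phi\in H^{1,{\mathbf p}}_{\rm loc}(\R^2,\C)$; elliptic estimates applied to \eqref{eq:3phiat}, whose right-hand side $|a|^2\lambda_a\tilde\varphi_a$ tends to $0$ locally uniformly away from ${\mathbf p}$, upgrade the convergence to strong $H^{1,{\mathbf p}}(D_R,\C)$ and $C^2_{\rm loc}(\R^2\setminus\{{\mathbf p}\},\C)$, and yield that $\Phi$ is magnetic-harmonic, i.e. $(i\nabla+A_{\mathbf p})^2\Phi=0$ in $\R^2$.

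\textbf{Identifying the limit.} The key point is that $\Phi$ satisfies the asymptotic condition \eqref{eq:17} characterizing $\Psi_{\mathbf p}$, up to a multiplicative constant. Indeed, \eqref{eq:vkext_la} gives $W_a\to\beta e^{\frac i2\theta_0}\psi_k$ in $H^{1,0}(D_R,\C)$, while \eqref{eq:stscal} controls $(i\nabla+A_{\mathbf p})\bigl(\tilde\varphi_a-e^{\frac i2(\theta_{\mathbf p}-\theta_0^{\mathbf p})}\tfrac{|a|^{k/2}}{\sqrt{H_{a,\delta}}}W_a\bigr)$ in $L^2$ uniformly on the exterior regions $\bigl(\tfrac1{|a|}\Omega\bigr)\setminus D_R$. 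Hence, \emph{provided} $\tfrac{|a|^{k/2}}{\sqrt{H_{a,\delta}}}$ converges to some finite nonzero constant $L$ along the subsequence (which follows from \eqref{eq:stima_sotto_radiceH}--\eqref{eq:stima_sopra_radiceH}, giving boundedness and positivity up to subsequences, with the lower bound requiring the choice of $\delta$ small), one gets $\tilde\varphi_a\to L\beta\,e^{\frac i2(\theta_{\mathbf p}-\theta_0^{\mathbf p})}e^{\frac i2\theta_0}\psi_k$ plus an $H^{1,{\mathbf p}}$-bounded exterior remainder, which forces $\Phi$ to satisfy \eqref{eq:16}--\eqref{eq:17} with $\psi_k$ replaced by $L\beta\psi_k$; by linearity and the uniqueness in Proposition \ref{prop_Psi}, $\Phi=L\beta\,\Psi_{\mathbf p}$. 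To fix $L$, I would test \eqref{eq:normaliz}: since $\tilde\varphi_a\to\Phi$ in $H^{1,{\mathbf p}}(D_{K_\delta+1},\C)$ and the trace is continuous, $\tfrac1{K_\delta}\int_{\partial D_{K_\delta}}|\Phi|^2\,ds=1$, whence $|L|^2|\beta|^2\int_{\partial D_{K_\delta}}|\Psi_{\mathbf p}|^2\,ds=K_\delta$, i.e. (recalling $\beta=\beta_2/\sqrt\pi$) $L=\tfrac{\sqrt\pi}{|\beta_2|}\bigl(\tfrac{K_\delta}{\int_{\partial D_{K_\delta}}|\Psi_{\mathbf p}|^2ds}\bigr)^{1/2}$, which is \eqref{eq:58}; the phase of $L\beta$ is determined to be $\beta_2/|\beta_2|$ times a positive number by the second normalization in \eqref{eq:6}, which (after passing to the limit in the corresponding rescaled inner product against $W_a$, using \eqref{eq:vkext_la}) forces the relevant Fourier coefficient to be a positive real, giving \eqref{eq:59}.

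\textbf{Uniqueness of the limit and removal of the subsequence.} Since the limit $\Phi=L\beta\,\Psi_{\mathbf p}$ and the constant $L$ are the same for every convergent subsequence, the full limits \eqref{eq:58} and \eqref{eq:59} hold as $a=|a|{\mathbf p}\to0$ without passing to subsequences; a Urysohn-type argument makes this rigorous.

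\textbf{Main obstacle.} The delicate point is establishing that $\tfrac{|a|^{k/2}}{\sqrt{H_{a,\delta}}}$ stays bounded \emph{and bounded away from zero} so that the limit profile is genuinely nontrivial and proportional to $\Psi_{\mathbf p}$: the upper bound \eqref{eq:stima_sopra_radiceH} gives $\tfrac{|a|^{k/2}}{\sqrt{H_{a,\delta}}}\lesssim |a|^{(k-1)/2+\delta}\to 0$, which is \emph{too weak}, so one cannot argue as in \cite{AF} via two-sided Rayleigh quotient estimates (here $\xi_{\mathbf p}(1)-\sqrt\pi$ may vanish). The resolution is precisely the new input \eqref{eq:estbr} coming from Lemma \ref{l:taylor}: combined with \eqref{eq:stima_sotto_radiceH} it yields $|\lambda_a-\lambda_0|=O(|a|^{k/2-\delta}\sqrt{H_{a,\delta}})=o(\sqrt{H_{a,\delta}})$ used in Theorem \ref{stima_teo_inversione}, and then the sharp energy comparison in \eqref{eq:stscal} — obtained from Theorem \ref{stima_teo_inversione} by scaling — is what controls the blow-up and lets one identify the limit. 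Thus the heart of the matter is to transfer the $H^{1,{\mathbf p}}$ bound \eqref{eq:67} and the exterior estimate \eqref{eq:stscal} into the conclusion that $\tfrac{|a|^{k/2}}{\sqrt{H_{a,\delta}}}$ converges to a \emph{finite nonzero} constant; once this is in hand, uniqueness of $\Psi_{\mathbf p}$ does the rest.
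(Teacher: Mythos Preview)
Your overall strategy matches the paper's: extract a weak limit $\tilde\Phi$ along subsequences using \eqref{eq:67}, show it solves $(i\nabla+A_{\mathbf p})^2\tilde\Phi=0$, use \eqref{eq:stscal} together with the uniqueness in Proposition~\ref{prop_Psi} to identify $\tilde\Phi = L\beta\,\Psi_{\mathbf p}$, and finally use \eqref{eq:normaliz} to pin down $L$. The Urysohn-type removal of subsequences at the end is also what the paper does.

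However, there is a genuine gap at the step you yourself flag as the main obstacle. Your parenthetical claim that finiteness and positivity of $L=\lim\frac{|a|^{k/2}}{\sqrt{H_{a,\delta}}}$ ``follow from \eqref{eq:stima_sotto_radiceH}--\eqref{eq:stima_sopra_radiceH}'' is wrong: \eqref{eq:stima_sotto_radiceH} only gives $\frac{|a|^{k/2}}{\sqrt{H_{a,\delta}}}\leq C_\delta^{-1/2}|a|^{-\delta}$, which blows up, while \eqref{eq:stima_sopra_radiceH} gives $\frac{|a|^{k/2}}{\sqrt{H_{a,\delta}}}\gtrsim |a|^{(k-1+2\delta)/2}\to 0$. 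Neither rules out $L=0$ or $L=+\infty$, and nothing else in your outline does either; you end by saying \eqref{eq:stscal} ``controls the blow-up'' without saying how.

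The paper supplies this missing step by a two-case contradiction argument based on \eqref{eq:stscal}. If $\frac{|a_n|^{k/2}}{\sqrt{H_{a_n,\delta}}}\to 0$, passing to the limit in \eqref{eq:stscal} (with \eqref{eq:vkext_la}) yields $\int_{\R^2\setminus D_R}|(i\nabla+A_{\mathbf p})\tilde\Phi|^2\,dx<\infty$; but $\tilde\Phi\not\equiv 0$ (by \eqref{eq:normaliz} and compactness of the trace) solves $(i\nabla+A_{\mathbf p})^2\tilde\Phi=0$, and a nontrivial such function cannot lie in $\mathcal D^{1,2}_{\mathbf p}(\R^2,\C)$. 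If instead $\frac{|a_n|^{k/2}}{\sqrt{H_{a_n,\delta}}}\to+\infty$, one multiplies \eqref{eq:stscal} by $H_{a,\delta}/|a|^k$, restricts to an annulus $D_{2R}\setminus D_R$, and uses \eqref{eq:67} and \eqref{eq:vkext_la} to obtain
\[
\frac{|a_n|^{k}}{H_{a_n,\delta}}\bigg(|\beta|^2\int_{D_{2R}\setminus D_R}|\nabla\psi_k|^2\,dx+o(1)\bigg)=O(1),
\]
which is impossible since $\int_{D_{2R}\setminus D_R}|\nabla\psi_k|^2\,dx>0$. This dichotomy argument is the idea you are missing.

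A minor point: once $L>0$ is known (as a limit of positive reals), the phase of $L\beta=L\beta_2/\sqrt\pi$ is automatically $\beta_2/|\beta_2|$; the second normalization in \eqref{eq:6} is not needed here.
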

\begin{proof}
{\bf Step 1.} We first prove that for every sequence
$a_n=|a_n|\mathbf p$ with $|a_n|\to 0$, there
exist $\tilde \Phi\in H^{1 ,{\mathbf p}}_{\rm loc}(\R^2,\C)$, $\tilde
\Phi\not\equiv 0$, and a
subsequence $a_{n_\ell}$ such that $\tilde
\varphi_{a_{n_\ell}}\to\tilde \Phi$ in $H^{1
  ,{\mathbf p}}(D_R,\C)$ for every $R>1$, almost everywhere and
in $C^{2}_{\rm loc}(\R^2\setminus\{{\mathbf p}\},\C)$ and 
$\tilde \Phi$ weakly solves 
\begin{equation}\label{eq:tildePhi}
(i\nabla +A_{\mathbf p})^2 \tilde\Phi =0, \quad \text{in }\R^2.
\end{equation}
To prove it, we observe that from \eqref{eq:67} it follows that, for every sequence
$a_n=|a_n|\mathbf p$ with $|a_n|\to 0$, by a diagonal process there
exists $\tilde \Phi\in H^{1 ,{\mathbf p}}_{\rm loc}(\R^2,\C)$, and a
subsequence $a_{n_\ell}$ such that $\tilde
\varphi_{a_{n_\ell}}\rightharpoonup \tilde \Phi$ weakly in $H^{1
  ,{\mathbf p}}(D_R,\C)$ for every $R>1$ and almost everywhere.
$\tilde \Phi\not\equiv 0$ since $\frac1{K_\delta}\int_{\partial
  D_{K_\delta}}|\tilde\Phi|^2\,ds=1$ thanks to \eqref{eq:normaliz} and
the compactness of the trace embedding $H^{1,{\mathbf
    p}}(D_{K_{\delta}},\C)\hookrightarrow L^2(\partial
D_{K_{\delta}},\C)$.

Passing to the limit in \eqref{eq:3phiat}, we have that
$\tilde \Phi$ weakly solves  \eqref{eq:tildePhi},
whereas, arguing as in the proof of \cite[Theorem 8.1]{AF},  we can
prove that the convergence of  the subsequence $\tilde \varphi_{a_{n_\ell}}$ to
$\tilde \Phi$ is actually strong in $H^{1 ,{\mathbf p}}(D_R,\C)$ for
every $R>1$. The convergence in $C^{2}_{\rm
  loc}(\R^2\setminus\{{\mathbf p}\},\C)$ follows easily from classical elliptic estimates.

\medskip\noindent
{\bf Step 2.} We claim that, for every sequence
$a_n=|a_n|{\mathbf p}$ with $|a_n|\to 0$, there
exists a
subsequence $a_{n_\ell}$ such that 
\[
 \lim_{\ell\to
   +\infty}\frac{|a_{n_\ell}|^{k/2}}{\sqrt{H_{a_{n_\ell},\delta}}}
\quad \text{is finite and strictly positive}.
\]
To prove the claim, we argue by contradiction, assuming that 
\begin{enumerate}[\rm(i)]
\item either there
exists a sequence $a_n=|a_n|{\mathbf p}$ with $|a_n|\to 0$ such that 
$\lim_{n\to
   +\infty}\frac{|a_{n}|^{k/2}}{\sqrt{H_{a_{n},\delta}}}=0$\\
\item 
 or  there
exists a sequence $a_n=|a_n|{\mathbf p}$ with $|a_n|\to 0$ such that 
$\lim_{n\to
   +\infty}\frac{|a_{n}|^{k/2}}{\sqrt{H_{a_{n},\delta}}}=+\infty$.
\end{enumerate}
If (i) holds, then, by step 1, along a subsequence, 
 $\tilde
\varphi_{a_{n_\ell}}\to\tilde \Phi$ in $H^{1
  ,{\mathbf p}}(D_R,\C)$ for every $R>1$, for some $\tilde
\Phi\not\equiv 0$ weakly solving \eqref{eq:tildePhi}. Then from 
 \eqref{eq:vkext_la}, passing to the limit in \eqref{eq:stscal} we
 would obtain that 
\begin{equation*}
 \int_{\R^2\setminus D_{R}}|(i\nabla+A_{\mathbf p})\tilde\Phi(x)|^2dx<+\infty,
\end{equation*} 
contradicting the fact that $\tilde \Phi\not\equiv 0$ is a non trivial
weak solution to \eqref{eq:tildePhi} (and so cannot have finite energy
otherwise by testing the equation we would get that $\tilde
\Phi\equiv 0$, see \cite[Proof of Proposition 4.3]{AF}). Hence case
(i) cannot occur.

If (ii) holds, then from  \eqref{eq:stscal} we would have, for all $R>2$ 
\begin{equation*}
\frac{|a|^{k}}{H_{a,\delta}}\int_{D_{2R}\setminus D_{R}}\bigg|(i\nabla+A_{\mathbf p})
 \Big(
\tfrac{\sqrt{H_{a,\delta}}}{|a|^{k/2}}
\tilde \varphi_a(x) - e^{\frac i2(\theta_{\mathbf p}-\theta_0^{\mathbf p})}
W_a\Big)\bigg|^2dx=O(1),
\quad\text{as }a=|a|{\mathbf p}\to0,  
\end{equation*}
and hence, in view of \eqref{eq:vkext_la} and \eqref{eq:67}, passing
to the limit along the sequence we would obtain that 
\begin{align*}
&\frac{|a_n|^{k}}{H_{a_n,\delta}}\bigg(\int_{D_{2R}\setminus D_{R}}\bigg|(i\nabla+A_{\mathbf p})
 \Big(
e^{\frac i2(\theta_{\mathbf p}-\theta_0^{\mathbf p})}
\beta e^{\frac i2\theta_0}\psi_k\Big)\bigg|^2dx+o(1)\bigg)\\
&\quad
=\frac{|a_n|^{k}}{H_{a_n,\delta}}\bigg(|\beta|^2\int_{D_{2R}\setminus
  D_{R}}|\nabla \psi_k|^2dx+o(1)\bigg)=O(1),
\quad\text{as }n\to+\infty,  
\end{align*}
which is not possibile 
if $\lim_{\ell\to
   +\infty}\frac{|a_{n_\ell}|^{k/2}}{\sqrt{H_{a_{n_\ell},\delta}}}=+\infty$
 as in case (ii), since $\int_{D_{2R}\setminus
  D_{R}}|\nabla \psi_k|^2dx>0$. Hence also case
(ii) cannot occur and the claim of step 2 is proved.

\medskip\noindent {\bf Step 3.} From steps 1 and 2, it follows that,
for every sequence $a_n=(|a_n|,0)=|a_n|\mathbf p$ with $|a_n|\to 0$,
there exist $c\in (0,+\infty)$, $\tilde \Phi\in H^{1 ,{\mathbf
    e}}_{\rm loc}(\R^2,\C)$ weakly solving \eqref{eq:tildePhi},
$\tilde \Phi\not\equiv0$, and a subsequence $a_{n_\ell}$ such that
$\lim_{\ell\to+\infty}\frac{|a_{n_\ell}|^{k/2}}{\sqrt{H_{a_{n_\ell},\delta}}}=c$
and $\tilde \varphi_{a_{n_\ell}} \to\tilde \Phi$ in $H^{1 ,{\mathbf
    p}}(D_R,\C)$ for every $R>1$ and in $C^{2}_{\rm
  loc}(\R^2\setminus\{{\mathbf p}\},\C)$.  Passing to the limit along
$a_{n_\ell}$ in \eqref{eq:stscal} and recalling \eqref{eq:vkext_la},
we obtain that, for every $R>2$,
\begin{equation*}
 \int_{\R^2\setminus D_{R}}\bigg|(i\nabla+A_{\mathbf p})
 \Big(\tilde\Phi(x) - c\beta 
e^{\frac i2(\theta_{\mathbf p}-\theta_0^{\mathbf p})} e^{\frac i2
  \theta_0}\psi_k\Big)\bigg|^2dx<+\infty,
\end{equation*}
where $\beta$ is defined in \eqref{eq:beta}. 
Hence from Proposition \ref{prop_Psi} we conclude that necessarily
\begin{equation}\label{eq:57}
\tilde\Phi=c\beta\Psi_{\mathbf p}.
\end{equation}
Since $\frac1{K_\delta}\int_{\partial
  D_{K_\delta}}|\tilde\Phi|^2\,ds=1$, from \eqref{eq:57} and the fact that $c$ is a positive
real number, it follows that 
$c=
\frac{1}{|\beta|}
\big(\frac{K_\delta}{\int_{\partial D_{K_\delta}}|\Psi_{\mathbf p}|^2ds}\big)^{1/2}$.
Hence we have that 
\[
\tilde \varphi_{a_{n_\ell}}\to 
\frac{\beta}{|\beta|}
\sqrt{\tfrac{K_\delta}{\int_{\partial D_{K_\delta}}|\Psi_{\mathbf p}|^2ds}}
\Psi_{\mathbf p}
\quad\text{ in }H^{1 ,{\mathbf p}}(D_R,\C)\text{  
 for
every $R>1$ and in }C^{2}_{\rm
  loc}(\R^2\setminus\{{\mathbf p}\},\C),
\] 
and 
\[
\frac{|a_{n_\ell}|^{k/2}}{\sqrt{H_{a_{n_\ell},\delta}} }\to \frac{1}{|\beta|}
\sqrt{\frac{K_\delta}{\int_{\partial D_{K_\delta}}|\Psi_{\mathbf p}|^2ds}}.
\]
Since the above limits depend neither on the sequence $\{a_{n}\}_n$ nor on the
  subsequence $\{a_{n_\ell}\}_\ell$, we conclude that the above
  convergences hold as $|a|\to 0^+$, thus proving \eqref{eq:58} and \eqref{eq:59}.
 \end{proof}

 \begin{remark}\label{rem:blow}
   Combining   \eqref{eq:58} and \eqref{eq:59} we deduce that 
\begin{equation*}
 \frac{\varphi_a(|a|x)}{|a|^{k/2}}\to \frac{\beta_2}{\sqrt\pi} \Psi_{\mathbf p} \quad\text{as
  }a=|a|{\mathbf p}\to0,
\end{equation*}
in $H^{1 ,{\mathbf p}}(D_R,\C)$ for every $R>1$ and
in $C^{2}_{\rm loc}(\R^2\setminus\{{\mathbf p}\},\C)$.
Furthermore, arguing as in \cite[Lemma 8.3]{AF}, from Theorem
\ref{t:blowup} we can deduce that, letting $Z_a^R$ as is
\eqref{eq:zar},  
\begin{equation*}
  Z_a^R\to \frac{\beta_2}{|\beta_2|}
  \sqrt{\frac{K_\delta}{\int_{\partial D_{K_\delta}}|\Psi_{\mathbf p}|^2ds}} \ z_R \quad\text{as
  }a=|a|{\mathbf p}\to0,
\end{equation*}
in $H^{1 ,0}(D_R,\C)$ for every $R>2$, where $z_R$ is the unique
solution to 
\begin{equation*}
 \begin{cases}
  (i\nabla +A_0)^2 z_R = 0, &\text{in }D_{R},\\
  z_R = e^{\frac{i}{2}(\theta_{0}^{\mathbf p}-\theta_{\mathbf p})}\Psi_{\mathbf p}, &\text{on }\partial D_{R}.
 \end{cases}
\end{equation*}
\end{remark}
Thanks to the convergences of blow-up sequences established in Theorem
\ref{t:blowup} and Remark \ref{rem:blow}, we can now follow closely
the arguments of \cite[Subsection 6.1, Lemma 9.1]{AF} thus obtaining
the following upper bound for the difference $\lambda_0-\lambda_a$.

\begin{Lemma}\label{l:stima_Lambda0_sopra}
 For $\alpha\in[0,2\pi)$ and $a=|a|(\cos\alpha,\sin\alpha)\in\Omega$, let 
$\lambda_a\in\R$ and $\varphi_a\in
H^{1,a}_{0}(\Omega,\C)$ solve (\ref{eq:equation_a}-\ref{eq:6})
and $\lambda_0\in\R$ and $\varphi_0\in
H^{1,0}_{0}(\Omega,\C)$ solve
\eqref{eq:equation_lambda0}.
If \eqref{eq:1} and
\eqref{eq:37} hold and \eqref{eq:54} is satisfied, then, for
$a=|a|(\cos\alpha,\sin\alpha)$ and ${\mathbf p}=(\cos\alpha,\sin\alpha)$,
\[
\limsup_{|a|\to 0}\frac{\lambda_0-\lambda_a}{|a|^k}\leq |\beta|^2 \ k
\sqrt\pi (\xi_{\mathbf p} (1)-\sqrt\pi ),
\]
with $\beta$ as in \eqref{eq:beta} and $\xi_{\mathbf p} (r)$ as defined in \eqref{eq:def_xi}.
\end{Lemma}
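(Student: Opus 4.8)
The strategy mirrors \cite[Subsection 6.1, Lemma 9.1]{AF}: use the functions $v_{j,R,a}$ constructed above (after Gram--Schmidt orthonormalization) as a test $n_0$-dimensional subspace in the minimax characterization \eqref{eq:91_la} of $\lambda_0$, i.e.
\[
\lambda_0 \le \max_{u\in \mathrm{span}\{v_{1,R,a},\dots,v_{n_0,R,a}\}\setminus\{0\}}
\frac{\int_\Omega |(i\nabla+A_0)u|^2\,dx}{\int_\Omega |u|^2\,dx}.
\]
Because $v_{j,R,a}^{ext}=e^{\frac i2(\theta_0^a-\theta_a)}\varphi_j^a$ solves $(i\nabla+A_0)^2 v^{ext}=\lambda_j^a v^{ext}$ outside $D_{R|a|}$, while $v_{j,R,a}^{int}$ is $A_0$-harmonic inside $D_{R|a|}$, the leading order of the quotient is governed by $\lambda_{n_0}^a=\lambda_a$ plus a boundary term on $\partial D_{R|a|}$ coming from the mismatch of normal derivatives of $v_{n_0,R,a}^{int}$ and $v_{n_0,R,a}^{ext}$. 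The off-diagonal contributions of the other $v_{j,R,a}$, $j<n_0$, together with the errors introduced by Gram--Schmidt, are lower order thanks to Theorem \ref{stima_teo_inversione} (which gives $\|v_{n_0,R,a}-\varphi_0\|=O(\sqrt{H_{a,\delta}})$ and, similarly, smallness of the overlaps $\int_\Omega v_{j,R,a}\overline{v_{\ell,R,a}}$ for $j\neq \ell$) and the simplicity of $\lambda_0$.

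First I would expand the Rayleigh quotient on the test subspace and isolate, as in \cite[Lemma 9.1]{AF}, the dominant term
\[
\lambda_0-\lambda_a \le -\,\Im\!\int_{\partial D_{R|a|}}
\big((i\nabla+A_0)(v_{n_0,R,a}^{int}-v_{n_0,R,a}^{ext})\cdot\nu\big)\,\overline{v_{n_0,R,a}^{ext}}\;ds \;+\; \text{l.o.t.},
\]
with the lower-order terms controlled by $o(H_{a,\delta})+O(\sqrt{H_{a,\delta}}\,|\lambda_a-\lambda_0|)=o(|a|^k)$ using \eqref{eq:estbr}, \eqref{eq:stima_sopra_radiceH}, and Theorem \ref{stima_teo_inversione}. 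Next I would rescale $x\mapsto |a|x$ in this boundary integral: on $\partial D_R$, by \eqref{eq:89} and the definitions, $v_{n_0,R,a}^{ext}(|a|\cdot)=e^{\frac i2(\theta_0^{\mathbf p}-\theta_{\mathbf p})}\varphi_a(|a|\cdot)$, which after division by $|a|^{k/2}$ converges to $\frac{\beta_2}{\sqrt\pi}\,e^{\frac i2(\theta_0^{\mathbf p}-\theta_{\mathbf p})}\Psi_{\mathbf p}$ by Remark \ref{rem:blow}, while the scaled interior function $Z_a^R\cdot|a|^{-k/2}\sqrt{H_{a,\delta}}$, by \eqref{eq:58}--\eqref{eq:59} and Remark \ref{rem:blow}, converges to $\frac{\beta_2}{\sqrt\pi}\,z_R$, where $z_R$ solves the limit interior problem with boundary datum $e^{\frac i2(\theta_0^{\mathbf p}-\theta_{\mathbf p})}\Psi_{\mathbf p}$ on $\partial D_R$. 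Passing to the limit $|a|\to0$ in the rescaled boundary integral therefore yields
\[
\limsup_{|a|\to0}\frac{\lambda_0-\lambda_a}{|a|^k}\le
-|\beta|^2\,\Im\!\int_{\partial D_R}\big((i\nabla+A_{\mathbf p})(z_R-\Psi_{\mathbf p})\cdot\nu\big)\,\overline{e^{\frac i2(\theta_0^{\mathbf p}-\theta_{\mathbf p})}\Psi_{\mathbf p}}\;ds,
\]
with $\beta=\beta_2/\sqrt\pi$ as in \eqref{eq:beta}.

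Finally I would identify this $R$-dependent boundary quantity and let $R\to+\infty$. Since $z_R$ is $A_0$-harmonic inside $D_R$ and agrees with $\Psi_{\mathbf p}$ (up to the gauge $e^{\frac i2(\theta_0^{\mathbf p}-\theta_{\mathbf p})}$) on $\partial D_R$, and $\Psi_{\mathbf p}$ is $A_{\mathbf p}$-harmonic, the difference can be re-expressed, via a Green/integration-by-parts identity in $D_R$ and the asymptotics of $\Psi_{\mathbf p}$ from Remark \ref{rem:dec_infty}, in terms of the Fourier coefficient $\xi_{\mathbf p}(r)$ of $\Psi_{\mathbf p}$ against $\psi_2^k$; this is precisely the computation carried out for the lower bound in Lemmas \ref{l:upsilon}--\ref{l:lim_tilde_kappa_R}, run in the opposite direction. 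Concretely, one gets that the right-hand side above equals $|\beta|^2\,k\sqrt\pi\,(\xi_{\mathbf p}(1)-\sqrt\pi)+o(1)$ as $R\to+\infty$, using that $w_R\to\Psi_{\mathbf p}$ and $z_R\to\Psi_{\mathbf p}$ in $H^{1,\cdot}(D_r)$ for fixed $r$ and that the angular mode of $\Psi_{\mathbf p}-e^{\frac i2(\theta_{\mathbf p}-\theta_0^{\mathbf p})}e^{\frac i2\theta_0}\psi_k$ decays like $r^{-k/2}$.

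\medskip
\noindent The step I expect to be the main obstacle is the careful bookkeeping in the first part: showing that \emph{all} error terms in the minimax expansion on the test subspace — the Gram--Schmidt corrections, the cross terms involving $\varphi_j^a$ for $j\ne n_0$, and the interaction between the interior and exterior pieces away from the leading boundary integral — are genuinely $o(|a|^k)$. The delicate point, already flagged in the text before Theorem \ref{stima_teo_inversione}, is that $\xi_{\mathbf p}(1)-\sqrt\pi$ may vanish or change sign, so one cannot simply invoke $|\lambda_a-\lambda_0|=O(H_{a,\delta})$ as in \cite{AF}; instead one must use $|\lambda_a-\lambda_0|=O(|a|^{k/2-\delta}\sqrt{H_{a,\delta}})=o(\sqrt{H_{a,\delta}})$ together with $H_{a,\delta}=O(|a|^{1-2\delta})$ from \eqref{eq:stima_sopra_radiceH} and \eqref{eq:stima_sotto_radiceH}, and keep track of powers of $|a|$ throughout so that the final error is $o(|a|^k)$ for $\delta$ small. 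Once this bookkeeping is in place, the limit passage and the $R\to+\infty$ identification are routine adaptations of \cite[Lemma 6.7, Lemma 9.1]{AF} via Lemmas \ref{l:upsilon} and \ref{l:lim_tilde_kappa_R}.
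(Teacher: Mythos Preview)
Your plan is essentially the paper's own argument: the paper gives no detailed proof for this lemma, stating only that with the blow-up convergences of Theorem~\ref{t:blowup} and Remark~\ref{rem:blow} in hand one ``follow[s] closely the arguments of \cite[Subsection~6.1, Lemma~9.1]{AF},'' which is exactly the minimax-with-$v_{j,R,a}$ scheme, rescaling on $\partial D_{R|a|}$, and $R\to\infty$ identification you outline, including the use of \eqref{eq:estbr} in place of the unavailable $|\lambda_a-\lambda_0|=O(H_{a,\delta})$. The only cosmetic slip is that $z_R$ is $A_0$-harmonic, so the limiting boundary integral should be written with $(i\nabla+A_0)z_R$ (equivalently with $A_{\mathbf p}$ after inserting the gauge factor $e^{\frac i2(\theta_0^{\mathbf p}-\theta_{\mathbf p})}$), but this does not affect the argument.
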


\bigskip

Collecting \eqref{eq:3.1} and Lemma \ref{l:stima_Lambda0_sopra}
we can state the following result.

\begin{Proposition}\label{p:primopasso}
 For $\alpha\in[0,2\pi)$ and $a=|a|(\cos\alpha,\sin\alpha)\in\Omega$, let 
$\varphi_a\in
H^{1,a}_{0}(\Omega,\C)$ and $\lambda_a\in\R$ solve (\ref{eq:equation_a}-\ref{eq:6})
and $\lambda_0\in\R$ and $\varphi_0\in
H^{1,0}_{0}(\Omega,\C)$ solve
\eqref{eq:equation_lambda0}.
If \eqref{eq:1} and
\eqref{eq:37} hold and \eqref{eq:54} is satisfied, then, for
$a=|a|(\cos\alpha,\sin\alpha)$,
\[
\lim_{|a|\to 0}\frac{\lambda_0-\lambda_a}{|a|^k}= |\beta|^2 \ k \sqrt\pi f(\alpha),
\]
where
\begin{equation}\label{eq:f(alpha)}
  f:[0,2\pi)\to\R,\quad f(\alpha)= (\xi_{\mathbf p} (1)-\sqrt\pi
  ),\quad
  {\mathbf p}=(\cos\alpha,\sin\alpha),
\end{equation}
with $\beta$ as in \eqref{eq:beta} and $\xi_{\mathbf p}(r)$ as defined in \eqref{eq:def_xi}.
\end{Proposition}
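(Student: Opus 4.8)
The plan is to obtain the stated limit by squeezing it between the lower bound already recorded in \eqref{eq:3.1} and the upper bound of Lemma \ref{l:stima_Lambda0_sopra}, exploiting the fact that both bounds carry the \emph{same} limiting value $|\beta|^2\,k\sqrt\pi\,(\xi_{\mathbf p}(1)-\sqrt\pi)$, with ${\mathbf p}=(\cos\alpha,\sin\alpha)$ fixed.

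First I would divide \eqref{eq:3.1} by $|a|^k$: since $\lambda_0-\lambda_a\geq |a|^k\,k|\beta|^2\sqrt\pi\,(\xi_{\mathbf p}(1)-\sqrt\pi+o(1))$ as $a=|a|{\mathbf p}\to0$, where the $o(1)$ is genuinely infinitesimal along the fixed direction ${\mathbf p}$, passing to the $\liminf$ yields
\[
\liminf_{|a|\to0}\frac{\lambda_0-\lambda_a}{|a|^k}\;\geq\; |\beta|^2\,k\sqrt\pi\,\big(\xi_{\mathbf p}(1)-\sqrt\pi\big).
\]
Next I would invoke Lemma \ref{l:stima_Lambda0_sopra}, which directly gives
\[
\limsup_{|a|\to0}\frac{\lambda_0-\lambda_a}{|a|^k}\;\leq\; |\beta|^2\,k\sqrt\pi\,\big(\xi_{\mathbf p}(1)-\sqrt\pi\big).
\]
Combining the two inequalities shows $\liminf=\limsup$, so the limit exists and equals $|\beta|^2\,k\sqrt\pi\,f(\alpha)$ with $f(\alpha)=\xi_{\mathbf p}(1)-\sqrt\pi$. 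Finally, since $\lambda_0-\lambda_a\in\R$ and $|a|^k>0$, this common value is real, which confirms that $f$ as in \eqref{eq:f(alpha)} is indeed real-valued (alternatively, this reality follows from the reality property of $e^{-i\theta_0/2}\Psi_{\mathbf p}$ underlying \eqref{eq:def_xi}).

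No genuine obstacle remains at this stage: all the analytic difficulty has already been absorbed into the preceding results — the construction of the limit profile $\Psi_{\mathbf p}$ in Proposition \ref{prop_Psi}, the fine blow-up analysis of Theorem \ref{t:blowup} (itself resting on the monotonicity-formula estimates \eqref{eq:stima_sotto_radiceH}--\eqref{eq:stima_sopra_radiceH}), and the Rayleigh-quotient test-function arguments behind Lemmas \ref{l:stima_Lambda0_sotto} and \ref{l:stima_Lambda0_sopra}. Thus the proof of the Proposition is merely the two-sided comparison above; the only point worth a line of care is checking that the $o(1)$ term in \eqref{eq:3.1} does not interfere with the $\liminf$ and that both bounds are phrased with the same ${\mathbf p}=(\cos\alpha,\sin\alpha)$, which is the case.
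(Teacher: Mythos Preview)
Your argument is correct and matches the paper's own proof: the Proposition is obtained precisely by combining the lower bound \eqref{eq:3.1} with the upper bound of Lemma \ref{l:stima_Lambda0_sopra}, which share the common limiting value $|\beta|^2 k\sqrt\pi(\xi_{\mathbf p}(1)-\sqrt\pi)$. Your additional remark on the reality of $f(\alpha)$ is a harmless extra observation.
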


\section{Properties of $f(\alpha)$}\label{sec:properties-falpha} 

To prove our main result, we are going to investigate two suitable symmetry properties 
of the function $f(\alpha)$. 
Let us define two transformations $\mathcal R_1, \mathcal R_2$ acting on a general point 
\[
x=(x_1,x_2)=(r\cos t, r\sin t), \quad r>0,\ t\in[0,2\pi), 
\]
as 
\begin{equation}
\label{eq:trasformazioni1}
 \mathcal R_1 (x)= \mathcal R_1 (x_1,x_2) = 
{M}_k
 \begin{pmatrix}
   x_1\\
x_2
 \end{pmatrix},\quad
{M}_k=
 \begin{pmatrix}
   \cos\frac{2\pi}{k}&   -\sin\frac{2\pi}{k}\\[3pt]
   \sin\frac{2\pi}{k}&   \cos\frac{2\pi}{k}
 \end{pmatrix}
\end{equation}
i.e.
\[
\mathcal R_1 (r\cos t, r\sin t) = \Big(r\cos (t+\tfrac{2\pi}k), r\sin (t+\tfrac{2\pi}k)\Big),
\]
and 
\begin{equation}
\label{eq:trasformazioni2}
 \mathcal R_2 (x)= \mathcal R_2 (x_1,x_2) = (x_1,-x_2),
\end{equation}
i.e.
\[
\mathcal R_2 (r\cos t, r\sin t) = (r\cos (2\pi-t), r\sin (2\pi-t)),
\]
The transformation $\mathcal R_1$ is a rotation of $\frac{2\pi}k$ and
${\mathcal R}_2$ is a reflexion through the $x_1$-axis.

We would like to study how the coefficient
$\xi_{\mathbf p}(1)$ (see \eqref{eq:def_xi}) changes when the above trasformations
act on ${\mathbf p}$. In particular, we are
going  to prove that
such a quantity $\xi_{\mathbf p}(1)$ is invariant under the transformations
$\mathcal R_1,\mathcal R_2$.

In order to obtain such an invariance, we first study the
relation between the limit profiles $\Psi_{\mathbf p}(\mathcal R_j (x))$
and $\Psi_{\mathcal R_j^{-1}({\mathbf p})}(x)$, $j=1,2$.

\begin{Lemma}\label{l:relazionePsi}
 For  ${\mathbf p}=(\cos\alpha,\sin\alpha)$, $\alpha\in[0,2\pi)$, 
let $\Psi_{\mathbf p}$ be the limit profile introduced in Proposition
\ref{prop_Psi} and let $\mathcal R_1,\mathcal R_2$ be the transformations
introduced in \eqref{eq:trasformazioni1} and \eqref{eq:trasformazioni2}.
 Then 
 \begin{equation}\label{eq:relazionePsi1}
\Psi_{\mathcal R_1^{-1}({\mathbf p})}=-e^{-i\frac\pi k }
\big(\Psi_{\mathbf p}\circ \mathcal R_1\big)
\end{equation}
and 
 \begin{equation}\label{eq:relazionePsi2}
\Psi_{\mathcal R_2({\mathbf p})}=-e^{i\theta_{\mathcal R_2({\mathbf
      p})}}
\big(\Psi_{\mathbf p}\circ \mathcal R_2\big).
\end{equation}
\end{Lemma}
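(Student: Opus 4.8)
The strategy is to verify that the right-hand sides of \eqref{eq:relazionePsi1} and \eqref{eq:relazionePsi2} satisfy the two characterizing properties of $\Psi_{\mathcal R_1^{-1}({\mathbf p})}$ and $\Psi_{\mathcal R_2({\mathbf p})}$ respectively, namely that they are magnetic-harmonic for the appropriate pole in the sense of \eqref{eq:16}, and that they differ from the corresponding explicit model function $e^{\frac i2(\theta_{\mathbf q}-\theta_0^{\mathbf q})}e^{\frac i2\theta_0}\psi_k$ (with ${\mathbf q}=\mathcal R_1^{-1}({\mathbf p})$ or ${\mathbf q}=\mathcal R_2({\mathbf p})$) by something of finite magnetic Dirichlet energy outside a large disk, as in \eqref{eq:17}. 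Once both properties are checked, uniqueness in Proposition \ref{prop_Psi} forces the claimed identities. The first property is the easy one: $\mathcal R_1$ is a rotation and $\mathcal R_2$ an orthogonal reflection, so composition with $\mathcal R_j$ maps solutions of $(i\nabla+A_{\mathbf q})^2u=0$ to solutions of $(i\nabla+A_{\mathcal R_j({\mathbf q})})^2(\cdot)=0$ up to the gauge phases appearing in front; multiplication by a smooth unimodular function away from the pole changes the gauge and is absorbed. (For $\mathcal R_2$ one uses in addition that $A_{\mathbf p}$ is odd under the reflection, so that composition with $\mathcal R_2$ sends $A_{\mathbf p}$-magnetic-harmonicity to $A_{\mathcal R_2({\mathbf p})}$-magnetic-harmonicity after complex conjugation is \emph{not} needed here because the phase correction $e^{i\theta_{\mathcal R_2({\mathbf p})}}$ already does the bookkeeping; this is the point to be careful about.)

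The substantive work is the bookkeeping of the angular phases on both sides. I would first record how the auxiliary angle functions transform: from the definitions \eqref{eq:4th}--\eqref{eq:tildetheta} one has, for $\mathcal R_1$ a rotation by $\frac{2\pi}k$, the identities $\theta_{\mathcal R_1^{-1}({\mathbf p})}(x)=\theta_{\mathbf p}(\mathcal R_1 x)-\frac{2\pi}k$ (mod $2\pi$, with the correct branch), and similarly for $\theta_0^{\mathbf q}$ and $\theta_0$, keeping careful track of the $2\pi$ jumps and of the constant shift by $\frac{2\pi}{k}$ which, after multiplication by $\frac i2$ inside the exponential, produces exactly the factor $e^{-i\pi/k}$; the extra global sign $-1$ in \eqref{eq:relazionePsi1} comes from $\psi_k(\mathcal R_1 x)=(-1)\,\psi_k(x)$ when $k$ is odd, since $\sin\!\big(\frac k2(t+\frac{2\pi}k)\big)=\sin\!\big(\frac k2 t+\pi\big)=-\sin\!\big(\frac k2 t\big)$. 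For $\mathcal R_2$ one uses $\theta_0(\mathcal R_2 x)=2\pi-\theta_0(x)$ (off the positive axis) and the analogous reflection formulas for $\theta_{\mathbf p}$, $\theta_0^{\mathbf p}$; here $\psi_k(\mathcal R_2 x)=-\psi_k(x)$ as well, and assembling the phases $e^{\frac i2(\cdot)}$ under $t\mapsto 2\pi-t$ produces precisely the correction factor $-e^{i\theta_{\mathcal R_2({\mathbf p})}}$. I would carry out this computation on the explicit model function $e^{\frac i2(\theta_{\mathbf p}-\theta_0^{\mathbf p})}e^{\frac i2\theta_0}\psi_k$ first, since that is where the phase algebra is transparent, and then note that the error term $\Psi_{\mathbf p}-(\text{model})$ has finite energy outside $D_r$ by \eqref{eq:17} and that finite magnetic energy is preserved under the isometries $\mathcal R_j$ (the Jacobian is $1$ and orthogonal changes of variable commute with the magnetic gradient up to the same gauge phases), so \eqref{eq:17} transfers to the transformed functions.

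The main obstacle is purely the branch-cut accounting: the functions $\theta_b$, $\theta_0^b$, $\theta_0$ are defined with specific ranges and specific half-line discontinuities $s_b$, $s_0^b$, and under $\mathcal R_1$, $\mathcal R_2$ these half-lines move, so the naive substitution formulas hold only up to locally constant $2\pi$-valued corrections supported on thin sets; one has to check that, in every region, the $2\pi$ ambiguities in the \emph{separate} angle functions cancel in the relevant \emph{combinations} $\theta_{\mathbf p}-\theta_0^{\mathbf p}$ and $\theta_0^{\mathbf p}-\theta_0$ (which by \eqref{eq:diff} is already a clean step function), so that the exponentials $e^{\frac i2(\cdots)}$ are genuinely continuous and the claimed identities hold as \emph{equalities of functions} rather than merely up to a sign that could jump. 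This is routine but delicate, and is the only place real care is needed; everything else — magnetic harmonicity, finite-energy remainder, invocation of uniqueness — is a direct consequence of Proposition \ref{prop_Psi} and the invariance of the magnetic Dirichlet form under orthogonal maps.
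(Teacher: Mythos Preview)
Your plan is exactly the paper's proof: show that the candidate functions on the right of \eqref{eq:relazionePsi1}--\eqref{eq:relazionePsi2} satisfy both defining conditions \eqref{eq:16} and \eqref{eq:17} of Proposition~\ref{prop_Psi} (magnetic harmonicity for the new pole, and finite-energy defect from the model $e^{\frac i2(\theta_{\mathbf q}-\theta_0^{\mathbf q}+\theta_0)}\psi_k$ outside a disk), then conclude by uniqueness; the paper carries out precisely the phase/branch bookkeeping you describe via the identities \eqref{eq:2}--\eqref{eq:8} and \eqref{eq:2-2}--\eqref{eq:5-2}.

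One small correction in your sketch: for the reflection you assert $\psi_k(\mathcal R_2 x)=-\psi_k(x)$, but in fact $\psi_k\circ\mathcal R_2=\psi_k$, since for odd $k$ one has $\sin\!\big(\tfrac k2(2\pi-t)\big)=\sin\!\big(k\pi-\tfrac k2 t\big)=\sin\!\big(\tfrac k2 t\big)$ (this is the paper's \eqref{eq:3-2}). The overall minus sign in \eqref{eq:relazionePsi2} comes instead from $e^{\frac i2\theta_0(\mathcal R_2 y)}=-e^{-\frac i2\theta_0(y)}$ away from the positive $x_1$-axis (the paper's \eqref{eq:5-2}), combined with the gauge factor $e^{i\theta_{\mathcal R_2(\mathbf p)}}$ needed to flip $(i\nabla-A_{\mathcal R_2(\mathbf p)})$ back to $(i\nabla+A_{\mathcal R_2(\mathbf p)})$. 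This does not affect your strategy, only the line-by-line phase accounting you already flag as the delicate step.
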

\begin{proof}
In order to prove \eqref{eq:relazionePsi1}, we observe that, by direct
calculations,
\begin{align}\label{eq:2}
&  \big(A_{\mathbf p}\circ{\mathcal R}_1\big)(x)=A_{\mathcal
    R_1^{-1}({\mathbf p})}(x)M_k^{-1},\\
\label{eq:3}
&e^{\frac i2(\theta_0\circ\mathcal R_1)}(\psi_k\circ\mathcal
R_1)=-e^{i\frac\pi k}e^{\frac i2\theta_0}\psi_k,\\
\label{eq:4}
&e^{\frac i2(\theta_0\circ\mathcal R_1)(x)}\nabla\psi_k(\mathcal
R_1(x))=-e^{i\frac\pi k}e^{\frac i2\theta_0 (x)}\nabla
\psi_k(x)M_k^{-1}.
\end{align}
Furthermore 
\[\theta_{\mathbf p}(\mathcal R_1(x))=
\begin{cases}
  \theta_{\mathcal R_1^{-1}({\mathbf p})}(x)+\frac{2\pi}k,\quad
  \text{if }\alpha\in \big[\frac{2\pi}k,2\pi\big),\\
  \theta_{\mathcal R_1^{-1}({\mathbf p})}(x)+\frac{2\pi}k-2\pi,\quad
  \text{if }\alpha\in \big[0,\frac{2\pi}k\big),
\end{cases}
\]
and
\[
\theta_0^{\mathbf p}(\mathcal R_1(x))=
\begin{cases}
  \theta_0^{\mathcal R_1^{-1}({\mathbf p})}(x)+\frac{2\pi}k,\quad
  \text{if }\alpha\in \big[\frac{2\pi}k,2\pi\big),\\
  \theta_0^{\mathcal R_1^{-1}({\mathbf p})}(x)+\frac{2\pi}k-2\pi,\quad
  \text{if }\alpha\in \big[0,\frac{2\pi}k\big),
\end{cases}
\]
so that 
\begin{equation}\label{eq:8}
  \theta_{\mathcal R_1^{-1}({\mathbf p})}-\theta_0^{\mathcal
    R_1^{-1}({\mathbf p})}=
\theta_{\mathbf p}\circ \mathcal R_1-\theta_0^{\mathbf p}\circ \mathcal R_1.
\end{equation}
Let us denote $\widetilde\Psi_{\mathbf p}(y)=\Psi_{\mathbf p}(\mathcal
R_1(y))$. By direct calculations we have that, since $\Psi_{\mathbf
  p}$ weakly  solves the equation $(i\nabla +A_{\mathbf p})^2\Psi_{\mathbf
  p}=0$, the function $\widetilde\Psi_{\mathbf p}$ solves 
$(i\nabla +(A_{\mathbf p}\circ\mathcal R_1)M_k)^2\widetilde\Psi_{\mathbf
  p}=0$ and hence, in view of \eqref{eq:2}, 
\begin{equation}\label{eq:12tras}
(i\nabla +
A_{\mathcal
    R_1^{-1}({\mathbf p})})^2\widetilde\Psi_{\mathbf
  p}=0,\quad\text{ in $\R^2$ in a
  weak $H^{1 ,\mathcal R_1^{-1}({\mathbf p})}$-sense}.
\end{equation}
Passing to the limit in \eqref{eq:stscal} and taking into account
\eqref{eq:vkext_la} and Theorem \ref{t:blowup}, we obtain that, for
all $R>1$, 
\begin{multline}\label{eq:11}
  \int_{\R^2\setminus D_{R}}\bigg|(i\nabla+A_{\mathbf p})
  \Big(\Psi_{\mathbf p} - e^{\frac i2(\theta_{\mathbf
      p}-\theta_0^{\mathbf p}+\theta_0)}\psi_k\Big)\bigg|^2dx\\
  =\int_{\R^2\setminus D_{R}}\bigg|(i\nabla+A_{\mathbf p})
  \Psi_{\mathbf p} - e^{\frac i2(\theta_{\mathbf p}-\theta_0^{\mathbf
      p}+\theta_0)}i\nabla\psi_k\bigg|^2dx <+\infty.
\end{multline}
By the change of variable $x=\mathcal R_1(y)$ in the above integral,
using \eqref{eq:2}, \eqref{eq:4}, and \eqref{eq:8} we obtain that 
\begin{gather}\label{eq:13}
  \int_{\R^2\setminus D_{R}}\bigg|(i\nabla+A_{\mathbf p})
  \Psi_{\mathbf p} (x)- e^{\frac i2(\theta_{\mathbf
      p}-\theta_0^{\mathbf
      p}+\theta_0)(x)}i\nabla\psi_k(x)\bigg|^2dx\\
  \notag= \int_{\R^2\setminus D_{R}}\bigg|(i\nabla+A_{\mathcal
    R_1^{-1}({\mathbf p})} ) \widetilde \Psi_{\mathbf p} (y)+e^{\frac
    ik\pi} e^{\frac i2\big(\theta_{\mathcal R_1^{-1}(\mathbf
      p)}-\theta_0^ {\mathcal R_1^{-1}(\mathbf
      p)}+\theta_0\big)(y)}i\nabla\psi_k(y)\bigg|^2dy\\
  \notag= \int_{\R^2\setminus D_{R}}\bigg|(i\nabla+A_{\mathcal
    R_1^{-1}({\mathbf p})} ) \big(-e^{-\frac ik\pi} \widetilde
  \Psi_{\mathbf p}\big) (y)- e^{\frac i2\big(\theta_{\mathcal
      R_1^{-1}(\mathbf p)}-\theta_0^ {\mathcal R_1^{-1}(\mathbf
      p)}+\theta_0\big)(y)}i\nabla\psi_k(y)\bigg|^2dy<+\infty.
\end{gather}
From \eqref{eq:12tras}, \eqref{eq:13} and Proposition
\ref{prop_Psi} we conclude that 
\[
-e^{-\frac ik\pi} \widetilde
  \Psi_{\mathbf p}=
\Psi_{\mathcal
    R_1^{-1}({\mathbf p})}
\]
thus proving \eqref{eq:relazionePsi1}.

To prove \eqref{eq:relazionePsi2}, we first observe that direct
calculations yield
\begin{align}\label{eq:2-2}
&  \big(A_{\mathcal R_2({\mathbf p})}\circ{\mathcal
  R}_2\big)M^{-1}=-A_{\mathbf p},\quad\text{where $M=
\begin{pmatrix}
  1&0\\
0&-1
\end{pmatrix}
$},\\
\label{eq:3-2}
&\psi_k\circ\mathcal
R_2=\psi_k,\quad \nabla\psi_k(\mathcal
R_2(x))=\nabla
\psi_k(x)M^{-1}.
\end{align}
Moreover
\begin{align*}
&\theta_{\mathbf p}(\mathcal R_2(x))=
\begin{cases}
  4\pi-\theta_{\mathcal R_2({\mathbf p})}(x),\quad
  \text{if }\theta_{\mathbf p}(\mathcal R_2(x))\in
  \big(\alpha,\alpha+2\pi\big),\\
  2\pi-\theta_{\mathcal R_2({\mathbf p})}(x),\quad
  \text{if }\theta_{\mathbf p}(\mathcal R_2(x))=\alpha,
\end{cases}\quad\text{if }\alpha\in(0,2\pi),\\
&\theta_{\mathbf p}(\mathcal R_2(x))=
\begin{cases}
  2\pi-\theta_{\mathcal R_2({\mathbf p})}(x)=2\pi-\theta_{{\mathbf p}}(x),&
  \text{if }\theta_{\mathbf p}(\mathcal R_2(x))\in
  \big(0,2\pi\big),\\
  -\theta_{\mathcal R_2({\mathbf p})}(x)=0,&
  \text{if }\theta_{\mathbf p}(\mathcal R_2(x))=0,
\end{cases}\quad\text{if }\alpha=0,
\end{align*}
and 
\begin{align*}
&\theta_0^{\mathbf p}(\mathcal R_2(x))=
\begin{cases}
  4\pi-\theta_0^{\mathcal R_2({\mathbf p})}(x),\quad
  \text{if }\theta_0^{\mathbf p}(\mathcal R_2(x))\in
  \big(\alpha,\alpha+2\pi\big),\\
  2\pi-\theta_0^{\mathcal R_2({\mathbf p})}(x),\quad
  \text{if }\theta_0^{\mathbf p}(\mathcal R_2(x))=\alpha,
\end{cases}\quad\text{if }\alpha\in(0,2\pi),\\
&\theta_0^{\mathbf p}(\mathcal R_2(x))=\theta_0(\mathcal R_2(x))=
\begin{cases}
2\pi-\theta_0(x),&
  \text{if }\theta_0(x)\in
  \big(0,2\pi\big),\\
  -\theta_0(x)=0,&
  \text{if }\theta_0(x)=0,
\end{cases}\quad\text{if }\alpha=0,
\end{align*}
so that 
\begin{equation}\label{eq:8-2}
\theta_0^{\mathcal
    R_2({\mathbf p})}-  \theta_{\mathcal R_2({\mathbf p})}=
\theta_{\mathbf p}\circ \mathcal R_2-\theta_0^{\mathbf p}\circ
\mathcal R_2,\quad \text{in }\R^2\setminus\{t{\mathbf p}:\, t\in[0,1]\},
\end{equation}
and 
\begin{equation}\label{eq:5-2}
  e^{\frac i2 \theta_0(\mathcal R_2 (y))}=
 - e^{-\frac i2 \theta_0(y)},\quad \text{in }\R^2\setminus \{(x_1,0): x_1\geq 0\}.
\end{equation}

Let us denote $\widehat\Psi_{\mathbf p}(y)=-e^{i\theta_{\mathcal
    R_2(\mathbf p)}}\Psi_{\mathbf p}(\mathcal
R_2(y))$. In view of \eqref{eq:2-2}, it is easy to verify that  $\widehat\Psi_{\mathbf p}$ solves 
\begin{equation}\label{eq:12tras-2}
(i\nabla +
A_{\mathcal
    R_2({\mathbf p})})^2\widehat\Psi_{\mathbf
  p}=0,\quad\text{ in $\R^2$ in a
  weak $H^{1 ,\mathcal R_2({\mathbf p})}$-sense}.
\end{equation}
By the change of variable $x=\mathcal R_2(y)$ in the integral \eqref{eq:11},
using \eqref{eq:2-2}, \eqref{eq:3-2}, \eqref{eq:8-2}, and \eqref{eq:5-2} and observing
that, by \eqref{eq:diff}, 
$e^{-i\big(\theta_0^{\mathcal R_2(\mathbf
      p)}-\theta_0\big)}\equiv1$ in $\R^2\setminus D_{R}$,

we obtain that 
\begin{gather}\label{eq:13-2}
  \int_{\R^2\setminus D_{R}}\bigg|(i\nabla+A_{\mathbf p})
  \Psi_{\mathbf p} (x)- e^{\frac i2(\theta_{\mathbf
      p}-\theta_0^{\mathbf
      p}+\theta_0)(x)}i\nabla\psi_k(x)\bigg|^2dx\\
  \notag=
  \int_{\R^2\setminus D_{R}}\bigg|(i\nabla+(A_{\mathbf p}\circ
  \mathcal R_2)M)
  (-\Psi_{\mathbf p}\circ\mathcal R_2)(y)- e^{-\frac
    i2\big(\theta_{\mathcal R_2(\mathbf
      p)}-\theta_0^{\mathcal R_2(\mathbf
      p)}+\theta_0\big) (y)}i\nabla\psi_k(y)\bigg|^2dy\\
  \notag=
  \int_{\R^2\setminus D_{R}}\bigg|(i\nabla-A_{\mathcal R_2({\mathbf p})})
  (-\Psi_{\mathbf p}\circ\mathcal R_2)(y)-
e^{-\frac
    i2\big(\theta_{\mathcal R_2(\mathbf
      p)}-\theta_0^{\mathcal R_2(\mathbf
      p)}+\theta_0\big) (y)}i\nabla\psi_k(y)\bigg|^2dy\\
  \notag=
  \int_{\R^2\setminus D_{R}}\bigg|e^{i\big(\theta_{\mathcal R_2(\mathbf
      p)}-\theta_0^{\mathcal R_2(\mathbf
      p)}+\theta_0\big)}(i\nabla-A_{\mathcal R_2({\mathbf p})})
  (-\Psi_{\mathbf p}\circ\mathcal R_2)-
e^{\frac
    i2\big(\theta_{\mathcal R_2(\mathbf
      p)}-\theta_0^{\mathcal R_2(\mathbf
      p)}+\theta_0\big) }i\nabla\psi_k\bigg|^2dy\\
  \notag=
  \int_{\R^2\setminus D_{R}}\bigg|e^{i\theta_{\mathcal R_2(\mathbf
      p)}}(i\nabla-A_{\mathcal R_2({\mathbf p})})
  (-\Psi_{\mathbf p}\circ\mathcal R_2)-
e^{\frac
    i2\big(\theta_{\mathcal R_2(\mathbf
      p)}-\theta_0^{\mathcal R_2(\mathbf
      p)}+\theta_0\big) }i\nabla\psi_k\bigg|^2dy\\
  \notag=
  \int_{\R^2\setminus D_{R}}\bigg|(i\nabla+A_{\mathcal R_2({\mathbf p})})
\widehat\Psi_{\mathbf p}-
e^{\frac
    i2\big(\theta_{\mathcal R_2(\mathbf
      p)}-\theta_0^{\mathcal R_2(\mathbf
      p)}+\theta_0\big) }i\nabla\psi_k\bigg|^2dy.
\end{gather}
From \eqref{eq:12tras-2}, \eqref{eq:13-2} and Proposition
\ref{prop_Psi} we conclude that 
\[
\widehat\Psi_{\mathbf p}=\Psi_{\mathcal R_2({\mathbf p})}
\]
thus proving \eqref{eq:relazionePsi2}.
\end{proof}

We are now in position to prove invariance properties of the function
${\mathbf p}\mapsto\xi_{\mathbf p}(1)$ under the transformations
\eqref{eq:trasformazioni1} and \eqref{eq:trasformazioni2}.
\begin{Lemma}\label{l:invarianzaxi}
Let $\mathcal R_1,\mathcal R_2$ be the transformations introduced in
 \eqref{eq:trasformazioni1}-\eqref{eq:trasformazioni2},
 $\alpha\in[0,2\pi)$, and  ${\mathbf p}=(\cos\alpha,\sin\alpha)$.
 Then 
 \begin{equation}
  \xi_{\mathcal R_1^{-1}({\mathbf p})} (1) = \xi_{\mathbf p} (1)\label{eq:inva1}
\end{equation}
and
\begin{equation}
  \xi_{\mathcal R_2({\mathbf p})} (1) = \xi_{\mathbf p} (1),\label{eq:inva2}
\end{equation}
where $\xi_{\mathbf p}$ is defined in \eqref{eq:def_xi}. 
\end{Lemma}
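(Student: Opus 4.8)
The plan is to derive each of the two invariances directly from the corresponding relation between limit profiles established in Lemma \ref{l:relazionePsi}, by plugging that relation into the defining integral \eqref{eq:def_xi} of $\xi_{\mathbf p}(1)$ and performing the change of variables $x = \mathcal R_j(y)$. The key observation is that $\xi_{\mathbf p}(1)$ is, up to the phase factors $e^{-\frac i2\theta_{\mathbf p}}$ and $e^{\frac i2 \theta_0^{\mathbf p}}$ and the conjugation against $\psi_2^k$, an angular average of $\Psi_{\mathbf p}$ over $\partial D_1$; the phases are precisely designed so that $e^{-\frac i2(\theta_{\mathbf p}-\theta_0^{\mathbf p})}\Psi_{\mathbf p}$ is, near $0$, a genuine (single-valued) function up to the fixed phase $e^{\frac i2\theta_0}$, which matches the phase hidden in $\psi_2^k = \frac{e^{it/2}}{\sqrt\pi}\sin(\frac k2 t)$. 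So the integrand is built out of the combination $e^{-\frac i2(\theta_{\mathbf p}-\theta_0^{\mathbf p})}\Psi_{\mathbf p}$ tested against $\frac1{\sqrt\pi}e^{-\frac i2\theta_0}\sin(\frac k2 t)$, and I expect the transformation laws \eqref{eq:8}, \eqref{eq:3}, \eqref{eq:8-2}, \eqref{eq:3-2}, \eqref{eq:5-2} for these ingredients under $\mathcal R_1$ and $\mathcal R_2$ to conspire so that all the spurious phase factors cancel.

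For \eqref{eq:inva1}: write $\xi_{\mathcal R_1^{-1}({\mathbf p})}(1) = \int_0^{2\pi} e^{-\frac i2 \theta_{\mathcal R_1^{-1}(\mathbf p)}(\cos t,\sin t)}\Psi_{\mathcal R_1^{-1}(\mathbf p)}(\cos t,\sin t)\, e^{\frac i2 \theta_0^{\mathcal R_1^{-1}(\mathbf p)}(\cos t,\sin t)}\,\overline{\psi_2^k(t)}\,dt$ and substitute \eqref{eq:relazionePsi1}, i.e. $\Psi_{\mathcal R_1^{-1}(\mathbf p)}(x) = -e^{-i\pi/k}\Psi_{\mathbf p}(\mathcal R_1(x))$. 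Changing variables via the rotation $x = \mathcal R_1(y)$ on the circle $\partial D_1$ (a rotation by $2\pi/k$ of the angular variable), and using \eqref{eq:8} to replace $\theta_{\mathcal R_1^{-1}(\mathbf p)} - \theta_0^{\mathcal R_1^{-1}(\mathbf p)}$ by $(\theta_{\mathbf p} - \theta_0^{\mathbf p})\circ\mathcal R_1$, plus the fact that $e^{\frac i2(\theta_0\circ\mathcal R_1)}(\psi_k\circ\mathcal R_1) = -e^{i\pi/k}e^{\frac i2\theta_0}\psi_k$ from \eqref{eq:3} (which controls exactly the $e^{\frac i2\theta_0}\sin(\frac k2 t)$ piece of $\overline{\psi_2^k}$), the factor $-e^{-i\pi/k}$ from \eqref{eq:relazionePsi1} is precisely cancelled by the $-e^{i\pi/k}$ produced by the rotation acting on $\psi_k$, and one recovers $\xi_{\mathbf p}(1)$. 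For \eqref{eq:inva2} the argument is entirely analogous: substitute \eqref{eq:relazionePsi2}, $\Psi_{\mathcal R_2(\mathbf p)}(y) = -e^{i\theta_{\mathcal R_2(\mathbf p)}(y)}\Psi_{\mathbf p}(\mathcal R_2(y))$, change variables $x = \mathcal R_2(y)$ (the reflection $t\mapsto 2\pi - t$ on $\partial D_1$, which reverses orientation but also swaps the limits of integration, leaving $\int_0^{2\pi}dt$ unchanged), use \eqref{eq:8-2} to match the $\theta_{\mathcal R_2(\mathbf p)}-\theta_0^{\mathcal R_2(\mathbf p)}$ phase, use $\psi_k\circ\mathcal R_2 = \psi_k$ and $e^{\frac i2\theta_0(\mathcal R_2 y)} = -e^{-\frac i2\theta_0(y)}$ from \eqref{eq:3-2}, \eqref{eq:5-2}, together with $\sin(\frac k2(2\pi - t)) = \sin(k\pi - \frac k2 t) = -\cos(k\pi)\sin(\frac k2 t)\cdot(-1)^{?}$ — more carefully $\sin(\frac k2(2\pi-t)) = -(-1)^k\sin(\frac k2 t) = \sin(\frac k2 t)$ since $k$ is odd — so that the sine weight is preserved; collecting all the resulting $\pm$ phases (the $-1$ from \eqref{eq:relazionePsi2}, the $-1$ from \eqref{eq:5-2}, complex conjugation turning $e^{-i\theta}$ into $e^{i\theta}$ since the whole integrand gets conjugated appropriately) shows everything cancels and $\xi_{\mathcal R_2(\mathbf p)}(1) = \xi_{\mathbf p}(1)$.

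The main obstacle I anticipate is bookkeeping of the branch-cut phases rather than any conceptual difficulty: the functions $\theta_{\mathbf p}$, $\theta_0^{\mathbf p}$, $\theta_0$ have jumps along different half-lines (see \eqref{eq:diff}, \eqref{eq:8}, \eqref{eq:8-2}), and one must be careful that the identities \eqref{eq:8} and \eqref{eq:8-2} — which hold away from the segment $\{t\mathbf p : t\in[0,1]\}$, hence in particular on $\partial D_1$ except possibly at the single point $\mathbf p$ itself — are being used on a set of full measure on the unit circle, so that the integral identities are unaffected. One also has to check that the half-integer exponents $e^{\pm i t/2}$ appearing inside $\psi_2^k$ and inside the phase corrections transform consistently under the rotation by $2\pi/k$ (which shifts $t/2$ by $\pi/k$, producing the $e^{\pm i\pi/k}$ factors) and under the reflection $t\mapsto 2\pi - t$ (which sends $e^{it/2}$ to $e^{i\pi}e^{-it/2} = -e^{-it/2}$, the source of the sign in \eqref{eq:5-2}); since $k$ is odd these half-integer contributions combine cleanly with the $\sin(\frac k2 t)$ factor. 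Once these phase cancellations are checked, the identities \eqref{eq:inva1} and \eqref{eq:inva2} follow immediately.
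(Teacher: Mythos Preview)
Your proposal is correct and follows essentially the same route as the paper: substitute the transformation laws \eqref{eq:relazionePsi1}, \eqref{eq:relazionePsi2} for $\Psi_{\mathbf p}$ into the defining integral of $\xi$, change the angular variable by $t\mapsto t+\tfrac{2\pi}{k}$ (resp.\ $t\mapsto 2\pi-t$), and use \eqref{eq:8} (resp.\ \eqref{eq:8-2}) together with the transformation law of $\psi_2^k$ to cancel all phase factors. The paper packages the latter as the clean identities $\psi_2^k(s+\tfrac{2\pi}{k})=-e^{i\pi/k}\psi_2^k(s)$ and $\psi_2^k(2\pi-s)=-e^{-is}\psi_2^k(s)$, and in the reflection case also invokes \eqref{eq:diff} to dispose of the leftover factor $e^{-i(\theta_0^{\mathcal R_2(\mathbf p)}-\theta_0)}$---a step your sketch alludes to but does not make fully explicit.
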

\begin{proof}
We first notice that, from 
\eqref{eq:9}, 
\begin{equation}\label{eq:7inv}
  \psi_2^k\Big(s+\frac{2\pi}{k}\Big)=-e^{i\frac{\pi}{k}}\psi_2^k(s),\quad\text{for
  all }s\in\R,
\end{equation}
and 
\begin{equation}\label{eq:8inv}
  \psi_2^k(2\pi-s)=-e^{-is}\psi_2^k(s),\quad\text{for
  all }s\in\R.
\end{equation}
By the change of variable $t=s+\frac{2\pi}{k}$ in the integral
defining $\xi_{\mathbf p}(1)$, from \eqref{eq:7inv}, \eqref{eq:8} and
\eqref{eq:relazionePsi1} we obtain that 
\begin{align*}
  \xi_{\mathbf p} (1)&=
\int_{0}^{2\pi} e^{-\frac i2 (\theta_{\mathbf p}-\theta_0^{\mathbf p})
(\cos t,\sin t)} \Psi_{\mathbf p} (\cos t,\sin t)
 \overline{\psi_2^k(t)}\,dt\\
&=-e^{-i\frac{\pi}{k}}\int_{0}^{2\pi} e^{-\frac i2 (\theta_{\mathbf
    p}-\theta_0^{\mathbf
    p})
(\mathcal R_1(\cos s,\sin s))} \Psi_{\mathbf p} (\mathcal R_1(\cos s,\sin s))
\overline{\psi_2^k(s)}\,ds\\
&=\int_{0}^{2\pi} e^{-\frac i2 \big(\theta_{\mathcal R_1^{-1}({\mathbf
    p})}-\theta_0^{\mathcal R_1^{-1}({\mathbf
    p})}\big)(\cos s,\sin s)} \Psi_{\mathcal R_1^{-1}({\mathbf p})} (\cos s,\sin s)
 \overline{\psi_2^k(s)}\,ds\\
&= \xi_{\mathcal R_1^{-1}({\mathbf p})} (1),
\end{align*}
thus proving \eqref{eq:inva1}.

By the change of variable $t=2\pi-s$ in the integral
defining $\xi_{\mathbf p}(1)$, from \eqref{eq:8inv}, \eqref{eq:8-2},
\eqref{eq:relazionePsi2}, and \eqref{eq:diff} we obtain that 
\begin{align*}
&  \xi_{\mathbf p} (1)=
\int_{0}^{2\pi} e^{-\frac i2 (\theta_{\mathbf p}-\theta_0^{\mathbf p})
(\cos t,\sin t)} \Psi_{\mathbf p} (\cos t,\sin t)
 \overline{\psi_2^k(t)}\,dt\\
&=-\int_{0}^{2\pi} e^{-\frac i2 (\theta_{\mathbf
    p}-\theta_0^{\mathbf
    p})
(\mathcal R_2(\cos s,\sin s))} \Psi_{\mathbf p} (\mathcal R_2(\cos
s,\sin s))
e^{i\theta_0(\cos s,\sin s)}
\overline{\psi_2^k(s)}\,ds\\
&=\int_{0}^{2\pi} e^{-\frac i2 \big(\theta_0^{\mathcal R_2({\mathbf
    p})}-\theta_{\mathcal R_2({\mathbf
    p})}\big)(\cos s,\sin s)}
e^{-i\theta_{\mathcal R_2({\mathbf p})}(\cos s,\sin s)}
 \Psi_{\mathcal R_2({\mathbf p})} (\cos s,\sin s)
e^{i\theta_0(\cos s,\sin s)} 
\overline{\psi_2^k(s)}\,ds\\
&=\int_{0}^{2\pi} e^{-\frac i2 \big(
\theta_{\mathcal R_2({\mathbf
    p})}-\theta_0^{\mathcal R_2({\mathbf
    p})}\big)(\cos s,\sin s)}
e^{-i\big(\theta_0^{\mathcal R_2({\mathbf p})}-\theta_0\big)(\cos s,\sin s)}
 \Psi_{\mathcal R_2({\mathbf p})} (\cos s,\sin s)
\overline{\psi_2^k(s)}\,ds\\
&=\int_{0}^{2\pi} e^{-\frac i2 \big(
\theta_{\mathcal R_2({\mathbf
    p})}-\theta_0^{\mathcal R_2({\mathbf
    p})}\big)(\cos s,\sin s)}
 \Psi_{\mathcal R_2({\mathbf p})} (\cos s,\sin s)
\overline{\psi_2^k(s)}\,ds\\
&= \xi_{\mathcal R_2({\mathbf p})} (1),
\end{align*}
thus proving \eqref{eq:inva2}.
\end{proof}

Let $f$ be the $2\pi$-periodic extension of the function introduced in
\eqref{eq:f(alpha)}, i.e. 
\begin{equation}\label{eq:fext}
f(\alpha)=\xi_{(\cos\alpha,\sin\alpha)}(1)-\sqrt\pi,\quad\text{for all
  $\alpha\in\R$},
\end{equation}
with $\xi_{\mathbf p}$ defined in \eqref{eq:def_xi}.

\begin{Corollary}\label{cor:per}
 Let $f$ be defined in \eqref{eq:fext}. Then 
\[
f(\alpha)=f\big(\alpha-\tfrac{2\pi}{k}\big)\quad\text{and}
\quad 
f(\alpha)=f(2\pi-\alpha)
\]
for all $\alpha\in\R$.
\end{Corollary}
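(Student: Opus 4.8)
The plan is to obtain both identities as immediate consequences of the invariance relations \eqref{eq:inva1} and \eqref{eq:inva2} of Lemma \ref{l:invarianzaxi}, once one translates the geometric action of $\mathcal R_1$ and $\mathcal R_2$ on a point of the unit circle into the corresponding shift of the angular parameter $\alpha$.

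First I would note that, since $M_k$ in \eqref{eq:trasformazioni1} is the rotation by $\tfrac{2\pi}{k}$, its inverse is the rotation by $-\tfrac{2\pi}{k}$, so that for $\mathbf p=(\cos\alpha,\sin\alpha)$ one has $\mathcal R_1^{-1}(\mathbf p)=\big(\cos(\alpha-\tfrac{2\pi}{k}),\sin(\alpha-\tfrac{2\pi}{k})\big)$. Then \eqref{eq:inva1} together with the definition \eqref{eq:fext} of $f$ gives $f\big(\alpha-\tfrac{2\pi}{k}\big)=\xi_{\mathcal R_1^{-1}(\mathbf p)}(1)-\sqrt\pi=\xi_{\mathbf p}(1)-\sqrt\pi=f(\alpha)$, first for those $\alpha$ for which the relevant angular parameters stay in $[0,2\pi)$, and then for every $\alpha\in\R$ by the $2\pi$-periodicity of $f$. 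Here one uses that, since $\theta_0^{\mathbf p}-\theta_{\mathbf p}$ is unaffected by shifting the branch of the angle by $2\pi$, the quantity $\xi_{(\cos\alpha,\sin\alpha)}(1)$ in \eqref{eq:def_xi} depends on $\alpha$ only through $\alpha\bmod 2\pi$, so this extension is automatic.

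Likewise, $\mathcal R_2$ in \eqref{eq:trasformazioni2} is the reflection through the $x_1$-axis, hence $\mathcal R_2(\mathbf p)=(\cos\alpha,-\sin\alpha)=\big(\cos(2\pi-\alpha),\sin(2\pi-\alpha)\big)$, and \eqref{eq:inva2} with \eqref{eq:fext} yields $f(2\pi-\alpha)=\xi_{\mathcal R_2(\mathbf p)}(1)-\sqrt\pi=\xi_{\mathbf p}(1)-\sqrt\pi=f(\alpha)$, again extended to all of $\R$ by periodicity. Since the entire substance of the statement is already contained in Lemma \ref{l:invarianzaxi} (which in turn rests on the relations for the limit profiles in Lemma \ref{l:relazionePsi}), there is essentially no obstacle at this final stage; the only points requiring attention are the elementary angle arithmetic modulo $2\pi$ and the reduction from the range $[0,2\pi)$, on which $\xi_{\mathbf p}$ was originally introduced, to the $2\pi$-periodic extension $f$ on $\R$.
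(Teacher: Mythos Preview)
Your proof is correct and follows exactly the paper's approach: the paper's own proof simply says ``It is a straightforward consequence of Lemma \ref{l:invarianzaxi},'' and you have spelled out precisely this straightforward deduction, translating the action of $\mathcal R_1^{-1}$ and $\mathcal R_2$ on $\mathbf p=(\cos\alpha,\sin\alpha)$ into the corresponding shifts of $\alpha$ and invoking the $2\pi$-periodic extension.
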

\begin{proof}
It is a straightforward consequence of Lemma \ref{l:invarianzaxi}.
\end{proof}

\section{Proof of the main result}\label{sec:proof-main-result}

From Lemma \ref{l:taylor} and Proposition \ref{p:primopasso}, it
follows that, under assumption \eqref{eq:54}, the homogeneous
polynomial $P$ \eqref{eq:polinomio} of degree $k$ appearing in the
expansion \eqref{eq:tay} is given by
\begin{equation}\label{eq:poly}
  P(r\cos\alpha,r\sin\alpha)=r^k|\beta|^2 k\sqrt\pi f(\alpha),
  \quad r>0,\ \alpha\in\R,
\end{equation}
with $f$ as in \eqref{eq:fext}.
Furthermore, from Corollary \ref{cor:per} and \eqref{eq:poly}, we have
that the $2\pi$-periodic function 
\begin{equation}\label{eq:def_g}
g:\R\to\R,\quad
g(\alpha):=P(\cos\alpha,\sin\alpha)=
\sum_{j=0}^k c_j (\cos\alpha)^{k-j}
(\sin\alpha)^j
\end{equation}
satisfy the periodicity/symmetry conditions
\begin{equation}\label{eq:5persym}
  g(\alpha)=g\big(\alpha+\tfrac{2\pi}{k}\big)\quad\text{and}
\quad 
g(\alpha)=g(2\pi-\alpha),\quad\text{for all }\alpha\in\R.
\end{equation}
From \cite[Theorem 1.2]{AF}
we also know that 
\begin{equation}\label{eq:f(0)}
c_0=  g(0)=-4\frac{|\beta_2|^2}{\pi}{\mathfrak m}_k>0,
\end{equation}
with ${\mathfrak m}_k$ being as in \eqref{eq:Ik}--\eqref{eq:segno_mk}
and $\beta_2$ as in \eqref{eq:131}.

\begin{Lemma}\label{l:fattorizzazione}
Under the assumptions of Lemma \ref{l:taylor} and \eqref{eq:54}, let
$P$ be as in \eqref{eq:tay}-\eqref{eq:polinomio} and $g$ be defined
in \eqref{eq:def_g}. Then 
 \begin{equation*}
   g(\alpha)=\frac{c_0}{\prod_{\ell=1}^k
   \sin\big(\frac\pi{2k}(2\ell-1)\big)}
 \prod_{j=1}^k\sin\Big(\frac\pi{2k}(2j-1)-\alpha\Big),\quad\text{for all }\alpha\in\R.
 \end{equation*}
\end{Lemma}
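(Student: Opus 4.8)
The plan is to use the two symmetry conditions \eqref{eq:5persym} to collapse $g$ to a single Fourier mode, and then to match that mode against the claimed product by an elementary root-of-unity computation.

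First I would observe that, since $P$ is homogeneous of degree $k$ with $k$ odd, the function $g(\alpha)=\sum_{j=0}^k c_j(\cos\alpha)^{k-j}(\sin\alpha)^j$ is a real trigonometric polynomial whose Fourier expansion contains only the frequencies $m\in\{1,3,\dots,k\}$; so write $g(\alpha)=\sum_{m}(A_m\cos(m\alpha)+B_m\sin(m\alpha))$. The point is that the shift $\alpha\mapsto\alpha+\frac{2\pi}{k}$ acts on the mode-$m$ component by rotating the vector $(A_m,B_m)$ through the angle $\frac{2\pi m}{k}$, so the first identity in \eqref{eq:5persym} forces $(A_m,B_m)=(0,0)$ for every $m$ with $\frac{2\pi m}{k}\notin 2\pi\Z$, that is, for every $m<k$. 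This leaves $g(\alpha)=A_k\cos(k\alpha)+B_k\sin(k\alpha)$; the second identity in \eqref{eq:5persym}, namely $g(\alpha)=g(2\pi-\alpha)=g(-\alpha)$, then forces $B_k=0$, and evaluating at $\alpha=0$ identifies $A_k=g(0)=c_0$ (by the definition \eqref{eq:def_g} of $g$; see also \eqref{eq:f(0)}). Hence $g(\alpha)=c_0\cos(k\alpha)$ for all $\alpha$.

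It then remains to establish the purely trigonometric identity
\[
\prod_{j=1}^k\sin\Big(\tfrac{\pi}{2k}(2j-1)-\alpha\Big)=2^{1-k}\cos(k\alpha),\qquad\text{for all }\alpha\in\R,
\]
from which the lemma follows at once: taking $\alpha=0$ yields $\prod_{\ell=1}^k\sin\big(\tfrac{\pi}{2k}(2\ell-1)\big)=2^{1-k}$, and dividing the identity by this value gives exactly the asserted expression for $g(\alpha)=c_0\cos(k\alpha)$. To prove the identity I would set $z=e^{i\alpha}$ and $\beta_j=\tfrac{\pi}{2k}(2j-1)$, and write each factor as $\sin(\beta_j-\alpha)=-\tfrac{e^{-i\beta_j}}{2iz}\big(z^2-e^{2i\beta_j}\big)$. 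Since the points $e^{2i\beta_j}=e^{i\pi(2j-1)/k}$, $j=1,\dots,k$, are precisely the $k$ distinct $k$-th roots of $-1$, one has $\prod_{j=1}^k\big(z^2-e^{2i\beta_j}\big)=z^{2k}+1$; and since $\sum_{j=1}^k\beta_j=\tfrac{k\pi}{2}$, the scalar prefactor simplifies as $\prod_{j=1}^k(-e^{-i\beta_j})/(2i)^k=(-1)^k e^{-ik\pi/2}/(2^k e^{ik\pi/2})=2^{-k}$. Multiplying, $\prod_{j=1}^k\sin(\beta_j-\alpha)=2^{-k}z^{-k}(z^{2k}+1)=2^{-k}(z^k+z^{-k})=2^{1-k}\cos(k\alpha)$, as claimed.

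I do not expect a genuine obstacle here. The only point requiring a moment's care is verifying in the first step that the rotation by $\frac{2\pi m}{k}$ really annihilates every intermediate mode $m<k$, which is immediate since $0<m<k$ prevents $\frac{2\pi m}{k}$ from lying in $2\pi\Z$; the second step is a routine computation with roots of unity. No input from the blow-up analysis is needed beyond the already-proved relations \eqref{eq:5persym} and \eqref{eq:f(0)}.
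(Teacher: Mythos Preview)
Your proof is correct, but the route is genuinely different from the paper's. The paper argues by locating the zeros of $g$: from \eqref{eq:f(0)} and the symmetries \eqref{eq:5persym} it first shows that $g$ alternates sign at the points $j\frac{\pi}{k}$, so by the intermediate value theorem there are $k$ zeros $\theta_j\in\big((j-1)\frac{\pi}{k},\,j\frac{\pi}{k}\big)$; it then factorizes $g(\alpha)=c_0\prod_{j}\frac{\sin(\theta_j-\alpha)}{\sin\theta_j}$ via the one-variable polynomial $\widetilde P(t)=\sum_j c_j t^{k-j}$ evaluated at $t=\cot\alpha$, and finally uses the symmetries again to pin down $\theta_j=\frac{\pi}{2k}(2j-1)$. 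The identification with $c_0\cos(k\alpha)$ is obtained only afterwards, in a separate Lemma (your product identity is exactly the paper's Lemma~\ref{l:coseno}).

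Your argument bypasses the zero-hunting entirely: the Fourier decomposition together with the $\frac{2\pi}{k}$-periodicity kills all modes $m<k$ at once, and evenness kills the sine term, yielding $g(\alpha)=c_0\cos(k\alpha)$ directly. This is shorter and does not use the strict positivity of $c_0$ in \eqref{eq:f(0)} (the paper needs $c_0\neq0$ so that $\widetilde P$ has degree exactly $k$). The trade-off is that the paper's approach makes the geometric picture—$k$ zeros interlacing the nodal directions—explicit, while yours reaches the closed form more efficiently but with that picture hidden. Your computation of the product formula via roots of $-1$ is essentially the same as the paper's proof of Lemma~\ref{l:coseno}.
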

\begin{proof}
From \eqref{eq:f(0)} and \eqref{eq:5persym}, we have that 
\begin{equation}\label{eq:5}
  g\Big(j\frac{2\pi}{k}\Big)>0\quad\text{for all }j=0,1,\dots,\frac{k-1}{2}.
\end{equation}
Moreover, from \eqref{eq:def_g} and oddness of $k$, we have that
\begin{equation}\label{eq:18}
g(\alpha+\pi)=-g(\alpha),\quad\text{for all }\alpha\in\R,
\end{equation}
 and hence from \eqref{eq:5persym} we deduce that 
\begin{align}\label{eq:7}
  g\bigg(\frac\pi k+j\frac{2\pi}{k}\bigg)
&= g\Big(\pi+\big(\tfrac\pi k-\pi+j\tfrac{2\pi}{k}\big)\Big)\\
\notag&= g\Big(\pi+\tfrac{2\pi}{k}\big(j+\tfrac{1-k}2\big)\Big)=g(\pi)=-g(0)<0
\quad\text{for all }j=0,1,\dots,\frac{k-1}{2}.
\end{align}
From \eqref{eq:5} and \eqref{eq:7} 
we infer that 
 $g$ has at least $k$ distinct zeros
 $\theta_1,\theta_2,\dots,\theta_k$ in $(0,\pi)$ such that 
 \begin{equation*}
   (j-1)\frac\pi k< \theta_j<j\frac\pi k,\quad\text{for all }j=1,\dots,k.
 \end{equation*}
In view of this fact, we aim at factorizing the function $g$. 
For every $\alpha\in\R\setminus\{\ell\pi:\,\ell\in\Z\}$ we have that 
\begin{equation}\label{eq:10}
  g(\alpha)=(\sin\alpha)^k \widetilde P(\cot\alpha) 
\end{equation}
where 
\[
 \widetilde P(t)=
\sum_{j=0}^k c_j t^{k-j}.
\]
From \eqref{eq:f(0)} the $1$-variable polynomial $\widetilde P$ has
degree $k$. Furthermore, by \eqref{eq:10}, $\cot\theta_1,\dots,\cot\theta_k$
are $k$ distinct real zeroes of $\widetilde P$. Therefore from the  
 Fundamental Theorem of Algebra it follows that $\widetilde
 P(t)=c_0\prod_{j=1}^k(t-\cot\theta_j)$, and hence, in view of
 \eqref{eq:10},
 \begin{align*}
   g(\alpha)&= c_0(\sin\alpha)^k \prod_{j=1}^k (\cot \alpha -\cot\theta_j)
 = c_0 \prod_{j=1}^k (\cos \alpha -\cot\theta_j\sin\alpha)\\
 &= c_0 \prod_{j=1}^k \frac1{\sin\theta_j}(\sin\theta_j\cos \alpha -\cos\theta_j\sin\alpha)
 = c_0 \prod_{j=1}^k \frac1{\sin\theta_j}\sin(\theta_j-\alpha),
\end{align*}
 for all $\alpha\in\R\setminus\{\ell\pi:\,\ell\in\Z\}$. Then, by
 continuity, we conclude that
 \begin{equation}\label{eq:14}
   g(\alpha)=c_0 \prod_{j=1}^k
   \frac1{\sin\theta_j}\sin(\theta_j-\alpha),\quad\text{for all }\alpha\in\R.
 \end{equation}
We notice that \eqref{eq:14} implies that the values 
 $\theta_1,\theta_2,\dots,\theta_k$ are the unique  zeros of $g$ in
 the interval $(0,\pi)$. In particular, for every $j\in\{1,\dots,k\}$, 
 \begin{equation}\label{eq:15}
\theta_j \text{ is the unique zero
 of $g$ in the interval }\bigg( (j-1)\frac\pi k, j\frac\pi k\bigg).
\end{equation}
From \eqref{eq:5persym} and \eqref{eq:18} we have that 
\begin{align*}
&g\Big(\theta_1+(j-1)\frac\pi k\Big)\\
&=
\begin{cases}
  g(\theta_1),&\text{if }j\text{ is odd},\\
  -g\big(\theta_1+\pi+(j-1)\frac\pi
  k\big)=-g\big(\theta_1+(j-1+k)\frac\pi k\big)
  =-g(\theta_1),&\text{if }j\text{ is even},
\end{cases}\\
&=0,
\end{align*}
and hence, in view of \eqref{eq:15} and since $\theta_1+(j-1)\frac\pi
k\in \big( (j-1)\frac\pi k, j\frac\pi k\big)$, we have that 
\begin{equation}\label{eq:19}
\theta_j=\theta_1+(j-1)\frac\pi
k,\quad\text{for all }j=1,\dots,k.
\end{equation}
From \eqref{eq:5persym} it follows that
$g\big(-\theta_1+\frac{2\pi}{k}\big)=g(-\theta_1)=g(2\pi -\theta_1)=g(\theta_1)=0$;
therefore, since 
$-\theta_1+\frac{2\pi}{k}\in \big(\frac\pi k,\frac{2\pi}k\big)$, from 
\eqref{eq:15} and \eqref{eq:19} we can conclude that 
$-\theta_1+\frac{2\pi}{k}=\theta_2=\theta_1+\frac\pi k$ and hence
$\theta_1=\frac\pi{2k}$. Then from \eqref{eq:19} we deduce that 
\begin{equation*}
\theta_j=\frac\pi{2k}(2j-1),\quad\text{for all }j=1,\dots,k,
\end{equation*}
thus reaching the conclusion in view of \eqref{eq:14}.
\end{proof}

\begin{Lemma}\label{l:coseno}
 Let $k$ be an odd natural number. Then
\[
\prod_{j=1}^k
 \sin\Big(\frac\pi{2k}(2j-1)-\alpha\Big)=2^{1-k}\cos(k\alpha)
\]
for all $\alpha\in\R$.
\end{Lemma}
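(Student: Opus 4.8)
The plan is to reduce the stated identity, by a linear change of the angular variable, to the classical product formula
$\prod_{j=1}^{k}\sin\bigl(x+(j-1)\tfrac{\pi}{k}\bigr)=2^{1-k}\sin(kx)$, and then to prove that formula directly via complex exponentials together with the factorization of $w^{k}-c$ over $\C$. (Conceptually, the angles $\tfrac{\pi}{2k}(2j-1)$, $j=1,\dots,k$, are exactly the zeros of $\beta\mapsto\cos(k\beta)$ in $(0,\pi)$, which is why the product collapses to $\cos(k\alpha)$; the computation below makes this precise.)

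First I would set $x:=\tfrac{\pi}{2k}-\alpha$, so that $\tfrac{\pi}{2k}(2j-1)-\alpha=x+(j-1)\tfrac{\pi}{k}$ for every $j=1,\dots,k$, and observe that $kx=\tfrac{\pi}{2}-k\alpha$, whence $\sin(kx)=\cos(k\alpha)$. It then suffices to establish $\prod_{j=1}^{k}\sin\bigl(x+(j-1)\tfrac{\pi}{k}\bigr)=2^{1-k}\sin(kx)$ for all $x\in\R$ and then substitute back.

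For the core computation, write $\mu:=e^{i\pi/k}$ and factor each term as $\sin\bigl(x+(j-1)\tfrac{\pi}{k}\bigr)=\dfrac{e^{-ix}\mu^{-(j-1)}}{2i}\bigl(e^{2ix}\mu^{2(j-1)}-1\bigr)$. Taking the product over $j=1,\dots,k$ and using $\sum_{j=1}^{k}(j-1)=\tfrac{k(k-1)}{2}$ produces the scalar prefactor $\dfrac{e^{-ikx}\,e^{-i\pi(k-1)/2}}{(2i)^{k}}$ multiplied by $\prod_{j=1}^{k}\bigl(e^{2ix}\mu^{2(j-1)}-1\bigr)$. Since $\mu^{2}=e^{2\pi i/k}$ is a primitive $k$-th root of unity, the numbers $e^{2ix}\mu^{2(j-1)}$, $j=1,\dots,k$, are precisely the $k$ distinct roots of $w^{k}-e^{2ikx}$; hence $\prod_{j=1}^{k}\bigl(w-e^{2ix}\mu^{2(j-1)}\bigr)=w^{k}-e^{2ikx}$, and evaluating at $w=1$, together with the oddness of $k$, gives $\prod_{j=1}^{k}\bigl(e^{2ix}\mu^{2(j-1)}-1\bigr)=(-1)^{k}\bigl(1-e^{2ikx}\bigr)=e^{2ikx}-1$.

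Finally I would simplify: $e^{-ikx}\bigl(e^{2ikx}-1\bigr)=e^{ikx}-e^{-ikx}=2i\sin(kx)$, while $\dfrac{e^{-i\pi(k-1)/2}\cdot 2i}{(2i)^{k}}=2^{1-k}$ because $e^{-i\pi(k-1)/2}=i^{1-k}$, $(2i)^{k}=2^{k}i^{k}$, and $i^{1-k}\cdot i^{1-k}=(-1)^{1-k}=1$ for $k$ odd. Combining these gives $\prod_{j=1}^{k}\sin\bigl(x+(j-1)\tfrac{\pi}{k}\bigr)=2^{1-k}\sin(kx)$, and with $\sin(kx)=\cos(k\alpha)$ this is exactly the claim. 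The only mildly delicate point is the bookkeeping of powers of $i$ and the sign $(-1)^{k}$, which is where oddness of $k$ enters; everything else is routine manipulation of roots of unity.
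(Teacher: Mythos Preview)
Your proof is correct and follows essentially the same route as the paper: both arguments write each sine as a complex exponential, pull out a common prefactor, and collapse the remaining product via the factorization of $w^{k}-c$ over the $k$-th roots of unity, with oddness of $k$ fixing the final sign. The only cosmetic difference is that you first substitute $x=\tfrac{\pi}{2k}-\alpha$ to recast the identity as the classical formula $\prod_{j=1}^{k}\sin\bigl(x+(j-1)\tfrac{\pi}{k}\bigr)=2^{1-k}\sin(kx)$, whereas the paper computes directly with the original angles; the underlying mechanism is identical.
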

\begin{proof}
Since the complex numbers $e^{i\frac{2\pi}{k}j}$ with $j=1,2,\dots,k$
are $k$-th distinct roots of unity and $k$ is odd, we have that 
\begin{equation}\label{eq:20}
  1-z^k=\prod_{j=1}^k\left(e^{i\frac{2\pi}{k}j}-z\right),\quad\text{for
  all }z\in\C.
\end{equation}
Since 
\begin{align*}
  \sin&\Big(\frac\pi{2k}(2j-1)-\alpha\Big)=
\frac{e^{i\left(\frac\pi{2k}(2j-1)-\alpha\right)}-e^{-i\left(\frac\pi{2k}(2j-1)-\alpha\right)}}{2i}\\
&=
\frac1{2i}e^{-i\alpha}e^{-i\frac{\pi}{2k}(2j+1)}
\left(e^{i\frac{2\pi}{k}j}-e^{i(2\alpha+\frac\pi k)}\right) =
\frac1{2}e^{-i\alpha}e^{-i\frac{\pi}{2k}(2j+1+k)}
\left(e^{i\frac{2\pi}{k}j}-e^{i(2\alpha+\frac\pi k)}\right),
\end{align*}
from \eqref{eq:20} we deduce that
\begin{align*}
  \prod_{j=1}^k
 \sin\Big(\frac\pi{2k}(2j-1)-\alpha\Big)&=\frac1{2^k}e^{-ik\alpha}e^{-i\frac{\pi}{k}\sum_{j=1}^kj}
e^{-i\frac{\pi}{2}(1+k)}
\prod_{j=1}^k\left(e^{i\frac{2\pi}{k}j}-e^{i(2\alpha+\frac\pi
    k)}\right)\\
&=\frac1{2^k}e^{-ik\alpha}
e^{-i\pi(1+k)}\left(1-e^{i(2k\alpha+\pi)}\right)\\
&=\frac1{2^k}e^{-ik\alpha}
\left(1+e^{2ki\alpha}\right)=\frac1{2^k}
\left(e^{-ik\alpha}+e^{ki\alpha}\right)=2^{1-k}\cos(k\alpha)
\end{align*}
thus proving the lemma.
\end{proof}

\begin{proof}[Proof of Theorem \ref{t:main}]
From Lemmas \ref{l:fattorizzazione} and \ref{l:coseno} it follows
that, under the assumptions of Lemma \ref{l:taylor} and \eqref{eq:54},
the polynomial 
$P$ in \eqref{eq:tay}-\eqref{eq:polinomio} 
is given by
\[
  P(r\cos\alpha,r\sin\alpha)=-4\frac{|\beta_2|^2}{\pi}{\mathfrak m}_k r^k\cos(k\alpha),
\]
thus proving the conclusion in the case in which assumption
\eqref{eq:54} is satisfied. The general case $\beta_1\neq0$ can be
easily reduced to the case $\beta_1=0$ by a change of the cartesian
coordinate system $(x_1,x_2)$ in $\R^2$ which rotates the axes in such
a way that the positive $x_1$-axis is tangent to one of the $k$ nodal
lines of $\varphi_0$ ending at $0$.  If $\beta_1\neq 0$ and $\alpha_0$
is defined in \eqref{eq:alpha0}, the nodal lines of $\varphi_0$ at $0$
have tangent half-lines forming with the $x_1$-axis angles of
$\alpha_0+\frac{2\pi}k j$, $j=0,1,\dots,k-1$. If $\tilde
\varphi_0(x)=\varphi_0(R(x))$ and $\tilde
\varphi_a(x)=\varphi_a(R(x))$ with
\[
R(x_1,x_2)=
 \begin{pmatrix}
   \cos\alpha_0&   -\sin\alpha_0\\[3pt]
   \sin\alpha_0&   \cos\alpha_0
 \end{pmatrix}
 \begin{pmatrix}
   x_1\\
x_2
 \end{pmatrix},
\]
it is easy to verify that $\tilde
\varphi_0,\tilde\varphi_a$ solve problems
\begin{equation*}
   (i\nabla + A_0)^2 \tilde\varphi_0 = \lambda_0 \tilde\varphi_0,
   \quad 
 (i\nabla + A_{R^{-1}(a)})^2 \tilde\varphi_a = \lambda_a \tilde\varphi_a,
\end{equation*}
in the domain $R^{-1}(\Omega)$. Moreover 
\begin{equation*}
  r^{-k/2} \tilde\varphi_0(r(\cos t,\sin t)) \to 
  \tilde\beta_1
  \frac{e^{i\frac t2}}{\sqrt{\pi}}\cos\Big(\frac k2
  t\Big)+\tilde\beta_2 
  \frac{e^{i\frac t2}}{\sqrt{\pi}}\sin\Big(\frac k2
  t\Big) \quad \text{in }C^{1,\tau}([0,2\pi],\C)
\end{equation*}
as $r\to0^+$, where
\[
\begin{pmatrix}\tilde\beta_1\\\tilde\beta_2
\end{pmatrix}
=e^{i\frac\alpha2}
 \begin{pmatrix}
   \cos(\frac k2 \alpha_0)&   -\sin(\frac k2\alpha_0)\\[3pt]
   \sin(\frac k2 \alpha_0)&   \cos(\frac k2\alpha_0)
 \end{pmatrix}
\begin{pmatrix}\beta_1\\\beta_2
\end{pmatrix}
.\] 
From
\eqref{eq:alpha0} it follows that $\tilde \beta_1=0$ and hence
$|\tilde\beta_2|^2=|\beta_1|^2+|\beta_2|^2$.
Since we have already proved the theorem in the case $\beta_1=0$, we
know that 
\[
\frac{\lambda_0-\lambda_a}{|a|^k}\to 
-4\frac{|\tilde \beta_2|^2}{\pi}{\mathfrak m}_k
\cos(k\alpha),
 \quad \text{as $a\to0$ with $R^{-1}(a)=|a|(\cos\alpha,\sin\alpha)$,}
\]
which yields
\[
\frac{\lambda_0-\lambda_a}{|a|^k}\to 
-4\frac{|\beta_1|^2+|\beta_2|^2}{\pi}{\mathfrak m}_k
\cos(k(\theta-\alpha_0)),
 \quad \text{as $a\to0$ with $a=|a|(\cos\theta,\sin\theta)$},
\]
thus concluding the proof.
\end{proof}

\end{document}